\documentclass[oneside, reqno, 11pt, a4paper]{amsart}
\usepackage[australian]{babel}

\AtBeginDocument{
  \DeclareSymbolFont{AMSb}{U}{msb}{m}{n}
  \DeclareSymbolFontAlphabet{\mathbb}{AMSb}
}
\DeclareFontFamily{U}{mathx}{\hyphenchar\font45}
\DeclareFontShape{U}{mathx}{m}{n}{<-> mathx10}{}
\DeclareSymbolFont{mathx}{U}{mathx}{m}{n}
\DeclareMathAccent{\widebar}{0}{mathx}{"73}

\usepackage[dvipsnames]{xcolor}
\usepackage{graphicx}
\usepackage{svg}
\usepackage{tikz}
\usepackage{caption}
\usepackage{subcaption}

\usetikzlibrary{shapes.misc,matrix,fit,positioning,arrows.meta,decorations.markings,decorations.shapes,decorations.pathreplacing,calc,shapes.geometric,arrows,shapes,patterns,patterns.meta,math}
\tikzset{Matrix/.style={matrix of nodes, font=\footnotesize,text height=1pt, text depth=0.5pt, text width=8.5pt, align=center, column sep=0pt, row sep=0pt, nodes in empty cells}}
\usepackage{pgf}
\usepgflibrary{fpu}

\graphicspath{{./}}

\usepackage[a4paper, pdftex, left=2cm, top=2cm, right=2cm, bottom=2cm]{geometry}
\usepackage[final]{pdfpages}
\usepackage{changepage}

\usepackage[foot]{amsaddr}
\numberwithin{equation}{section}

\usepackage{url}
\usepackage{hyperref}
\hypersetup{
    breaklinks=true,
    bookmarksopen=true,
    pdftitle={GMG solvers for HHO}, 
    pdfauthor={Santiago Badia, Jordi Manyer}, 
    pdfsubject={}, 
    colorlinks=true,
    linkcolor=blue,
    citecolor=blue,
    filecolor=blue,
    urlcolor=blue
}

\newcommand{\sect}[1]{Sect.~\ref{#1}}

\usepackage{csquotes}
\usepackage[backend=biber, defernumbers=true, maxbibnames=5, style=numeric-comp, isbn=false, bibencoding=utf8, safeinputenc, url=false, doi=true, giveninits=true]{biblatex}
\usepackage{mathtools}
\usepackage{mathabx}
\usepackage{stmaryrd}
\usepackage{bbm}

\usepackage{float}
\usepackage{framed}
\usepackage{verbatim}
\usepackage{fancyvrb}
\usepackage{booktabs}
\usepackage{longtable}
\usepackage{colortbl}
\usepackage{multirow}
\usepackage{algorithm}
\usepackage{algpseudocode}
\usepackage{cancel}
\usepackage{accents}
\usepackage{mleftright}
\usepackage{thm-restate}

\makeatletter
\algdef{S}[IF]{IfNoThen}[1]{\algorithmicif\ #1}
\makeatother
\usepackage[inline]{enumitem}
\usepackage{siunitx}

\newtheorem{theorem}{Theorem}
\newtheorem{lemma}{Lemma}
\newtheorem{corollary}{Corollary}

\newtheorem{assumption}{Assumption}
\newtheorem{definition}{Definition}
\theoremstyle{remark}
\newtheorem{remark}{Remark}
\theoremstyle{plain}
\usepackage[breakable]{tcolorbox}
\tcbuselibrary{theorems}
\tcbuselibrary{skins}
\tcbset{
	commonstyle/.style={
		theorem style=plain,
		enhanced jigsaw,
		fonttitle=\bfseries,
		fontupper=\itshape,
		halign=justify,
		separator sign=:,
		description delimiters none,
		description font=\bfseries, 
		terminator sign={.\hspace{0.25em}},
		arc=0mm,outer arc=0mm,
		boxrule=0pt,toprule=0pt,bottomrule=0pt,leftrule=0pt,rightrule=0pt,
		titlerule=0pt,toptitle=0pt,bottomtitle=0pt,top=0pt,
		colback=white,coltitle=black,
		boxsep=0pt, bottom=0pt, left=0pt, 
	}
}
\newtcbtheorem[]{myproblem}{Problem}%
{center, commonstyle, fonttitle=\bfseries}{pb}
\newtcbtheorem[]{mydefinition}{Definition}
{center, commonstyle, fonttitle=\bfseries}{pb}
\newtcbtheorem[]{myassumption}{Assumption}
{center, commonstyle, fonttitle=\bfseries}{pb}

\usepackage{amsmath, amsfonts, amssymb, amscd, bm, mathtools}
\mathtoolsset{showonlyrefs}

\newcommand{\identity}{\mathbbm{1}}

\newcommand{\ul}[1]{\underline{#1}}
\newcommand{\ol}[1]{\overline{#1}}
\newcommand{\norm}[2]{\|#2\|_{#1}}
\newcommand{\seminorm}[2]{|#2|_{#1}}
\newcommand{\jump}[1]{\llbracket #1 \rrbracket}
\newcommand{\braket}[1]{\langle #1 \rangle}
\newcommand{\triplevert}[1]{{\left\vert\kern-0.25ex\left\vert\kern-0.25ex\left\vert #1 \right\vert\kern-0.25ex\right\vert\kern-0.25ex\right\vert}}

\newcommand{\Pk}[1]{\mathbb{P}^{#1}}
\newcommand{\IPk}[1]{\mathcal{P}^{#1}}
\newcommand{\lproj}[2]{\pi_{#1}^{#2}}
\newcommand{\rec}[2]{\mathcal{R}_{#1}}

\newcommand{\dom}{\Omega}
\newcommand{\T}{\mathcal{T}}
\newcommand{\F}{\mathcal{F}}

\newcommand{\interior}{{\rm in}}
\newcommand{\boundary}{{\rm bd}}

\newcommand{\Tl}[1]{\T_{#1}}
\newcommand{\Fl}[1]{\F_{#1}}

\newcommand{\Th}{\Tl{\ell}}
\renewcommand{\TH}{\Tl{\ell-1}}
\newcommand{\Fh}{\Fl{\ell}}
\newcommand{\FH}{\Fl{\ell-1}}

\newcommand{\Fhi}{\Fh^{\interior}}
\newcommand{\Fhb}{\Fh^{\boundary}}

\newcommand{\h}{h_{\ell}}
\renewcommand{\H}{h_{\ell-1}}
\renewcommand{\k}{k}
\newcommand{\K}{k}

\newcommand{\Ul}[1]{\ul{U}_{#1}}
\newcommand{\Uli}[1]{U_{#1}}
\newcommand{\Ulb}[1]{M_{#1}}

\newcommand{\Uh}{\Ul{\ell}}
\newcommand{\Uhi}{\Uli{\ell}}
\newcommand{\Uhb}{\Ulb{\ell}}
\newcommand{\UH}{\Ul{\ell-1}}

\newcommand{\UHb}{\Ulb{\ell-1}}

\newcommand{\uh}{\ul{u}_{\ell}}
\newcommand{\vh}{\ul{v}_{\ell}}

\newcommand{\tr}{\gamma}

\newcommand{\trH}{\tr_{\ell-1}}

\newcommand{\isc}[1]{\mathcal{U}_{#1}}
\newcommand{\dualisc}[1]{\mathcal{V}_{#1}}
\newcommand{\isch}{\isc{\ell}}
\newcommand{\iscH}{\isc{\ell-1}}
\newcommand{\dualisch}{\dualisc{\ell}}

\newcommand{\Al}[1]{A_{#1}}
\newcommand{\Ah}{\Al{\ell}}
\newcommand{\eigenmax}{\lambda_{\mathrm{max}}(\Ah)}

\newcommand{\al}[1]{a_{#1}}
\newcommand{\ah}{\al{\ell}}
\newcommand{\aH}{\al{\ell-1}}
\newcommand{\ual}[1]{\ul{a}_{#1}}
\newcommand{\uah}{\ual{\ell}}
\newcommand{\uaH}{\ual{\ell-1}}

\newcommand{\Bl}[1]{B_{#1}}
\newcommand{\Bh}{\Bl{\ell}}
\newcommand{\BH}{\Bl{\ell-1}}

\newcommand{\Il}[1]{I_{#1}}
\newcommand{\IHh}{\Il{\ell}}
\newcommand{\IhH}{\Il{\ell}'}

\newcommand{\IU}[1]{I^{U}_{#1}}
\newcommand{\IR}[1]{I^{R}_{#1}}
\newcommand{\IUh}{\IU{\ell}}
\newcommand{\IRh}{\IR{\ell}}

\newcommand{\Pav}[1]{\Pi^{\mathrm{av}}_{#1}}
\newcommand{\Pavh}{\Pav{\ell}}
\newcommand{\Sl}[1]{R_{#1}}
\newcommand{\Sh}{\Sl{\ell}}
\newcommand{\Kl}[1]{K_{#1}}
\newcommand{\Kh}{\Kl{\ell}}
\newcommand{\Dl}[1]{D_{#1}}
\newcommand{\Dh}{\Dl{\ell}}

\newcommand{\El}[1]{\mathcal{E}_{#1}}
\newcommand{\Eh}{\El{\ell}}
\newcommand{\EH}{\El{\ell-1}}

\newcommand{\Ph}{P_{\ell-1}}

\newcommand{\Atag}[1]{\textup{A\ref{#1}}}

\newcommand{\eqi}[2]{\eqref{#1}\ensuremath{_{#2}}}

\newcommand{\just}[2]{\overset{\makebox[1.8em]{\ensuremath{\scriptscriptstyle #2}}}{#1}}

\newcommand{\ulisch}{\ul{\mathcal{U}}_\ell}
\newcommand{\ulisc}[1]{\ul{\mathcal{U}}_{#1}}
\newcommand{\Gh}{\mathcal{G}_\ell}
\newcommand{\GH}{\mathcal{G}_{\ell-1}}
\newcommand{\ulGh}{\ul{\mathcal{G}}_\ell}

\newcommand{\Aref}[1]{Assumption~\ref{#1}}
\newcommand{\NO}{N_{\mathrm{c}}}

\newcommand{\assnum}{%
  \renewcommand{\theequation}{A.\arabic{assumption}}%
  \renewcommand{\theHequation}{A.\arabic{assumption}}%
  \addtocounter{equation}{-1}}
\newcommand{\asssubnum}{%
  \renewcommand{\theparentequation}{A.\arabic{assumption}}%
  \renewcommand{\theHparentequation}{A.\arabic{assumption}}}

\noeqref{eq:ass-bounded,eq:ass-smoother}

\usepackage{acronym}
\usepackage{enumitem}
\acrodef{pde}[PDE]{partial differential equation}
\acrodef{fe}[FE]{finite element}
\acrodef{fem}[FEM]{finite element method}
\acrodefplural{fem}[FEMs]{finite element methods}
\acrodef{dof}[DOF]{degree of freedom}
\acrodefplural{dof}[DOFs]{degrees of freedom}
\acrodef{lhs}[LHS]{left hand side}
\acrodef{rhs}[RHS]{right hand side}

\acrodef{hho}[HHO]{Hybrid High-Order}
\acrodef{hdg}[HDG]{Hybridizable Discontinuous Galerkin}
\acrodef{dg}[DG]{Discontinuous Galerkin}
\acrodef{vem}[VEM]{Virtual Element Method}
\acrodef{wg}[WG]{Weak Galerkin}
\acrodef{dpg}[DPG]{Discontinuous Petrov-Galerkin}

\acrodef{gmg}[GMG]{geometric multigrid}

\graphicspath{{./}}

\title[]{
Geometric Multigrid solvers for hybrid high-order methods on polytopal meshes
}
\date{\today}
\keywords{Partial Differential Equations, Hybrid High-Order, Multigrid}

\address{$^\dagger$School of Mathematics\\Monash University\\Clayton\\Victoria 3800\\Australia}
\author[S. Badia]{Santiago Badia$^{\dagger}$}
\email{santiago.badia@monash.edu}
\author[J. Manyer]{Jordi Manyer$^{\dagger}$}
\email{jordi.manyer@monash.edu}

\begin{document}

\begin{abstract}
We propose the first optimal geometric multigrid solver for hybrid high-order discretisations that can handle arbitrary polytopal agglomeration hierarchies in both two and three dimensions. The key ingredient is the use of modified skeleton spaces, which naturally accommodate non-planar interfaces arising during coarsening while reducing the number of degrees of freedom. We prove robust convergence with respect to the mesh size and the number of levels, and we validate our results numerically on a range of agglomeration-based mesh hierarchies. The approach extends naturally to other hybrid discretisations such as hybridizable discontinuous Galerkin and Weak Galerkin methods.
\end{abstract}

\maketitle

\section{Introduction}
\label{sec:introduction}

\ac{hho} methods \cite{DiPietro-Droniou-2020-HHO-Book,Cicuttin-Ern-Pignet-2021-HHO-Book,DiPietro-Ern-Lemaire-2014-HHOstablisation} are a class of numerical methods for the approximation of partial differential equations (PDEs) on general polytopal meshes. They belong to the broader family of non-conforming hybrid methods, characterized by the use of both cell-based and face-based unknowns, which also include \ac{hdg} methods \cite{Cockburn_2009} and Weak Galerkin methods \cite{Wang2014}. Compared to \ac{dg} methods \cite{diPietro2012,Cangiani2017hpVersion}, hybrid methods offer reduced global problem sizes through the elimination of cell unknowns via static condensation, while maintaining high-order accuracy and flexibility in mesh design.

Multigrid methods for nonconforming hybrid discretisations have been studied in the literature, mainly for \ac{hdg} \cite{Cockburn2013,SCHUTZ2017500,MURALIKRISHNAN2020109240,Lu2021,Lu2022,wildey2018} and \ac{hho} \cite{DiPietro2021,DiPietro2023,DiPietro2024,DiPietro2021b}, but also for \ac{wg} \cite{CHEN2015330}, \ac{dpg} \cite{ROBERTS20172018,PETRIDES202112}, or hybridised Raviart--Thomas / Douglas--Marini methods \cite{Gopalakrishnan2009}. While earlier approaches \cite{Cockburn2013,Gopalakrishnan2009,Kronbichler2018} recast skeletal functions as bulk functions to enable the use of traditional \ac{fe} multigrid solvers, more recent works have focused on the design of \ac{gmg} solvers that operate directly on skeletal functions. However, the latter are usually restricted to nested multigrid mesh hierarchies with planar faces at all levels, drastically reducing their applicability \cite{Lu2021,Lu2022,wildey2018,DiPietro2021,DiPietro2023,DiPietro2024}. Furthermore, the analysis of these methods is often restricted to conforming, nested, simplicial mesh hierarchies and relies on conforming piecewise linear spaces \cite{DiPietro2024,Lu2021,Lu2022}.

The work in \cite{DiPietro2021,DiPietro2021b} tries to circumvent this issue with the use of non-nested hierarchies, where the coarser levels are defined on geometrically coarsened meshes with planar faces. However, these algorithms rely on geometrical rediscretisations of the domain to generate a hierarchy of meshes with flat interfaces that readily permit coarsening of face unknowns. Due to the complexity of the geometrical algorithms required to generate such hierarchies, they have only been developed in 2D and never extended to 3D.

Polytopal methods are ideally suited for multilevel solvers based on element agglomeration, as they naturally avoid the complexities of non-conforming meshes and hanging nodes that arise with traditional \ac{fe} methods \cite{plewa2005adaptive}. The power of polytopal methods in enabling flexible agglomeration strategies has recently been demonstrated in the context of \ac{dg} \cite{PAN2022110775,Antonietti2020} methods and the \ac{vem} \cite{Antonietti2023,antonietti2026reducedbasismultigridscheme}, but the design of multilevel solvers that exploit agglomerated mesh hierarchies is still open. This is the objective of this work.

We design and analyse \ac{gmg} solvers for \ac{hho} methods on arbitrary polytopal meshes and on hierarchies built by agglomeration, in both 2D and 3D. To our knowledge, this is the first \ac{gmg} method for hybrid discretisations that handles such hierarchies. Agglomeration keeps every vertex produced by the coarsening, so the faces of a coarse cell are curved in the sense that they are unions of fine faces. Interfaces are therefore never rediscretised and the hierarchy is built by a purely combinatorial algorithm. Face unknowns of two consecutive levels are unrelated, so the coarse bilinear form is not the fine one restricted through the prolongation.

Two ingredients make the analysis possible. The first is the choice of interface space, enriched as in \cite{Yemm2022ANA} with the normal traces of the reconstruction space, which is what a curved face requires and which at the same time reduces the number of \acp{dof}. A similar construction is used in \ac{vem} for pixel-based agglomerations \cite{Bertoluzza2024}. The second is the inner product in which residuals are measured, which pairs the condensed cell components and adds the face residual of the condensed pair, weighted by the cell size. Its two terms are controlled by different mechanisms, the cell one by duality and the face one by the stabilisation. Since the coarse form is not inherited, we work in the framework of \cite{Duan2007AGB}, which extends \cite{Bramble1991} to that situation.

The main result is Theorem~\ref{th:convergence}: the V-cycle contracts uniformly and the variable V-cycle is a uniform preconditioner, with bounds independent of the mesh size and of the number of levels. It covers the two prolongation operators of \sect{sec:prolongation} and the patch smoothers of \sect{sec:smoother-def}. As a by-product of the analysis we establish in Theorem~\ref{th:error} an $L^2$ error estimate of order $\h^2$ for the agglomerated \ac{hho} discretisation itself, which appears to be the first such result for enriched interface spaces on agglomerated polytopal meshes. While we focus on \ac{hho} methods, the techniques developed here extend to other hybrid methods such as \ac{hdg} and Weak Galerkin methods, due to their established equivalence \cite{Cockburn.Di-Pietro}.

The paper is organised as follows. \sect{sec:setting} fixes the model problem, the mesh hierarchy, the interface spaces and the \ac{hho} discretisation on each level, and introduces the skeletal inner product. \sect{sec:hho-analysis} collects everything that concerns a single level, ending with the error estimates. \sect{sec:method} defines the V-cycle, the prolongations and the smoothers, and \sect{sec:framework} states the three hypotheses of \cite{Duan2007AGB} and reduces them to properties of these two ingredients. Those properties are proved in \sect{sec:intergrid} for the prolongations and in \sect{sec:smoother} for the smoothers. \sect{sec:main-result} assembles them into Theorem~\ref{th:convergence}, and \sect{sec:results} reports numerical experiments on agglomerated hierarchies in 2D and 3D.

\section{Setting}\label{sec:setting}

We write $a \lesssim b$ when $a \leq C b$ with $C$ never depending on the level, the mesh size or the data, and $a \simeq b$ when $a \lesssim b \lesssim a$; the mesh-regularity parameter and the polynomial degree fixed below are the only quantities $C$ may depend on. 

\subsection{Model problem}\label{sec:model}

Let $\dom \subset \mathbb{R}^d$, $d \in \{2,3\}$, be a bounded polytopal domain. Given $f \in L^2(\dom)$, the model problem is
\begin{equation}
  \label{eq:poisson}
  -\Delta u = f \quad \text{in } \dom, \qquad u = 0 \quad \text{on } \partial \dom,
\end{equation}
with weak form: find $u \in H^1_0(\dom)$ such that
\begin{equation}
  \label{eq:weak-poisson}
  (\nabla u, \nabla v)_{\dom} = (f, v)_{\dom} \quad \forall v \in H^1_0(\dom).
\end{equation}
Non-homogeneous Dirichlet conditions require only notational changes.

\begin{assumption}[Elliptic regularity]\label{ass:regularity}
  For every $f \in L^2(\dom)$ the solution $u \in H^1_0(\dom)$ of \eqref{eq:weak-poisson} with datum $f$ belongs to $H^2(\dom)$ and satisfies $\norm{H^2(\dom)}{u} \lesssim \norm{L^2(\dom)}{f}$, with a constant independent of $f$.
\end{assumption}

\noindent Assumption~\ref{ass:regularity} holds for convex $\dom$. It is used only through duality arguments.

\subsection{Mesh hierarchy}\label{sec:meshes}

Given a polytopal mesh $\Tl{L}$ of $\dom$, we build a hierarchy $\Tl{0}, \ldots, \Tl{L}$ by agglomeration: each cell $T \in \TH$ is the union of cells of $\Th$, so that $\ol{T} = \bigcup_{t \in \Th(T)} \ol{t}$ with $\Th(T) \doteq \{ t \in \Th : t \subset T \}$. Cells are open and disjoint, and $h_t \doteq \operatorname{diam}(t)$.

Agglomeration produces interfaces that are piecewise linear for $d = 2$ and piecewise planar for $d = 3$; we call them faces. Interior and boundary faces are
\begin{equation} \label{eq:faces}
  \Fhi \doteq \{ \ol{t} \cap \ol{t'} : t, t' \in \Th, \ t \neq t' \},
  \qquad
  \Fhb \doteq \{ \ol{t} \cap \partial\dom : t \in \Th \},
\end{equation}
retaining only those of positive $(d-1)$-dimensional measure, and $\Fh \doteq \Fhi \cup \Fhb$, so that $\partial t$ is covered up to a null set by the faces $\Fh(t) \doteq \{ F \in \Fh : F \subset \partial t \}$ of a cell. Applying the same construction to $\Fh$ gives the edges of the skeleton, and to these its vertices.

Every face is oriented. For $F = \ol{t} \cap \ol{t'} \in \Fhi$ we fix the unit normal $\bm{n}_F$ pointing from $t$ to $t'$, either labelling of the pair serving, and for $F \in \Fhb$ we take $\bm{n}_F$ outward of $\dom$. The jump of a function $v$, cell-wise smooth, is
\begin{equation} \label{eq:jump}
  \jump{v}|_F \doteq v|_t - v|_{t'} \quad F \in \Fhi, \qquad
  \jump{v}|_F \doteq v|_t \quad F \in \Fhb,
\end{equation}
so that $\jump{z} = 0$ on every face for $z \in H^1_0(\dom)$. Given two consecutive levels, $\Fh \cap \FH$ denotes the fine faces lying on a coarse face and $\Fh \setminus \FH$ those interior to a coarse cell.

\begin{assumption}[Mesh hierarchy]\label{ass:geometry}
  There is $\varrho > 0$, independent of $\ell$, such that for every level and every $t \in \Th$:
  \begin{subequations} \asssubnum \label{eq:geometry}
  \begin{align}
    &t \text{ is star-shaped with respect to a ball of radius } \varrho h_t, \label{eq:geometry-a} \\
    &\#\Fh(t) \lesssim 1, \qquad h_t \simeq \h \doteq \max_{t' \in \Th} h_{t'}, \qquad \H \simeq \h . \label{eq:geometry-b}
  \end{align}
  \end{subequations}
\end{assumption}

\eqi{eq:geometry-b}{1} bounds the number of faces of a cell without limiting the number of planar pieces composing a single face. The analysis rests on four inequalities on a single cell, all consequences of star-shapedness, which is what agglomeration preserves.\footnote{The corresponding results of \cite[Chapter 1]{DiPietro-Droniou-2020-HHO-Book} assume an auxiliary simplicial refinement of the mesh whose simplices are shape regular and as large as the cell containing them. Agglomerated cells admit none uniformly in $\ell$, the simplices meeting an interface being confined to its planar pieces, which are arbitrarily small relative to the cell.}

\begin{lemma}[Polytopal inequalities]\label{lem:polytopal}
  Let Assumption~\ref{ass:geometry} hold. For all $t \in \Th$, $m \geq 0$, $v \in \Pk{m}(t)$ and $w \in H^1(t)$,
  \begin{equation} \label{eq:polytopal-inequalities}
  \begin{gathered}
    \norm{\partial t}{v} \lesssim h_t^{-1/2} \norm{t}{v},
    \qquad
    \norm{t}{w} \lesssim h_t \seminorm{1,t}{w} + h_t^{1/2} \norm{\partial t}{w}, \\
    \norm{\partial t}{w}^2 \lesssim h_t^{-1} \norm{t}{w}^2 + h_t \seminorm{1,t}{w}^2,
    \qquad
    \norm{t}{\nabla v} \lesssim h_t^{-1} \norm{t}{v},
  \end{gathered}
  \end{equation}
  the first and the last with a constant depending also on $m$.
\end{lemma}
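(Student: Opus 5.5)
We reduce all four inequalities to a reference configuration by scaling, using only star-shapedness with respect to a ball $B$ of radius $\varrho h_t$ from \eqi{eq:geometry-a}{}. Place the centre $x_0$ of $B$ at the origin and set $\hat t = h_t^{-1} t$. Then $\hat t$ has diameter $1$, contains $B(0,\varrho)$, and is star-shaped with respect to it. Such a set is a Lipschitz domain whose boundary is a radial graph $\partial\hat t = \{ r(\theta)\theta : \theta \in S^{d-1}\}$ with $\varrho \le r \le 1$ and $|\nabla r| \lesssim 1$, the Lipschitz constant depending only on $\varrho$. Each inequality in \eqref{eq:polytopal-inequalities} is scale-consistent: the powers of $h_t$ are exactly those produced by $\norm{t}{\cdot} = h_t^{d/2}\norm{\hat t}{\hat\cdot}$, $\norm{\partial t}{\cdot} = h_t^{(d-1)/2}\norm{\partial\hat t}{\hat\cdot}$ and $\seminorm{1,t}{\cdot} = h_t^{d/2-1}\seminorm{1,\hat t}{\hat\cdot}$. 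It therefore suffices to prove them on $\hat t$ with constants depending only on $\varrho$, $d$ and $m$. The number of planar pieces of $\partial t$ never enters, which is precisely why we avoid simplicial submeshes.

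\textbf{Trace inequality (third).} We use the radial vector field $\bm{x}$. On the boundary of a star-shaped domain, $\bm{x}\cdot\bm{n} \ge \varrho\,|\bm x|\,/\,|\bm x| \cdot c$; more precisely, for a domain star-shaped with respect to $B(0,\varrho)$ we have $\bm{x}\cdot\bm{n} \ge \varrho$ a.e. on $\partial\hat t$. The divergence theorem applied to $w^2\bm{x}$ gives
\[
  \varrho\norm{\partial\hat t}{w}^2 \le \int_{\partial\hat t} w^2\,\bm{x}\cdot\bm{n} = \int_{\hat t} \bigl(d\,w^2 + 2w\,\nabla w\cdot\bm{x}\bigr) \lesssim \norm{\hat t}{w}^2 + \seminorm{1,\hat t}{w}^2 .
\]
Scaling back yields the third inequality.

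\textbf{Poincaré-type bound (second).} We argue by the standard compactness argument on the class of star-shaped domains. Alternatively, and preferably for uniformity, we argue directly. Write $w = (w - \bar w) + \bar w$, where $\bar w$ is the mean over $B$. The Poincaré inequality on domains star-shaped with respect to a ball, with constant depending only on $\varrho$ (e.g.\ via Bramble--Hilbert / Dupont--Scott averaged Taylor estimates), gives $\norm{\hat t}{w-\bar w} \lesssim \seminorm{1,\hat t}{w}$. For the constant part, $|\partial \hat t|$ is bounded above and below in terms of $\varrho$ thanks to the radial graph description, so
\[
  |\bar w|\,|\partial\hat t|^{1/2} \le \norm{\partial\hat t}{w} + \norm{\partial\hat t}{w-\bar w} \lesssim \norm{\partial\hat t}{w} + \seminorm{1,\hat t}{w},
\]
where the last step uses the trace inequality just proved together with the Poincaré bound. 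Combining, and using $|\hat t| \lesssim 1$, gives the second inequality.

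\textbf{Polynomial inequalities (first and fourth).} These are norm equivalences on the finite-dimensional space $\Pk{m}$. Since $B(0,\varrho) \subset \hat t \subset B(0,1)$, for $v \in \Pk{m}$ we have $\norm{B(0,1)}{v} \lesssim \norm{B(0,\varrho)}{v} \le \norm{\hat t}{v}$, by equivalence of norms on $\Pk{m}$ over two fixed balls. The inverse inequality follows from
\[
  \norm{\hat t}{\nabla v} \le \norm{B(0,1)}{\nabla v} \lesssim \norm{B(0,\varrho)}{v}.
\]
The discrete trace inequality follows from the third inequality combined with the inverse inequality. Every constant depends only on $\varrho$, $d$ and $m$.

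\textbf{Main obstacle.} The delicate point is making the Poincaré constant and $|\partial\hat t|$ uniform over all star-shaped cells with arbitrarily many faces. The bound $\bm{x}\cdot\bm{n} \ge \varrho$, together with the coarea/radial-graph description of $\partial\hat t$, handles both. I would verify this bound first, using the fact that the cone from each boundary point over $B$ lies in $\hat t$.
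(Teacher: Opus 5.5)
Your proposal is correct and follows the paper's proof: the third inequality via the divergence theorem applied to $w^2(\bm{x}-\bm{x}_0)$ using $(\bm{y}-\bm{x}_0)\cdot\bm{n} \ge \varrho h_t$, the fourth via equivalence of polynomial norms on the inner and outer balls, and the first as their combination. For the second, the paper only cites the scaled generalised Poincar\'e--Friedrichs inequality; your argument spells this out by splitting off the mean, using a uniform Poincar\'e--Wirtinger constant on star-shaped cells, and applying the trace inequality to the mean.
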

\begin{proof}
  Let $\bm{x}_0$ be the centre of the ball of \eqref{eq:geometry-a}, so that $B(\bm{x}_0, \varrho h_t) \subset t \subset B(\bm{x}_0, h_t)$ and, $t$ being star-shaped, $(\bm{y} - \bm{x}_0) \cdot \bm{n} \geq \varrho h_t$ for a.e.\ $\bm{y} \in \partial t$. The divergence theorem applied to $w^2 (\bm{x} - \bm{x}_0)$ gives \eqi{eq:polytopal-inequalities}{3}. The equivalence of norms on $\Pk{m}(B(\bm{0},1))$, scaled to $B(\bm{x}_0, h_t)$ and combined with $\norm{B(\bm{x}_0,h_t)}{v} \lesssim \norm{B(\bm{x}_0,\varrho h_t)}{v} \leq \norm{t}{v}$, gives \eqi{eq:polytopal-inequalities}{4}, and the two together give \eqi{eq:polytopal-inequalities}{1}. Finally, \eqi{eq:polytopal-inequalities}{2} is the generalised Poincar\'e--Friedrichs inequality with scaling.
\end{proof}

\subsection{Polynomial and face spaces}\label{sec:spaces}

For a geometric entity $X$, $\Pk{k}(X)$ is the space of polynomials of degree at most $k$ on $X$, $\lproj{X}{k}$ the $L^2$-orthogonal projector onto it and $\lproj{X}{1,k}$ the elliptic projector \cite[Definition 1.39]{DiPietro-Droniou-2020-HHO-Book}. For a collection $\mathcal{X}_\ell$ we set $\Pk{k}(\mathcal{X}_\ell) \doteq \bigtimes_{X \in \mathcal{X}_\ell} \Pk{k}(X)$ and let the projectors act element-wise. We write $(\cdot,\cdot)_X$ and $\norm{X}{\cdot}$ for the $L^2(X)$ inner product and norm, extended element-wise to collections, and $\seminorm{1,X}{\cdot}$, $\seminorm{1,\mathcal{X}_\ell}{\cdot}$ for the $H^1$ and broken $H^1$ seminorms.

Face unknowns live in spaces $\IPk{\k}(F)$ of polynomial traces on the faces $F \in \Fh$. The analysis uses one property of that choice.

\begin{assumption}[Face spaces]\label{ass:face-spaces}
  For every $t \in \Th$ and every $F \in \Fh(t)$,
  {\assnum
  \begin{equation} \label{eq:interface-space-design}
    \Pk{0}(t)|_F \subset \IPk{\k}(F),
    \qquad
    \nabla \Pk{\k+1}(t) \cdot \bm{n}_F|_F \subset \IPk{\k}(F) .
  \end{equation}}
\end{assumption}

The standard choice $\IPk{\k}(F) = \Pk{\k}(F)$ satisfies Assumption~\ref{ass:face-spaces} but is ill-suited to agglomeration. The number of planar pieces of a face grows under coarsening, so the number of face unknowns is reduced suboptimally, and the extra unknowns carry high-frequency modes that face-block smoothers fail to damp \cite{DiPietro2021,DiPietro2023,DiPietro2024}. Coarsening the face unknowns under the assumption that coarse faces are planar \cite{DiPietro2021,DiPietro2023,DiPietro2024} is inpractical because restricts the admissible hierarchies, and the non-nested alternatives of \cite{DiPietro2021,DiPietro2021b} use prolongations that are expensive, require geometric rediscretisations and are two-dimensional.

We take instead the spaces of \cite{Yemm2022ANA},
\begin{equation}
  \label{eq:interface-space}
  \IPk{\k}(F) \doteq \Pk{0}(\dom)|_F + \nabla \Pk{\k+1}(\dom)|_F \cdot \bm{n}_F .
\end{equation}
Their dimension does not depend on the number of planar pieces of $F$, so agglomeration reduces the number of face unknowns, and for planar $F$ they reduce to $\Pk{\k}(F)$. A basis is obtained by local orthogonalisation on each face \cite{Yemm2022ANA}, which can be carried out hierarchically, the level-$\ell$ basis feeding the construction of the level-$(\ell-1)$ one.

\begin{lemma}[The spaces \eqref{eq:interface-space} are admissible]\label{lem:design-property}
  The spaces \eqref{eq:interface-space} satisfy Assumption~\ref{ass:face-spaces}.
\end{lemma}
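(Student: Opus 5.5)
The proof is a direct check of the two inclusions in \eqref{eq:interface-space-design}. Both follow from the observation that restricting a polynomial from $\dom$ to a cell $t$ changes nothing. Every polynomial on $t$ is the restriction of a polynomial on $\dom$ (indeed on $\mathbb{R}^d$), with the same formula and the same degree. Hence $\Pk{m}(t) = \Pk{m}(\dom)|_t$ for every $m \geq 0$. I will use this identity with $m = 0$ and with $m = \k+1$.

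For the first inclusion, let $c \in \Pk{0}(t)$ be a constant and $F \in \Fh(t)$. Then $c|_F$ is the trace on $F$ of the constant function $c \in \Pk{0}(\dom)$. So $c|_F \in \Pk{0}(\dom)|_F \subset \IPk{\k}(F)$, the second summand in \eqref{eq:interface-space} being taken as zero.

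For the second inclusion, let $p \in \Pk{\k+1}(t)$ and let $\tilde p \in \Pk{\k+1}(\dom)$ be its polynomial extension. Then $\nabla p = \nabla \tilde p$ on $t$. Since $F \subset \partial t$ and both fields are polynomial, hence continuous up to $\ol{t}$, their traces on $F$ coincide. This gives $\nabla p \cdot \bm{n}_F|_F = \nabla \tilde p|_F \cdot \bm{n}_F \in \nabla \Pk{\k+1}(\dom)|_F \cdot \bm{n}_F \subset \IPk{\k}(F)$.

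The only point needing care is the meaning of $\bm{n}_F$ on a curved (piecewise planar) face. It is the fixed orientation of the face from \sect{sec:meshes}: piecewise constant on the planar pieces and the same in \eqref{eq:interface-space} and \eqref{eq:interface-space-design}. With the same $\bm{n}_F$ on both sides, the argument above holds piece by piece, and the orientation sign is irrelevant because $\IPk{\k}(F)$ is a linear space. I expect no real obstacle; the step worth stating explicitly is the identification of $\nabla p$ with $\nabla\tilde p$ on $F$.
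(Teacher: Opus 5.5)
Your proof is correct and matches the paper's argument: constants on $t$ are traces of constants on $\dom$, and for the normal derivatives you extend $p \in \Pk{\k+1}(t)$ to a polynomial on $\dom$ with the same gradient on $\ol{t}$, using the same field $\bm{n}_F$ in both \eqref{eq:interface-space} and \eqref{eq:interface-space-design}. Your remark that the orientation sign does not matter, because $\IPk{\k}(F)$ is a linear space, is a useful clarification, since later proofs use the outward normal of $t$ rather than $\bm{n}_F$.
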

\begin{proof}
  The first inclusion is immediate from $\Pk{0}(t)|_F = \Pk{0}(\dom)|_F$. For the second, let $w \in \Pk{\k+1}(t)$ and let $\tilde{w} \in \Pk{\k+1}(\dom)$ be a polynomial with $\nabla w = \nabla \tilde{w}$ on $\ol{t}$. Since $\bm{n}_F$ is a single vector field on $F$, $\nabla w \cdot \bm{n}_F|_F = \nabla \tilde{w} \cdot \bm{n}_F|_F$ belongs to \eqref{eq:interface-space}.
\end{proof}

\subsection{The HHO discretisation on each level}\label{sec:hho}

We use a mixed-order formulation, the cell unknowns carrying one degree more than the face unknowns \cite{Cicuttin-Ern-Pignet-2021-HHO-Book}. Writing $\IPk{\k}(\Fh) \doteq \bigtimes_{F \in \Fh} \IPk{\k}(F)$ for the face space and
\begin{equation} \label{eq:HHO-space}
  \Uhb \doteq \{ \mu \in \IPk{\k}(\Fh) : \mu|_F = 0 \ \ \forall F \in \Fhb \}
\end{equation}
for its subspace with vanishing boundary trace, the hybrid space is $\Uh \doteq \Uhi \times \Uhb$ with $\Uhi \doteq \Pk{\k+1}(\Th)$. Interpolation of functions that do not vanish on $\partial\dom$ takes values in $\Pk{\k+1}(\Th) \times \IPk{\k}(\Fh)$, and we use $\Uhb$ only where the boundary condition is imposed. We write $\lproj{F}{\k}$ for the $L^2$-projector onto $\IPk{\k}(F)$ and $\lproj{\partial t}{\k}$, $\lproj{\Fh}{\k}$ for its face-wise counterparts on $\partial t$ and on the skeleton. The polynomial degree is taken independent of $\ell$; since the constants are not tracked in $\k$, the analysis applies unchanged to level-dependent degrees.

For $t \in \Th$ we write $\ul{u}_t = (u_t, u_{\partial t})$ for the restriction of $\uh \in \Uh$, and define the \emph{face residual}
\begin{equation} \label{def:face-residual}
  r_t(\ul{u}_t) \doteq u_{\partial t} - \lproj{\partial t}{\k} u_t \in \IPk{\k}(\Fh(t)),
\end{equation}
the mismatch between the two components on $\partial t$. The \ac{hho} norm on $\Uh$ is
\begin{equation} \label{def:hho-norm}
  \norm{\ul{1,\ell}}{\uh}^2 \doteq \sum_{t \in \Th} \norm{\ul{1,t}}{\ul{u}_t}^2, \qquad
  \norm{\ul{1,t}}{\ul{u}_t}^2 \doteq \seminorm{1,t}{u_t}^2 + h_t^{-1} \norm{\partial t}{u_{\partial t} - u_t}^2 .
\end{equation}
The local reconstruction $\rec{\ell,t}{\k+1} \ul{u}_t \in \Pk{\k+1}(t)$ is defined by
\begin{subequations} \label{eq:reconstruction}
\begin{align}
  (\nabla \rec{\ell,t}{\k+1} \ul{u}_t, \nabla w)_t &= (\nabla u_t, \nabla w)_t + (u_{\partial t} - u_t, \nabla w \cdot \bm{n})_{\partial t} \quad \forall w \in \Pk{\k+1}(t), \label{eq:reconstruction-a} \\
  (\rec{\ell,t}{\k+1} \ul{u}_t, 1)_t &= (u_t, 1)_t , \label{eq:reconstruction-b}
\end{align}
\end{subequations}
and the local bilinear form by
\begin{equation} \label{eq:local-bilinear-form}
  \ual{\ell,t}(\ul{u}_t, \ul{v}_t) \doteq (\nabla \rec{\ell,t}{\k+1} \ul{u}_t, \nabla \rec{\ell,t}{\k+1} \ul{v}_t)_t + h_t^{-1} \big( r_t(\ul{u}_t), r_t(\ul{v}_t) \big)_{\partial t},
\end{equation}
the second term being the usual stabilisation, written through \eqref{def:face-residual}. Summing over cells gives $\uah$, and the discrete problem reads: find $\uh \in \Uh$ with
\begin{equation}
  \label{eq:discrete-poisson}
  \uah(\uh, \vh) = (f, v_{\Th})_{\Th} \quad \forall \vh \in \Uh .
\end{equation}

Cell unknowns are eliminated by static condensation. For $t \in \Th$, $\lambda \in \Uhb$ and $g \in L^2(\dom)$, the operators $\isc{\ell,t}$ and $\dualisc{\ell,t}$ into $\Pk{\k+1}(t)$ are defined by
\begin{subequations} \label{eq:inverse-static-condensation}
\begin{align}
  \ual{\ell,t}((\isc{\ell,t} \lambda, 0), (w, 0)) &= - \ual{\ell,t}((0, \lambda), (w, 0)) \quad \forall w \in \Pk{\k+1}(t), \label{eq:inverse-static-condensation-a} \\
  \ual{\ell,t}((\dualisc{\ell,t} g, 0), (w, 0)) &= (g,w)_t \quad \forall w \in \Pk{\k+1}(t), \label{eq:inverse-static-condensation-b}
\end{align}
\end{subequations}
with global counterparts $\isch$, $\dualisch$ obtained cell-wise; both problems are well posed by Lemma \ref{lem:local-stability}, whose proof uses only the present subsection. We write $\ulisch \lambda \doteq (\isch \lambda, \lambda) \in \Uh$ for the hybrid extension of a face function and set
\begin{equation} \label{eq:condensed-form}
  \ah(\lambda, \mu) \doteq \uah(\ulisch \lambda, \ulisch \mu),
\end{equation}
so that the condensed problem reads: find $\lambda \in \Uhb$ with
\begin{equation} \label{eq:discrete-poisson-skeleton}
  \ah(\lambda, \mu) = (f, \isch \mu)_{\Th} \quad \forall \mu \in \Uhb .
\end{equation}
We denote by $\norm{\uah}{\cdot}$ and $\norm{\ah}{\cdot}$ the energy norms of $\uah$ and $\ah$. For these and for \eqref{def:hho-norm}, a subset of the mesh appended to the subscript denotes the restriction to it.

Since the analysis compares discrete solutions with different data and on different levels, we denote by $\Gh : L^2(\dom) \to \Uhb$ the operator mapping $g$ to the solution of \eqref{eq:discrete-poisson-skeleton} with datum $g$, and write
\begin{equation} \label{eq:solution-operator}
  \ulGh g \doteq (\isch \Gh g + \dualisch g, \Gh g) \in \Uh
\end{equation}
for the corresponding element of $\Uh$. It solves \eqref{eq:discrete-poisson} with datum $g$: every $\vh \in \Uh$ splits as a bubble $(v_{\Th} - \isch v_{\Fh}, 0)$ plus $\ulisch v_{\Fh}$, and testing with the two types gives \eqref{eq:inverse-static-condensation-b} and \eqref{eq:discrete-poisson-skeleton} respectively.

The \ac{hho} interpolator and the consistency error are
\begin{equation} \label{eq:interpolator-consistency}
  \ul{\mathcal{J}}_\ell z \doteq (\lproj{\Th}{\k+1} z, \lproj{\Fh}{\k} z), \qquad
  \Eh(z; \vh) \doteq (-\Delta z, v_{\Th})_{\Th} - \uah(\ul{\mathcal{J}}_\ell z, \vh),
\end{equation}
for $z \in H^1_0(\dom) \cap H^2(\dom)$ and $\vh = (v_{\Th}, v_{\Fh}) \in \Uh$; note $\ul{\mathcal{J}}_\ell z \in \Uh$ for such $z$, since $\lproj{F}{\k} z = 0$ on boundary faces.

\subsection{The skeletal inner product}\label{sec:inner-product}

The multigrid analysis needs an inner product on $\Uhb$. Pairing only the condensed cell components, $(\isch \lambda, \isch \mu)_{\Th}$, is not definite, since $\isch$ has a nontrivial kernel on agglomerated levels. We add the face residual \eqref{def:face-residual} of the condensed pair, which is what the cell component misses, and set
\begin{equation} \label{def:skeleton-inner-product}
  \braket{\lambda,\mu}_{\ell} \doteq (\isch \lambda, \isch \mu)_{\Th} + \sum_{t \in \Th} h_t \big( r_t(\ulisch \lambda), r_t(\ulisch \mu) \big)_{\partial t},
  \qquad
  \norm{\ell}{\lambda}^2 \doteq \braket{\lambda,\lambda}_{\ell} .
\end{equation}
This is the $L^2$ product of the hybrid pairs $\ulisch \lambda$ and $\ulisch \mu$, the weight $h_t$ giving the face term the scaling of a bulk integral. Alternative choices for this inner product and their effect on the analysis will be discussed in Remark~\ref{rem:face-based}. The level operator $\Ah : \Uhb \to \Uhb$ is defined by
\begin{equation} \label{eq:level-operator}
  \ah(\lambda,\mu) = \braket{\Ah \lambda, \mu}_\ell \quad \forall \lambda, \mu \in \Uhb ,
\end{equation}
which determines $\Ah$ once \eqref{def:skeleton-inner-product} is known to be definite, as Lemma~\ref{lem:inner-product} shows.

\subsection{Notation}\label{sec:notation}

Table~\ref{tab:notation} collects the symbols used throughout, together with the equation that defines each.

\begin{table}[H]
  \centering\small
  \begin{tabular}{@{}llll@{}}
    \toprule
    Symbol & Meaning & Symbol & Meaning \\
    \midrule
    $\dom$, $d$ & domain, dimension $2,3$ & $\braket{\cdot,\cdot}_\ell$, $\norm{\ell}{\cdot}$ & skeletal inner product \eqref{def:skeleton-inner-product} \\
    $\Th$, $\ell$, $L$ & mesh on level $\ell$, levels & $\Ah$, $\eigenmax$ & level operator \eqref{eq:level-operator}, eigenvalue \eqref{eq:max-eigenvalue} \\
    $t$, $T$ & cell of $\Th$, of $\TH$ & $\Bh$, $m_\ell$ & V-cycle \eqref{eq:vcycle}, smoothing steps \\
    $\Fh$, $\Fhi$, $\Fhb$ & faces, interior, boundary \eqref{eq:faces} & $\norm{\ul{1,\ell}}{\cdot}$, $\norm{\ah}{\cdot}$ & HHO \eqref{def:hho-norm} and energy norms \\
    $\bm{n}_F$, $\jump{\cdot}$ & face normal, jump \eqref{eq:jump} & $\IHh$, $\IUh$, $\IRh$ & prolongations \eqref{eq:prolongations} \\
    $h_t$, $\h$, $\H$ & cell, level, coarse level size & $\IhH$, $\ul{I}_\ell$ & restriction \eqref{eq:restriction}, extension \eqref{eq:prolongations} \\
    $\Pk{k}(X)$, $\lproj{X}{k}$ & polynomials, $L^2$-projector & $\Pavh$, $\ul{\mathcal{W}}_\ell$ & averaging \eqref{def:Pi-av}, cell extension \eqref{def:Wh} \\
    $\lproj{X}{1,k}$ & elliptic projector & $\Ph$ & coarse projection \eqref{eq:coarse-projection} \\
    $\IPk{\k}(F)$ & face space \eqref{eq:interface-space} & $\Sh$, $\Kh$ & smoother \eqref{def:patch-smoother}, symmetrised \eqref{eq:symmetrised-smoother} \\
    $\Uh$, $\Uhb$ & hybrid, face spaces \eqref{eq:HHO-space} & $\mathcal{D}$, $N_r$, $\NO$ & patch entities \eqref{def:patch-space}, colours \\
    $r_t$ & face residual \eqref{def:face-residual} & $\omega$ & damping factor \\
    $\rec{\ell}{\k+1}$ & reconstruction \eqref{eq:reconstruction} & $f$, $u$ & datum, solution \eqref{eq:weak-poisson} \\
    $\uah$, $\ah$ & hybrid \eqref{eq:local-bilinear-form}, condensed \eqref{eq:condensed-form} forms & $\Gh$, $\ulGh$ & discrete solution operator \eqref{eq:solution-operator} \\
    $\isch$, $\dualisch$, $\ulisch$ & static condensation \eqref{eq:inverse-static-condensation} & $\ul{\mathcal{J}}_\ell$, $\Eh$ & interpolator, consistency error \eqref{eq:interpolator-consistency} \\
    \bottomrule
  \end{tabular}
  \caption{Summary of notation.}
  \label{tab:notation}
\end{table}

\section{Analysis of the HHO discretisation}\label{sec:hho-analysis}

Every result the multigrid analysis needs on a single level is collected here. Assumptions~\ref{ass:geometry} and \ref{ass:face-spaces} are assumed in the numerical analysis, and are not repeated in the statements below; the hypotheses listed there are those beyond them.

\subsection{Local stability and the skeletal norm}\label{sec:local-stability}

\begin{lemma}[Local solver stability]\label{lem:local-stability}
  For all $t \in \Th$, $v \in \Pk{\k+1}(t)$, $\lambda \in \IPk{\k}(\Fh(t))$ and $\ul{u}_t \in \Pk{\k+1}(t) \times \IPk{\k}(\Fh(t))$,
  \begin{subequations}\label{eq:local-stability}
  \begin{align}
    \seminorm{1,t}{v}^2 + h_t^{-1} \norm{\partial t}{\lambda - \lproj{\partial t}{\k} v}^2
      &\lesssim \ual{\ell,t}((v,\lambda),(v,\lambda)), \label{eq:local-coercivity} \\
    \ual{\ell,t}(\ulisc{\ell,t} \lambda, \ulisc{\ell,t} \lambda) + h_t^{-2} \norm{t}{\isc{\ell,t}\lambda}^2
      &\lesssim h_t^{-1} \norm{\partial t}{\lambda}^2, \label{eq:local-energy-bound} \\
    \ual{\ell,t}(\ul{u}_t, \ul{u}_t) &\simeq \norm{\ul{1,t}}{\ul{u}_t}^2 . \label{eq:local-continuity}
  \end{align}
  \end{subequations}
\end{lemma}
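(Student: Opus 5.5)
I would prove \eqref{eq:local-continuity} first, in the form of two one-sided bounds, and then read off \eqref{eq:local-coercivity} and \eqref{eq:local-energy-bound} from it. Write $R = \rec{\ell,t}{\k+1}\ul{u}_t$, $u_t \in \Pk{\k+1}(t)$ and $u_{\partial t} \in \IPk{\k}(\Fh(t))$.

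\emph{Upper bound in \eqref{eq:local-continuity}.} Take $w = R$ in \eqref{eq:reconstruction-a}. The trace inequality \eqi{eq:polytopal-inequalities}{1}, applied to $\nabla R \in \Pk{\k}(t)^d$, gives $\seminorm{1,t}{R}^2 \le \seminorm{1,t}{u_t}\seminorm{1,t}{R} + \norm{\partial t}{u_{\partial t}-u_t}\, h_t^{-1/2}\seminorm{1,t}{R}$, hence $\seminorm{1,t}{R} \lesssim \norm{\ul{1,t}}{\ul{u}_t}$. For the stabilisation I use that $\lproj{\partial t}{\k}$ is an $L^2$ contraction and fixes $u_{\partial t}$, so $\norm{\partial t}{r_t(\ul{u}_t)} = \norm{\partial t}{\lproj{\partial t}{\k}(u_{\partial t}-u_t)} \le \norm{\partial t}{u_{\partial t}-u_t}$.

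\emph{Lower bound in \eqref{eq:local-continuity}.} This is the step where Assumption~\ref{ass:face-spaces} enters. For $w \in \Pk{\k+1}(t)$, the second inclusion in \eqref{eq:interface-space-design} gives $\nabla w\cdot\bm{n}_F|_F \in \IPk{\k}(F)$. Therefore
\[
(u_{\partial t}-u_t,\nabla w\cdot \bm n)_{\partial t} = (r_t(\ul{u}_t),\nabla w\cdot\bm n)_{\partial t}.
\]
Taking $w = u_t$ in \eqref{eq:reconstruction-a} then yields $\seminorm{1,t}{u_t}^2 \le \seminorm{1,t}{R}\seminorm{1,t}{u_t} + \norm{\partial t}{r_t}\,h_t^{-1/2}\seminorm{1,t}{u_t}$, again by \eqi{eq:polytopal-inequalities}{1}. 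Consequently $\seminorm{1,t}{u_t}^2 \lesssim \ual{\ell,t}(\ul{u}_t,\ul{u}_t)$.

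It remains to control $h_t^{-1}\norm{\partial t}{u_{\partial t}-u_t}^2$. I split $u_{\partial t}-u_t = r_t + (\lproj{\partial t}{\k}u_t - u_t)$. The first inclusion in \eqref{eq:interface-space-design} says that $\lproj{\partial t}{\k}$ reproduces constants, so for $c = \lproj{t}{0}u_t$ we have $\norm{\partial t}{\lproj{\partial t}{\k}u_t-u_t} \le 2\norm{\partial t}{u_t-c}$. Using \eqi{eq:polytopal-inequalities}{3} together with a Poincar\'e inequality on the star-shaped cell (scaled, from \eqi{eq:polytopal-inequalities}{2} or standard), $\norm{\partial t}{u_t-c}^2 \lesssim h_t\seminorm{1,t}{u_t}^2$. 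This closes the lower bound.

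The step I expect to need the most care is exactly this use of Assumption~\ref{ass:face-spaces}. Without the inclusion of normal derivatives, the boundary term could not be reduced to $r_t$, and curved faces offer no other route.

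\emph{Proof of \eqref{eq:local-coercivity}.} This is the lower bound just obtained, applied to $\ul{u}_t = (v,\lambda)$, together with $\norm{\partial t}{\lambda-\lproj{\partial t}{\k}v} = \norm{\partial t}{r_t}$.

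\emph{Proof of \eqref{eq:local-energy-bound}.} Let $u = \isc{\ell,t}\lambda$. By \eqref{eq:inverse-static-condensation-a}, $\ulisc{\ell,t}\lambda$ is the $\ual{\ell,t}$-minimiser among pairs $(w,\lambda)$, $w \in \Pk{\k+1}(t)$. Comparing with $w = 0$ and applying the upper bound gives $\ual{\ell,t}(\ulisc{\ell,t}\lambda,\ulisc{\ell,t}\lambda) \le \ual{\ell,t}((0,\lambda),(0,\lambda)) \lesssim h_t^{-1}\norm{\partial t}{\lambda}^2$.

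For the $L^2$ term I apply \eqi{eq:polytopal-inequalities}{2} to $u$, which gives $h_t^{-2}\norm{t}{u}^2 \lesssim \seminorm{1,t}{u}^2 + h_t^{-1}\norm{\partial t}{u}^2$. Then I bound $\norm{\partial t}{u} \le \norm{\partial t}{u-\lambda} + \norm{\partial t}{\lambda}$, and control $h_t^{-1}\norm{\partial t}{u-\lambda}^2$ by the lower bound in \eqref{eq:local-continuity} combined with the energy estimate just proved.

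Well-posedness of \eqref{eq:inverse-static-condensation} follows from \eqref{eq:local-coercivity} with $\lambda = 0$. Indeed, $\ual{\ell,t}((w,0),(w,0)) = 0$ forces $\seminorm{1,t}{w} = 0$ and $\lproj{\partial t}{\k}w = 0$. Then $w$ is constant, and $w = 0$ because $\lproj{\partial t}{\k}$ reproduces constants.
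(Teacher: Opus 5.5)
Your proof is correct and uses the same ingredients as the paper's: the second inclusion of Assumption~\ref{ass:face-spaces}, to replace $u_{\partial t}-u_t$ by $r_t(\ul{u}_t)$ when paired with $\nabla w\cdot\bm{n}$; the trace inequality \eqi{eq:polytopal-inequalities}{1}; the minimisation property of $\ulisc{\ell,t}\lambda$ compared with $(0,\lambda)$; and the mean-value argument based on the first inclusion. The only difference is the order, which is cosmetic: you prove \eqref{eq:local-continuity} first and read off the other two bounds, whereas the paper proves \eqref{eq:local-coercivity} first and combines it with \eqref{eq:mean-chain} to obtain the lower bound in \eqref{eq:local-continuity} (and, minor point, the Poincar\'e--Wirtinger inequality you need is the standard one on star-shaped cells, not a consequence of \eqi{eq:polytopal-inequalities}{2}).
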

\begin{proof}
  By \eqref{eq:reconstruction-a}, $(\nabla v, \nabla w)_t = (\nabla \rec{\ell,t}{\k+1}(v,\lambda), \nabla w)_t - (\lambda - v, \nabla w \cdot \bm{n})_{\partial t}$ for all $w \in \Pk{\k+1}(t)$. Since $\nabla w \cdot \bm{n}|_F \in \IPk{\k}(F)$ by \eqref{eq:interface-space-design} and $\lambda|_F \in \IPk{\k}(F)$, the boundary term equals $(\lambda - \lproj{\partial t}{\k} v, \nabla w \cdot \bm{n})_{\partial t}$. Taking $w = v$ and a Cauchy--Schwarz inequality,
  \[
    \seminorm{1,t}{v}^2 \just{\lesssim}{\eqi{eq:polytopal-inequalities}{1}} \big( \norm{t}{\nabla \rec{\ell,t}{\k+1}(v,\lambda)} + h_t^{-1/2} \norm{\partial t}{\lambda - \lproj{\partial t}{\k} v} \big) \seminorm{1,t}{v},
  \]
  and \eqref{eq:local-coercivity} follows from the definition of $\ual{\ell,t}$ in \eqref{eq:local-bilinear-form}.

  For \eqref{eq:local-energy-bound}, \eqref{eq:inverse-static-condensation-a} states that $\ulisc{\ell,t}\lambda$ minimises $v \mapsto \ual{\ell,t}((v,\lambda),(v,\lambda))$, so testing with $v = 0$,
  \[
    \ual{\ell,t}(\ulisc{\ell,t}\lambda, \ulisc{\ell,t}\lambda) \leq \norm{t}{\nabla \rec{\ell,t}{\k+1}(0,\lambda)}^2 + h_t^{-1} \norm{\partial t}{\lambda}^2
    \lesssim h_t^{-1} \norm{\partial t}{\lambda}^2,
  \]
  since $(\nabla \rec{\ell,t}{\k+1}(0,\lambda), \nabla w)_t = (\lambda, \nabla w \cdot \bm{n})_{\partial t} \lesssim h_t^{-1/2} \norm{\partial t}{\lambda} \norm{t}{\nabla w}$ by \eqref{eq:reconstruction-a} and \eqi{eq:polytopal-inequalities}{1}. For the $L^2$ term set $v \doteq \isc{\ell,t}\lambda$. Using the two previous bounds, we get $\seminorm{1,t}{v} + h_t^{-1/2} \norm{\partial t}{\lambda - \lproj{\partial t}{\k} v} \lesssim \norm{\uah,t}{\ulisc{\ell,t}\lambda} \lesssim h_t^{-1/2} \norm{\partial t}{\lambda}$. From \eqref{eq:interface-space-design} we have that constants belong to $\IPk{\k}(F)$ and thus $(1 - \lproj{\partial t}{\k}) v = (1 - \lproj{\partial t}{\k})(v - \ol{v})$ with $\ol{v}$ the mean of $v$ on $t$. Whence, 
  \begin{equation} \label{eq:mean-chain}
    \norm{\partial t}{(1 - \lproj{\partial t}{\k}) v} \leq \norm{\partial t}{v - \ol{v}}
    \just{\lesssim}{\eqi{eq:polytopal-inequalities}{1}} h_t^{-1/2} \norm{t}{v - \ol{v}} \lesssim h_t^{1/2} \seminorm{1,t}{v}
  \end{equation}
  by Poincar\'e--Wirtinger. Therefore $\norm{\partial t}{v} \lesssim \norm{\partial t}{\lambda}$, and \eqi{eq:polytopal-inequalities}{2} gives $\norm{t}{v} \lesssim h_t \seminorm{1,t}{v} + h_t^{1/2} \norm{\partial t}{v} \lesssim h_t^{1/2} \norm{\partial t}{\lambda}$.

  For \eqref{eq:local-continuity}, \eqref{eq:reconstruction-a} and \eqi{eq:polytopal-inequalities}{1} give $\norm{t}{\nabla \rec{\ell,t}{\k+1} \ul{u}_t} \lesssim \seminorm{1,t}{u_t} + h_t^{-1/2} \norm{\partial t}{u_{\partial t} - u_t}$, and the stabilisation is bounded by $h_t^{-1} \norm{\partial t}{u_{\partial t} - u_t}^2$, since $r_t(\ul{u}_t) = \lproj{\partial t}{\k}(u_{\partial t} - u_t)$ by \eqref{def:face-residual}. The reverse bound is \eqref{eq:local-coercivity} combined with \eqref{eq:mean-chain}.
\end{proof}

\begin{lemma}[Skeletal inner product]\label{lem:inner-product}
  \eqref{def:skeleton-inner-product} is an inner product on $\Uhb$, and $\Ah$ is well defined, self-adjoint and positive definite with respect to it.
\end{lemma}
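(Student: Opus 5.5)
The form \eqref{def:skeleton-inner-product} is bilinear and symmetric because $\isch$ and $\ulisch$ are linear and $r_t$ is linear in its argument. Nonnegativity is immediate, since each term is a sum of squared $L^2$ norms with positive weights $h_t$. What remains is definiteness.

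Let $\lambda \in \Uhb$ with $\norm{\ell}{\lambda} = 0$. Then on every cell $t$ we have $\isc{\ell,t}\lambda = 0$ and $r_t(\ulisch \lambda) = 0$. By \eqref{def:face-residual} the second identity reads $\lambda|_{\partial t} - \lproj{\partial t}{\k} \isc{\ell,t}\lambda = 0$. Substituting $\isc{\ell,t}\lambda = 0$ gives $\lambda|_{\partial t} = 0$ for every $t$. Every face $F \in \Fh$ lies on the boundary of some cell, so $\lambda = 0$ on the whole skeleton.

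This argument needs no quantitative estimate. The only point worth flagging is that the residual term must be built from the condensed pair $\ulisch\lambda = (\isch\lambda,\lambda)$. Without it the form fails to be definite, because $\isch$ can have a nontrivial kernel on agglomerated levels. The condensation operator $\isch$ itself is well defined by \eqref{eq:local-coercivity}: that bound applied with $\lambda = 0$ shows that $\ual{\ell,t}((\cdot,0),(\cdot,0))$ controls $\seminorm{1,t}{v}^2 + h_t^{-1}\norm{\partial t}{\lproj{\partial t}{\k} v}^2$. Constants lie in $\IPk{\k}(F)$ by \eqref{eq:interface-space-design}, and $\lproj{\partial t}{\k}$ is $L^2$-orthogonal, so $\lproj{\partial t}{\k}$ preserves the boundary mean of $v$. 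If the form vanishes, $v$ is constant and $\lproj{\partial t}{\k} v = v = 0$, so $v = 0$. Hence \eqref{eq:inverse-static-condensation-a} is well posed.

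For $\Ah$ we use that $\Uhb$ is finite dimensional and that $\braket{\cdot,\cdot}_\ell$ is now an inner product. By the Riesz representation theorem, for each $\lambda$ the linear functional $\ah(\lambda,\cdot)$ is represented by a unique element $\Ah\lambda$, and $\lambda \mapsto \Ah\lambda$ is linear. Self-adjointness follows from the symmetry of $\ah$, which is inherited from the symmetry of $\uah$ through \eqref{eq:condensed-form}:
\[
\braket{\Ah\lambda,\mu}_\ell = \ah(\lambda,\mu) = \ah(\mu,\lambda) = \braket{\Ah\mu,\lambda}_\ell = \braket{\lambda,\Ah\mu}_\ell .
\]

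For positive definiteness, observe that $\braket{\Ah\lambda,\lambda}_\ell = \uah(\ulisch\lambda,\ulisch\lambda) \ge 0$. Suppose this quantity is zero. By \eqref{eq:local-coercivity}, on each cell we get $\seminorm{1,t}{\isch\lambda} = 0$ and $\lambda = \lproj{\partial t}{\k}\isch\lambda$. So $\isch\lambda$ is a constant $c_t$ on $t$, and $\lambda|_{\partial t} = c_t$ since constants lie in the face spaces. Single-valuedness of $\lambda$ on interior faces forces $c_t = c_{t'}$ for neighbouring cells, so the constant is the same on every cell, since the mesh of the connected domain $\dom$ is face-connected. The boundary condition in $\Uhb$ then gives $c_t = 0$ on cells touching $\partial\dom$. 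Therefore $\lambda = 0$.
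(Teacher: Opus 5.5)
Your proof is correct and follows the paper's argument essentially step for step. Definiteness comes from $\isc{\ell,t}\lambda = 0$ turning $r_t(\ulisch\lambda) = 0$ into $\lambda|_{\partial t} = 0$, and positive definiteness of $\Ah$ comes from \eqref{eq:local-coercivity} forcing cellwise constants that agree across interfaces and vanish through the boundary faces. Your check that \eqref{eq:inverse-static-condensation-a} is well posed is an addition the paper delegates to Lemma~\ref{lem:local-stability}.
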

\begin{proof}
  Symmetry and bilinearity are clear, and $\braket{\lambda,\lambda}_\ell \geq 0$. If $\braket{\lambda,\lambda}_\ell = 0$ then $\isc{\ell,t}\lambda = 0$ and $r_t(\ulisch \lambda) = 0$ on every cell, and the first identity turns the second into $\lambda|_{\partial t} = 0$. Every face belongs to some cell boundary, so $\lambda = 0$.

  For $\Ah$, symmetry is that of $\ah$. If $\ah(\lambda,\lambda) = 0$ then $\ual{\ell,t}(\ulisch\lambda,\ulisch\lambda) = 0$ on every cell, so \eqref{eq:local-coercivity} makes $\isc{\ell,t}\lambda$ constant on $t$ and equal to $\lambda$ on $\partial t$. Two cells sharing an interface carry the same constant, and the constant vanishes on any cell with a boundary face. Since $\dom$ is connected and the interfaces of \eqref{eq:faces} have positive measure, every cell is joined to such a cell by a chain of interfaces, so $\lambda = 0$.
\end{proof}

\begin{lemma}[Skeletal norm]\label{lem:skeletal-norm}
  For all $\lambda \in \Uhb$,
  \begin{subequations}\label{eq:skeletal-norm}
  \begin{alignat}{2}
    \norm{\Th}{\isch \lambda} &\leq \norm{\ell}{\lambda},
      &\qquad \norm{\ell}{\lambda} &\leq \norm{\Th}{\isch \lambda} + \h \norm{\ah}{\lambda}, \label{eq:l2-representative} \\
    \Big( \sum_{t \in \Th} h_t \norm{\partial t}{r_t(\ulisch \lambda)}^2 \Big)^{1/2} &\leq \h \norm{\ah}{\lambda},
      &\qquad \norm{\ell}{\lambda}^2 &\simeq \sum_{t \in \Th} h_t \norm{\partial t}{\lambda}^2 . \label{eq:face-bounds}
  \end{alignat}
  \end{subequations}
\end{lemma}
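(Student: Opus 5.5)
The four bounds all come straight from the definition \eqref{def:skeleton-inner-product}, which splits $\norm{\ell}{\lambda}^2$ into a cell part $\norm{\Th}{\isch\lambda}^2$ and a face part $\sum_t h_t \norm{\partial t}{r_t(\ulisch\lambda)}^2$. I would prove them in the order: the left inequality of \eqi{eq:l2-representative}{}, then the face-residual bound, then the right inequality, and last the equivalence.

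The left bound $\norm{\Th}{\isch\lambda}\le\norm{\ell}{\lambda}$ holds because the face part is nonnegative. For the first bound in \eqref{eq:face-bounds}, the definition \eqref{eq:local-bilinear-form} shows that the stabilisation term of $\ual{\ell,t}(\ulisch\lambda,\ulisch\lambda)$ is exactly $h_t^{-1}\norm{\partial t}{r_t(\ulisch\lambda)}^2$, and the reconstruction term is nonnegative. Therefore
\[
h_t\norm{\partial t}{r_t(\ulisch\lambda)}^2 \le h_t^2\,\ual{\ell,t}(\ulisch\lambda,\ulisch\lambda)\le \h^2\,\ual{\ell,t}(\ulisch\lambda,\ulisch\lambda).
\]
Summing over cells and using \eqref{eq:condensed-form} gives the bound with constant one. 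For the right bound in \eqi{eq:l2-representative}{}, I would write $\norm{\ell}{\lambda}^2$ as the sum of the two parts, apply the face-residual bound to the second, and use $\sqrt{a^2+b^2}\le a+b$.

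For the equivalence $\norm{\ell}{\lambda}^2\simeq\sum_t h_t\norm{\partial t}{\lambda}^2$, everything is local, so I would work cell by cell.

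For the upper bound, \eqref{eq:local-energy-bound} gives $\norm{t}{\isc{\ell,t}\lambda}^2\lesssim h_t\norm{\partial t}{\lambda}^2$, and also $h_t^{-1}\norm{\partial t}{r_t}^2\le\ual{\ell,t}(\ulisch\lambda,\ulisch\lambda)\lesssim h_t^{-1}\norm{\partial t}{\lambda}^2$. Multiplying the latter by $h_t^2$ controls the face term, and summing over cells finishes the upper bound.

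For the lower bound, set $v=\isc{\ell,t}\lambda$. Since $\lambda|_F\in\IPk{\k}(F)$,
\[
\lambda = r_t(\ulisch\lambda)+\lproj{\partial t}{\k}v ,
\]
so $\norm{\partial t}{\lambda}\le\norm{\partial t}{r_t}+\norm{\partial t}{v}$. The first piece is already in the norm after multiplying by $h_t$. For the second, the trace inequality \eqi{eq:polytopal-inequalities}{1} applies because $v$ is polynomial and gives $h_t\norm{\partial t}{v}^2\lesssim\norm{t}{v}^2$. Hence $h_t\norm{\partial t}{\lambda}^2\lesssim h_t\norm{\partial t}{r_t}^2+\norm{t}{v}^2$, and summing over cells gives $\sum_t h_t\norm{\partial t}{\lambda}^2\lesssim\norm{\ell}{\lambda}^2$.

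I do not expect a serious obstacle. The only care needed is that the projector $\lproj{\partial t}{\k}$ is $L^2$-stable, and that \eqi{eq:polytopal-inequalities}{1} is used for polynomials only, which holds here since $\isc{\ell,t}\lambda\in\Pk{\k+1}(t)$.
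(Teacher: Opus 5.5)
Your proof is correct and follows the paper's argument essentially step by step. You get the face-residual bound from the stabilisation term, both bounds in \eqref{eq:l2-representative} directly from the definition, and the lower bound of the equivalence from $\lambda = r_t(\ulisch\lambda) + \lproj{\partial t}{\k}\isc{\ell,t}\lambda$ together with contractivity and the trace inverse inequality. The only minor variation is in the upper bound of the equivalence: you control the face term directly by the energy part of \eqref{eq:local-energy-bound}, whereas the paper goes through contractivity of $\lproj{\partial t}{\k}$, the trace inverse inequality and the $L^2$ part of \eqref{eq:local-energy-bound}.
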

\begin{proof}
  Evaluating \eqref{eq:local-bilinear-form} at $\ulisch \lambda$, we have that
  \[
    h_t \norm{\partial t}{r_t(\ulisch \lambda)}^2 = h_t^2 \, h_t^{-1} \norm{\partial t}{r_t(\ulisch \lambda)}^2 \leq h_t^2 \, \ual{\ell,t}(\ulisch \lambda, \ulisch \lambda) = h_t^2 \norm{\ah,t}{\lambda}^2,
  \]
  and \eqi{eq:face-bounds}{1} follows on summing over $t$ and using $h_t \leq \h$. The bound in \eqi{eq:l2-representative}{1} is immediate from the definition in \eqref{def:skeleton-inner-product}. Then \eqi{eq:l2-representative}{2} follows from \eqi{eq:face-bounds}{1} applied to \eqref{def:skeleton-inner-product}.

  For \eqi{eq:face-bounds}{2}, take $v \doteq \isc{\ell,t}\lambda$ for which \eqref{eq:local-energy-bound} gives $\norm{t}{v}^2 \lesssim h_t \norm{\partial t}{\lambda}^2$. Then, using the $L^2(F)$-contractivity of $\lproj{F}{\k}$, we have
  \[
    h_t \norm{\partial t}{r_t(\ulisch \lambda)}^2 \lesssim h_t \norm{\partial t}{\lambda}^2 + h_t \norm{\partial t}{v}^2
    \just{\lesssim}{\eqi{eq:polytopal-inequalities}{1}} h_t \norm{\partial t}{\lambda}^2 + \norm{t}{v}^2
    \just{\lesssim}{\eqref{eq:local-energy-bound}} h_t \norm{\partial t}{\lambda}^2.
  \]
  The upper bound in \eqi{eq:face-bounds}{2} then follows from \eqref{def:skeleton-inner-product}. The lower bound follows from $h_t \norm{\partial t}{\lambda}^2 \lesssim h_t \norm{\partial t}{r_t(\ulisch \lambda)}^2 + \norm{t}{v}^2$, obtained in the same way, on summing over $t \in \Th$.
\end{proof}

\begin{lemma}[Spectral bound]\label{lem:spectral-bound}
  The largest eigenvalue of $\Ah$ satisfies
  \begin{equation} \label{eq:max-eigenvalue}
    \eigenmax \lesssim \h^{-2} .
  \end{equation}
\end{lemma}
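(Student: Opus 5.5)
Since $\Ah$ is self-adjoint and positive definite with respect to $\braket{\cdot,\cdot}_\ell$ (Lemma~\ref{lem:inner-product}), its largest eigenvalue is the maximum of the Rayleigh quotient,
\[
  \eigenmax = \sup_{0 \neq \lambda \in \Uhb} \frac{\ah(\lambda,\lambda)}{\norm{\ell}{\lambda}^2}.
\]
The plan is therefore to prove the inverse inequality $\norm{\ah}{\lambda}^2 \lesssim \h^{-2} \norm{\ell}{\lambda}^2$ for all $\lambda \in \Uhb$, working cell by cell and then summing.

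First, on each cell $t \in \Th$, the definition \eqref{eq:condensed-form} gives $\norm{\ah,t}{\lambda}^2 = \ual{\ell,t}(\ulisc{\ell,t}\lambda, \ulisc{\ell,t}\lambda)$. The energy bound \eqref{eq:local-energy-bound} of Lemma~\ref{lem:local-stability} then yields directly
\[
  \norm{\ah,t}{\lambda}^2 \lesssim h_t^{-1} \norm{\partial t}{\lambda}^2 = h_t^{-2}\, h_t \norm{\partial t}{\lambda}^2 .
\]
Second, Assumption~\ref{ass:geometry} gives $h_t \simeq \h$, so $h_t^{-2} \lesssim \h^{-2}$ uniformly. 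Summing over $t \in \Th$ gives
\[
  \ah(\lambda,\lambda) \lesssim \h^{-2} \sum_{t \in \Th} h_t \norm{\partial t}{\lambda}^2 .
\]
Third, the norm equivalence \eqi{eq:face-bounds}{2} of Lemma~\ref{lem:skeletal-norm} bounds the right-hand sum by $\norm{\ell}{\lambda}^2$. Taking the supremum over $\lambda$ concludes the proof.

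I do not expect any real obstacle. The only delicate point, the control of $\norm{\ell}{\cdot}$ from below by the scaled face $L^2$ norm, has already been established in Lemma~\ref{lem:skeletal-norm}. That lemma rests on \eqref{eq:local-energy-bound} together with the inverse trace inequality \eqi{eq:polytopal-inequalities}{1}. The one thing to check is that the quasi-uniformity $h_t \simeq \h$ is used, since without it only a cellwise bound with $\min_t h_t^{-2}$ would follow.
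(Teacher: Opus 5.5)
Your proof is correct and is exactly the paper's argument: localise $\ah(\lambda,\lambda)$ through \eqref{eq:condensed-form}, apply \eqref{eq:local-energy-bound} cell by cell, use $h_t \simeq \h$ from \eqi{eq:geometry-b}{2}, and conclude with the norm equivalence \eqi{eq:face-bounds}{2}; the paper leaves the Rayleigh-quotient identification implicit, and you state it. One small slip in your closing remark: without quasi-uniformity the bound would involve $\max_t h_t^{-2}$, not $\min_t h_t^{-2}$.
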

\begin{proof}
  By \eqref{eq:condensed-form}, for $\lambda \in \Uhb$,
  \[
    \ah(\lambda,\lambda) = \sum_{t \in \Th} \ual{\ell,t}(\ulisc{\ell,t}\lambda, \ulisc{\ell,t}\lambda)
    \just{\lesssim}{\eqref{eq:local-energy-bound}} \sum_{t \in \Th} h_t^{-1} \norm{\partial t}{\lambda}^2
    \just{\lesssim}{\eqi{eq:geometry-b}{2}} \h^{-2} \sum_{t \in \Th} h_t \norm{\partial t}{\lambda}^2
    \just{\simeq}{\eqi{eq:face-bounds}{2}} \h^{-2} \norm{\ell}{\lambda}^2 . \qedhere
  \]
\end{proof}

\begin{remark}[Smallest eigenvalue]\label{rem:min-eigenvalue}
  A discrete Poincar\'e--Friedrichs inequality gives $\lambda_{\mathrm{min}}(\Ah) \gtrsim 1$, so that $\Ah$ has condition number $\lesssim \h^{-2}$.
\end{remark}

\subsection{Approximation, interpolation and consistency}\label{sec:approximation}

\begin{lemma}[Polynomial approximation]\label{lem:approximation}
  Let $t \in \Th$, $l \geq 0$, $1 \leq s \leq l+1$, $0 \leq m \leq s$ and $w \in H^s(t)$, and let $\pi_t$ be either $\lproj{t}{l}$ or $\lproj{t}{1,l}$. Then
  \begin{subequations}\label{eq:approximation}
  \begin{gather}
    \seminorm{H^m(t)}{w - \pi_t w} \lesssim h_t^{s-m} \seminorm{H^s(t)}{w}, \qquad
    h_t^{1/2} \norm{\partial t}{w - \pi_t w} \lesssim h_t^{s} \seminorm{H^s(t)}{w}, \label{eq:approximation-bulk} \\
    h_t^{3/2} \norm{\partial t}{\nabla (w - \pi_t w)} \lesssim h_t^{s} \seminorm{H^s(t)}{w} . \label{eq:approximation-face}
  \end{gather}
  \end{subequations}
  \eqref{eq:approximation-face} holds for $s \geq 2$.
\end{lemma}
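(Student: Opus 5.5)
The approach is to derive everything on the star-shaped cell $t$ from a single Bramble--Hilbert estimate, so that no simplicial submesh is needed; the trace bounds then follow from the one-cell inequalities of Lemma~\ref{lem:polytopal}.

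\textbf{Bulk estimate for $\lproj{t}{l}$.} By \eqref{eq:geometry-a}, $B(\bm{x}_0,\varrho h_t) \subset t \subset B(\bm{x}_0,h_t)$. The Dupont--Scott / Brenner--Scott averaged Taylor polynomial $Q^s w$ of degree $s-1 \leq l$, taken over the ball $B(\bm{x}_0,\varrho h_t)$, satisfies
\[
  \seminorm{H^m(t)}{w - Q^s w} \lesssim h_t^{s-m}\seminorm{H^s(t)}{w}, \qquad 0 \leq m \leq s,
\]
with a constant depending only on $\varrho$, $d$ and $s$. The chunkiness parameter is bounded by $\varrho^{-1}$, and this is the only geometric input. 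For $\pi_t = \lproj{t}{l}$ the case $m=0$ follows at once, since $\lproj{t}{l}$ reproduces $Q^s w \in \Pk{l}(t)$ and is $L^2$-stable. For $m \geq 1$, write $w - \lproj{t}{l} w = (w - Q^s w) - \lproj{t}{l}(w - Q^s w)$. The second term lies in $\Pk{l}(t)$, so iterating \eqi{eq:polytopal-inequalities}{4} $m$ times and using $L^2$-stability bounds its $H^m$ seminorm by $h_t^{-m}\norm{t}{w - Q^s w} \lesssim h_t^{s-m}\seminorm{H^s(t)}{w}$.

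\textbf{Bulk estimate for $\lproj{t}{1,l}$.} The elliptic projector is $H^1$-seminorm-stable and reproduces $\Pk{l}$, which gives the case $m=1$. For $m=0$, the mean condition defining $\lproj{t}{1,l}$ makes $w - \lproj{t}{1,l} w$ have zero mean, so Poincar\'e--Wirtinger on the star-shaped cell gives the bound. The Poincar\'e--Wirtinger constant scales like $h_t$ with a constant depending on $\varrho$, by the standard result for domains star-shaped with respect to a ball. For $m \geq 2$, I will split as before and apply \eqi{eq:polytopal-inequalities}{4} to the polynomial part.

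\textbf{Trace bounds.} For \eqref{eq:approximation-bulk}$_2$, I apply \eqi{eq:polytopal-inequalities}{3} to $e \doteq w - \pi_t w$:
\[
  h_t\norm{\partial t}{e}^2 \lesssim \norm{t}{e}^2 + h_t^2\seminorm{1,t}{e}^2,
\]
and then insert the bulk estimates with $m=0,1$. For \eqref{eq:approximation-face}, which requires $s \geq 2$ so that $\nabla e \in H^1(t)$, I apply the same inequality componentwise to $\nabla e$:
\[
  h_t^3\norm{\partial t}{\nabla e}^2 \lesssim h_t^2\seminorm{1,t}{e}^2 + h_t^4\seminorm{H^2(t)}{e}^2,
\]
and conclude with the bulk estimates for $m=1,2$.

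\textbf{Main obstacle.} The delicate point is making sure the constants depend only on $\varrho$ and not on the number of planar pieces of $\partial t$. This is exactly why the argument goes through averaged Taylor polynomials on the inscribed ball and the divergence-theorem trace inequality \eqi{eq:polytopal-inequalities}{3}, rather than through a simplicial submesh, which agglomerated cells do not admit uniformly in $\ell$.
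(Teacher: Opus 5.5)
Your proposal is correct and follows essentially the paper's route. The paper obtains the bulk and boundary bounds by citing Di Pietro--Droniou (Theorems 1.45 and 1.48), which it justifies on agglomerated cells through the Dupont--Scott averaged Taylor construction on the inscribed ball, and it derives the gradient trace bound exactly as you do, applying \eqi{eq:polytopal-inequalities}{3} to $\nabla(w-\pi_t w)$ with the bulk bounds for $m=1,2$; your version is more self-contained, notably in deriving the boundary $L^2$ bound from \eqi{eq:polytopal-inequalities}{3} rather than from the book's submesh-based trace inequality. One detail to fix: for $\lproj{t}{1,l}$ with $m\geq 2$, ``split as before'' must iterate the inverse inequality starting from the $H^1$ seminorm and use $H^1$-stability, because $\lproj{t}{1,l}$ is not $L^2$-stable.
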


The bounds \eqref{eq:approximation-bulk} are \cite[Theorems 1.45 and 1.48]{DiPietro-Droniou-2020-HHO-Book}, for the $L^2$-orthogonal and for the elliptic projector respectively. For \eqref{eq:approximation-face} we apply \eqi{eq:polytopal-inequalities}{3} to $\nabla(w - \pi_t w)$, which makes use of \eqi{eq:approximation-bulk}{1} with $m = 2$, whence the restriction to $s \geq 2$. They are stated in \cite{DiPietro-Droniou-2020-HHO-Book} on regular mesh sequences, but the underlying averaged Taylor construction of \cite{DupontScott1980} requires only star-shapedness with respect to a ball, so \eqref{eq:approximation} holds under \eqref{eq:geometry-a}.

\begin{lemma}[HHO interpolation]\label{lem:interpolation}
  For $T \in \TH$ and $v \in H^1(T)$, and for $z \in H^1_0(\dom) \cap H^2(\dom)$,
  \begin{subequations}\label{eq:interpolation}
  \begin{align}
    \norm{\ul{1,\ell},T}{\ul{\mathcal{J}}_\ell v} &\lesssim \seminorm{1,T}{v}, \label{eq:interpolation-bounded} \\
    \Big( \sum_{T \in \TH} \norm{\ul{1,\ell},T}{\ul{\mathcal{J}}_\ell (z - \lproj{\TH}{1,\K+1} z)}^2 \Big)^{1/2}
      + \H^{-1/2} \norm{\FH}{\jump{\lproj{\TH}{1,\K+1} z}} &\lesssim \H \seminorm{H^2(\dom)}{z} . \label{eq:interpolation-jumps}
  \end{align}
  \end{subequations}
\end{lemma}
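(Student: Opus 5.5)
The plan is to prove \eqref{eq:interpolation-bounded} cell by cell on the fine cells $t \in \Th(T)$ and then sum. Afterwards \eqref{eq:interpolation-jumps} follows by applying \eqref{eq:interpolation-bounded} to $v = z - \lproj{T}{1,\K+1} z$ on each coarse cell and using Lemma~\ref{lem:approximation} on the coarse mesh. Lemma~\ref{lem:approximation} applies there because Assumption~\ref{ass:geometry} holds at every level, so coarse cells are star-shaped as well.

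For \eqref{eq:interpolation-bounded}, fix $t \in \Th(T)$. Since $v \in H^1(T)$, its traces on $\partial t$ are well defined from inside $t$, including on fine faces interior to $T$. By \eqref{def:hho-norm},
\[
\norm{\ul{1,t}}{\ul{\mathcal{J}}_\ell v}^2 = \seminorm{1,t}{\lproj{t}{\k+1} v}^2 + h_t^{-1} \norm{\partial t}{\lproj{\partial t}{\k} v - \lproj{t}{\k+1} v}^2 .
\]
\emph{Cell term.} Apply \eqi{eq:approximation-bulk}{1} with $s = m = 1$, $l = \k+1$. This gives $\seminorm{1,t}{v - \lproj{t}{\k+1} v} \lesssim \seminorm{1,t}{v}$, hence $H^1$-stability $\seminorm{1,t}{\lproj{t}{\k+1} v} \lesssim \seminorm{1,t}{v}$.

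\emph{Face term.} Here $\lproj{t}{\k+1} v$ does not lie in $\IPk{\k}(F)$, and we only know that constants do. So we split
\[
\lproj{\partial t}{\k} v - \lproj{t}{\k+1} v = \lproj{\partial t}{\k}\big(v - \lproj{t}{\k+1} v\big) - (1 - \lproj{\partial t}{\k}) \lproj{t}{\k+1} v .
\]
The first piece is bounded by $\norm{\partial t}{v - \lproj{t}{\k+1} v} \lesssim h_t^{1/2} \seminorm{1,t}{v}$, using $L^2(F)$-contractivity and \eqi{eq:approximation-bulk}{2} with $s = 1$.

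For the second piece, let $c$ be the mean of $v$ on $t$. Since $c|_F \in \IPk{\k}(F)$ by \eqref{eq:interface-space-design}, we have $(1 - \lproj{\partial t}{\k}) \lproj{t}{\k+1} v = (1 - \lproj{\partial t}{\k}) \lproj{t}{\k+1}(v - c)$. Following the chain \eqref{eq:mean-chain}, this is bounded in $L^2(\partial t)$ by
\[
\norm{\partial t}{\lproj{t}{\k+1}(v-c)} \lesssim h_t^{-1/2} \norm{t}{\lproj{t}{\k+1}(v-c)} \leq h_t^{-1/2} \norm{t}{v - c} \lesssim h_t^{1/2} \seminorm{1,t}{v},
\]
using \eqi{eq:polytopal-inequalities}{1}, $L^2$-stability of the projector, and Poincar\'e--Wirtinger on the star-shaped cell $t$. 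Multiplying by $h_t^{-1}$ and summing over $t \in \Th(T)$ gives \eqref{eq:interpolation-bounded}, since the cells $t \in \Th(T)$ are disjoint and their broken seminorms add up to at most $\seminorm{1,T}{v}^2$.

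For \eqref{eq:interpolation-jumps}, the first term follows from \eqref{eq:interpolation-bounded} with $v = (z - \lproj{T}{1,\K+1} z)|_T$. By linearity, $\ul{\mathcal{J}}_\ell$ restricted to $\Th(T)$ only sees $v|_T$. Then \eqi{eq:approximation-bulk}{1} on $T$ with $s = 2$, $m = 1$ gives $\seminorm{1,T}{v} \lesssim \H \seminorm{H^2(T)}{z}$, and we sum over $T$.

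For the jump term, on a coarse interior face $F = \ol{T} \cap \ol{T'}$ the continuity of the trace of $z \in H^2(\dom)$ gives
\[
\jump{\lproj{\TH}{1,\K+1} z} = (\lproj{T}{1,\K+1} z - z)|_T - (\lproj{T'}{1,\K+1} z - z)|_{T'} .
\]
On boundary faces $z = 0$, so the jump is $(\lproj{T}{1,\K+1} z - z)|_T$. Hence $\norm{\FH}{\jump{\lproj{\TH}{1,\K+1} z}}^2 \lesssim \sum_{T} \norm{\partial T}{z - \lproj{T}{1,\K+1} z}^2$, where each face is counted at most twice. This is bounded by $\lesssim \H^{3} \seminorm{H^2(\dom)}{z}^2$ by \eqi{eq:approximation-bulk}{2} with $s = 2$ and $h_T \simeq \H$. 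Multiplying the square root by $\H^{-1/2}$ gives the claim.

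The main obstacle is the face term in \eqref{eq:interpolation-bounded}. The face spaces \eqref{eq:interface-space} are not full polynomial spaces on curved faces, so $\lproj{\partial t}{\k}$ does not reproduce $\lproj{t}{\k+1} v$. The argument relies only on the inclusion of constants from Assumption~\ref{ass:face-spaces} together with the star-shaped inequalities of Lemma~\ref{lem:polytopal}, and this is what makes it insensitive to the number of planar pieces of the faces.
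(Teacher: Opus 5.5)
Your proof is correct and follows the paper's argument: the same subtraction of the cell mean, justified by $\Pk{0}(t)|_F \subset \IPk{\k}(F)$, the same summation over $t \in \Th(T)$, and the same jump bound from the continuity of $z$ together with \eqi{eq:approximation-bulk}{2} for $s=2$. The only difference is cosmetic. You subtract the mean from $\lproj{t}{\k+1} v$ and use the discrete trace inequality \eqi{eq:polytopal-inequalities}{1}. The paper subtracts it from $v$ itself and uses the continuous trace inequality \eqi{eq:polytopal-inequalities}{3} with Poincar\'e--Wirtinger.
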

\begin{proof}
  Fix $t \in \Th(T)$ and let $\ol{v}$ be the mean of $v$ on $t$. Since $\lproj{t}{\k+1} \ol{v} = \ol{v}$, the inverse inequality, the contractivity of $\lproj{t}{\k+1}$ and Poincar\'e--Wirtinger give $\seminorm{1,t}{\lproj{t}{\k+1} v} \lesssim \seminorm{1,t}{v}$. For the face term, $(1 - \lproj{\partial t}{\k}) v = (1 - \lproj{\partial t}{\k})(v - \ol{v})$ by \eqref{eq:interface-space-design}, so
  \[
    \norm{\partial t}{\lproj{\partial t}{\k} v - \lproj{t}{\k+1} v}
    \leq \norm{\partial t}{v - \ol{v}} + \norm{\partial t}{v - \lproj{t}{\k+1} v}
    \lesssim h_t^{1/2} \seminorm{1,t}{v},
  \]
  by \eqi{eq:polytopal-inequalities}{3} with Poincar\'e--Wirtinger for the first term, and \eqi{eq:approximation-bulk}{2} for the second. Summing over $t \in \Th(T)$ gives \eqref{eq:interpolation-bounded}.

  For \eqref{eq:interpolation-jumps} set $z_{\ell-1} \doteq \lproj{\TH}{1,\K+1} z$. The first term is bounded by \eqref{eq:interpolation-bounded} on each coarse cell followed by \eqi{eq:approximation-bulk}{1} with $s = 2$, $m = 1$. For the second, $z$ is continuous across interior faces and vanishes on $\partial\dom$, so $\jump{z_{\ell-1}} = \jump{z_{\ell-1} - z}$ on every face by \eqref{eq:jump}, and since each interior coarse face is shared by two cells,
  \[
    \norm{\FH}{\jump{z_{\ell-1}}}^2 \leq 2 \sum_{T \in \TH} \norm{\partial T}{z - z_{\ell-1}}^2
    \just{\lesssim}{\eqi{eq:approximation-bulk}{2}} \H^3 \seminorm{H^2(\dom)}{z}^2 .
  \]
\end{proof}

The reconstruction is exact on interpolants: for $t \in \Th$ and $w \in H^2(t)$,
\begin{equation} \label{eq:elliptic-identity}
  \rec{\ell,t}{\k+1} \ul{\mathcal{J}}_{\ell,t} w = \lproj{t}{1,\k+1} w .
\end{equation}
Indeed, integrating the bulk term of \eqref{eq:reconstruction-a} by parts and using $\Delta q \in \Pk{\k+1}(t)$ and $\nabla q \cdot \bm{n}|_F \in \IPk{\k}(F)$ from \eqi{eq:interface-space-design}{2} gives $(\nabla \rec{\ell,t}{\k+1} \ul{\mathcal{J}}_{\ell,t} w, \nabla q)_t = (\nabla w, \nabla q)_t$ for all $q \in \Pk{\k+1}(t)$, and \eqref{eq:reconstruction-b} matches the means. Both properties define $\lproj{t}{1,\k+1} w$. This is the first of the two places where Assumption~\ref{ass:face-spaces} is essential.

\begin{lemma}[Consistency]\label{lem:consistency}
  For all $z \in H^1_0(\dom) \cap H^2(\dom)$ and all $\vh \in \Uh$,
  \begin{equation} \label{eq:consistency-bound}
    |\Eh(z; \vh)| \lesssim \h \seminorm{H^2(\dom)}{z} \norm{\uah}{\vh} .
  \end{equation}
\end{lemma}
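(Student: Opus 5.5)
The proof follows the standard HHO consistency argument, adapted to curved faces through \eqref{eq:interface-space-design}. Fix $z \in H^1_0(\dom)\cap H^2(\dom)$, $\vh\in\Uh$, and set $\check z_t \doteq \lproj{t}{1,\k+1} z$ on each $t\in\Th$. By \eqref{eq:elliptic-identity}, $\rec{\ell,t}{\k+1}\ul{\mathcal{J}}_{\ell,t} z = \check z_t$. Hence
\[
  \uah(\ul{\mathcal{J}}_\ell z,\vh) = \sum_{t}(\nabla \check z_t, \nabla \rec{\ell,t}{\k+1}\ul{v}_t)_t + \sum_t h_t^{-1}\big(r_t(\ul{\mathcal{J}}_{\ell,t} z), r_t(\ul{v}_t)\big)_{\partial t}.
\]
In the first sum, take $w = \check z_t$ in \eqref{eq:reconstruction-a}, which is admissible since $\check z_t\in\Pk{\k+1}(t)$. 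This turns it into
\[
  \sum_t (\nabla \check z_t,\nabla v_t)_t + (v_{\partial t}-v_t,\nabla \check z_t\cdot\bm n)_{\partial t}.
\]

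For the volume term, integrate $(-\Delta z, v_t)_t$ by parts cell-wise to get $(\nabla z,\nabla v_t)_t - (\nabla z\cdot\bm n, v_t)_{\partial t}$. Since the face unknowns $v_{\partial t}$ are single-valued on interior faces, the normal flux of $z$ is continuous there, and $v_F=0$ on boundary faces, we have $\sum_t(\nabla z\cdot \bm n, v_{\partial t})_{\partial t}=0$. So we may add this term to obtain $(-\Delta z,v_t)_t = (\nabla z,\nabla v_t)_t + (\nabla z\cdot\bm n, v_{\partial t}-v_t)_{\partial t}$. The difference $(\nabla(z-\check z_t),\nabla v_t)_t$ vanishes by the definition of the elliptic projector. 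This gives
\[
  \Eh(z;\vh) = \sum_t \big(\nabla(z-\check z_t)\cdot\bm n, v_{\partial t}-v_t\big)_{\partial t} - \sum_t h_t^{-1}\big(r_t(\ul{\mathcal{J}}_{\ell,t} z), r_t(\ul v_t)\big)_{\partial t}.
\]

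The first sum is bounded by Cauchy--Schwarz, \eqref{eq:approximation-face} with $s=2$ (which gives $h_t^{1/2}\norm{\partial t}{\nabla(z-\check z_t)}\lesssim h_t\seminorm{H^2(t)}{z}$), and $h_t^{-1/2}\norm{\partial t}{v_{\partial t}-v_t}\le\norm{\ul{1,t}}{\ul v_t}\lesssim \norm{\uah,t}{\ul v_t}$ by \eqref{eq:local-continuity}. For the stabilisation term, use Cauchy--Schwarz together with $h_t^{-1/2}\norm{\partial t}{r_t(\ul v_t)}\le \norm{\uah,t}{\ul v_t}$. It then remains to show
\[
  h_t^{-1/2}\norm{\partial t}{r_t(\ul{\mathcal J}_{\ell,t}z)}\lesssim h_t\seminorm{H^2(t)}{z}.
\]
Write $r_t(\ul{\mathcal J}_{\ell,t} z)=\lproj{\partial t}{\k}(z-\lproj{t}{\k+1}z)$ and use the $L^2$-contractivity of $\lproj{\partial t}{\k}$ with \eqi{eq:approximation-bulk}{2} at $s=2$, which gives $\norm{\partial t}{z-\lproj{t}{\k+1}z}\lesssim h_t^{3/2}\seminorm{H^2(t)}{z}$. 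Summing over $t$ with discrete Cauchy--Schwarz and $h_t\le\h$ yields \eqref{eq:consistency-bound}.

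The main point to check is the flux-cancellation step. On agglomerated meshes the faces are unions of planar pieces. The orientation of $\bm n_F$ piecewise, the $H^2$ regularity of $z$ (so that $\nabla z\cdot\bm n$ has a well-defined trace on each planar piece), and the single-valuedness of $v_F$ all hold piece by piece, so the cancellation remains valid. Assumption~\ref{ass:face-spaces} enters only through \eqref{eq:elliptic-identity}. The projection $\lproj{\partial t}{\k}$ is never needed on the flux term, because the full difference $v_{\partial t}-v_t$ is controlled by the HHO norm via \eqref{eq:local-continuity}.
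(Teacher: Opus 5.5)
Your proof is correct and follows the paper's argument essentially step for step. You use the same cell-wise integration by parts to reach the identity \eqref{eq:consistency-identity}, then bound the flux term by Cauchy--Schwarz with \eqref{eq:approximation-face} and the stabilisation term with \eqi{eq:approximation-bulk}{2}; you cite \eqref{eq:local-continuity} where the paper cites \eqref{eq:local-coercivity} with \eqref{eq:mean-chain}, which is the same bound. One small correction to your closing remark: Assumption~\ref{ass:face-spaces} does not enter only through \eqref{eq:elliptic-identity}, because the lower bound in \eqref{eq:local-continuity} that you use also relies on both inclusions of \eqref{eq:interface-space-design}.
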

\begin{proof}
  Starting from \eqref{eq:interpolator-consistency}, we proceed similarly to \cite[Lemma 2.18]{DiPietro-Droniou-2020-HHO-Book}. Integrating by parts cell by cell and using \eqref{eq:elliptic-identity}, \eqref{eq:reconstruction-a} and \eqref{eq:local-bilinear-form},
  \begin{equation} \label{eq:consistency-identity}
    \Eh(z; \vh) = \sum_{t \in \Th} \big( \nabla (z - \lproj{t}{1,\k+1} z) \cdot \bm{n}, v_{\partial t} - v_t \big)_{\partial t} - \sum_{t \in \Th} h_t^{-1} \big( r_t(\ul{\mathcal{J}}_{\ell,t} z), r_t(\ul{v}_t) \big)_{\partial t} .
  \end{equation}
  By \eqref{eq:local-coercivity} and \eqref{eq:mean-chain}, $h_t^{-1} \norm{\partial t}{v_{\partial t} - v_t}^2 \lesssim \ual{\ell,t}(\ul{v}_t, \ul{v}_t)$, so Cauchy--Schwarz and \eqref{eq:approximation-face} bound the first sum by $\h \seminorm{H^2(\dom)}{z} \norm{\uah}{\vh}$. The second is bounded in the same way, the stabilisation being positive semidefinite and $h_t^{-1/2} \norm{\partial t}{r_t(\ul{\mathcal{J}}_{\ell,t} z)} \lesssim h_t \seminorm{H^2(t)}{z}$ by \eqi{eq:approximation-bulk}{2}.
\end{proof}

\begin{lemma}[Bubble bounds]\label{lem:bubbles}
  Let $\ul{\delta}_\ell = (\delta_\ell, 0) \in \Uh$ and $z \in H^1_0(\dom) \cap H^2(\dom)$. Then
  \begin{subequations}\label{eq:bubbles}
  \begin{align}
    \norm{\Th}{\delta_\ell} &\lesssim \h \norm{\uah}{\ul{\delta}_\ell}, \label{eq:bubble-l2} \\
    \norm{\uah}{\ul{\mathcal{J}}_\ell z - \ulisch \lproj{\Fh}{\k} z} &\lesssim \h \seminorm{H^2(\dom)}{z} . \label{eq:bubble-interp}
  \end{align}
  \end{subequations}
\end{lemma}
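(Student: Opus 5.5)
For \eqref{eq:bubble-l2} I would work cell by cell. On $t \in \Th$ the pair $(\delta_t,0)$ has face component zero, so \eqref{eq:local-coercivity} with $\lambda = 0$ gives
\[
  \seminorm{1,t}{\delta_t}^2 + h_t^{-1} \norm{\partial t}{\lproj{\partial t}{\k} \delta_t}^2 \lesssim \ual{\ell,t}(\ul{\delta}_t, \ul{\delta}_t).
\]
To reach $\norm{t}{\delta_t}$ I would use \eqi{eq:polytopal-inequalities}{2}, which needs $\norm{\partial t}{\delta_t}$ rather than its projection. I would split $\delta_t = \lproj{\partial t}{\k}\delta_t + (1-\lproj{\partial t}{\k})\delta_t$ on $\partial t$. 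The second piece is controlled by \eqref{eq:mean-chain}, which uses that constants lie in $\IPk{\k}(F)$ (Assumption~\ref{ass:face-spaces}), giving $\norm{\partial t}{(1-\lproj{\partial t}{\k})\delta_t} \lesssim h_t^{1/2}\seminorm{1,t}{\delta_t}$. Hence $h_t^{1/2}\norm{\partial t}{\delta_t} \lesssim h_t \norm{\uah,t}{\ul{\delta}_t}$. Then \eqi{eq:polytopal-inequalities}{2} yields $\norm{t}{\delta_t} \lesssim h_t \norm{\uah,t}{\ul{\delta}_t}$, and summing with $h_t \le \h$ gives the claim. This is the same chain as in the proof of \eqref{eq:local-energy-bound}, so it is routine.

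For \eqref{eq:bubble-interp} I would observe that $\ul{\mathcal{J}}_\ell z - \ulisch \lproj{\Fh}{\k} z = (\lproj{\Th}{\k+1} z - \isch \lproj{\Fh}{\k} z, 0)$ is a bubble. Call it $\ul{\delta}_\ell$. The defining relation \eqref{eq:inverse-static-condensation-a} says $\uah(\ulisch \mu, \ul{w}) = 0$ for every bubble $\ul{w} = (w,0)$, so
\[
  \norm{\uah}{\ul{\delta}_\ell}^2 = \uah(\ul{\mathcal{J}}_\ell z, \ul{\delta}_\ell).
\]
By the definition \eqref{eq:interpolator-consistency} of the consistency error,
\[
  \uah(\ul{\mathcal{J}}_\ell z, \ul{\delta}_\ell) = (-\Delta z, \delta_\ell)_{\Th} - \Eh(z;\ul{\delta}_\ell).
\]
Lemma~\ref{lem:consistency} bounds the second term by $\h\seminorm{H^2(\dom)}{z}\norm{\uah}{\ul{\delta}_\ell}$. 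The first term is at most $\norm{\dom}{\Delta z}\norm{\Th}{\delta_\ell}$, which by \eqref{eq:bubble-l2} is $\lesssim \h \seminorm{H^2(\dom)}{z} \norm{\uah}{\ul{\delta}_\ell}$. Dividing by $\norm{\uah}{\ul{\delta}_\ell}$ finishes the proof.

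The only delicate point is the Galerkin orthogonality of $\ulisch$ against bubbles. This follows directly from \eqref{eq:inverse-static-condensation-a} and bilinearity: $\uah((\isch\mu,\mu),(w,0)) = \uah((\isch\mu,0),(w,0)) + \uah((0,\mu),(w,0)) = 0$. The other point to check is that $\Eh$ may be tested with a bubble. Lemma~\ref{lem:consistency} holds for all $\vh \in \Uh$, and bubbles lie in $\Uh$, so nothing beyond earlier results is needed. I expect no real obstacle. The step needing most care is the $L^2$ bound \eqref{eq:bubble-l2}, specifically recovering $\norm{\partial t}{\delta_t}$ from its projected version via \eqref{eq:mean-chain}.
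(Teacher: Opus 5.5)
Your proposal is correct and follows the paper's proof; for \eqref{eq:bubble-interp} the argument is identical to the paper's: orthogonality of $\ulisch \lproj{\Fh}{\k} z$ to bubbles, then the definition of $\Eh$, then \eqref{eq:bubble-l2} and \eqref{eq:consistency-bound}. For \eqref{eq:bubble-l2} the paper notes that for a bubble the HHO norm already contains $h_t^{-1}\norm{\partial t}{\delta_t}^2$ and cites the lower bound of \eqref{eq:local-continuity}, whereas you re-derive that bound inline from \eqref{eq:local-coercivity} and \eqref{eq:mean-chain}, which is exactly how the paper proves \eqref{eq:local-continuity}.
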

\begin{proof}
  On each cell, $\ul{\delta}_\ell$ has zero face component, so $\norm{\ul{1,t}}{\ul{\delta}_t}^2 = \seminorm{1,t}{\delta_t}^2 + h_t^{-1} \norm{\partial t}{\delta_t}^2$ and \eqi{eq:polytopal-inequalities}{2} gives $\norm{t}{\delta_t} \lesssim h_t \norm{\ul{1,t}}{\ul{\delta}_t}$. Squaring, summing over $t$ and using \eqref{eq:local-continuity} yields \eqref{eq:bubble-l2}.

  For \eqref{eq:bubble-interp}, the difference $\ul{\delta}_\ell \doteq \ul{\mathcal{J}}_\ell z - \ulisch \lproj{\Fh}{\k} z$ has zero face component, and $\ulisch \lproj{\Fh}{\k} z$ is discrete harmonic, so $\uah(\ulisch \lproj{\Fh}{\k} z, \ul{\delta}_\ell) = 0$ by \eqref{eq:inverse-static-condensation-a}. Hence, by \eqref{eq:interpolator-consistency},
  \[
    \norm{\uah}{\ul{\delta}_\ell}^2 = \uah(\ul{\mathcal{J}}_\ell z, \ul{\delta}_\ell) = (-\Delta z, \delta_\ell)_{\Th} - \Eh(z; \ul{\delta}_\ell)
    \just{\lesssim}{\eqref{eq:bubble-l2},\,\eqref{eq:consistency-bound}} \h \seminorm{H^2(\dom)}{z} \norm{\uah}{\ul{\delta}_\ell} . \qedhere
  \]
\end{proof}

\subsection{Error estimates}\label{sec:error}

\begin{theorem}[Error estimates]\label{th:error}
  Let Assumption~\ref{ass:regularity} hold. For $g \in L^2(\dom)$ let $u_g$ solve \eqref{eq:weak-poisson} with datum $g$. Then
  \begin{equation} \label{eq:error}
    \norm{\ah}{\Gh g - \lproj{\Fh}{\k} u_g} + \h^{-1} \norm{L^2(\dom)}{\isch \Gh g - u_g} \lesssim \h \norm{L^2(\dom)}{g} .
  \end{equation}
\end{theorem}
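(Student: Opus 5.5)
The energy estimate comes first. Write $u \doteq u_g$, $\lambda \doteq \Gh g$, and work in the hybrid space with $\ulGh g$ from \eqref{eq:solution-operator}, which solves \eqref{eq:discrete-poisson} with datum $g$. Set $\ul{e}_\ell \doteq \ulGh g - \ul{\mathcal{J}}_\ell u$. Since $-\Delta u = g$, the definition \eqref{eq:interpolator-consistency} gives
\[
  \uah(\ul{e}_\ell, \vh) = (g, v_{\Th})_{\Th} - \uah(\ul{\mathcal{J}}_\ell u, \vh) = \Eh(u; \vh)
  \qquad \forall \vh \in \Uh .
\]
Taking $\vh = \ul{e}_\ell$, Lemma~\ref{lem:consistency} and Assumption~\ref{ass:regularity} yield $\norm{\uah}{\ul{e}_\ell} \lesssim \h \norm{L^2(\dom)}{g}$.

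To pass to the condensed quantity, note that $\ulGh g - \ulisch \lproj{\Fh}{\k} u$ has face component $\lambda - \lproj{\Fh}{\k} u$. Minimality of the discrete harmonic extension (testing \eqref{eq:inverse-static-condensation-a}) gives $\norm{\ah}{\mu} = \norm{\uah}{\ulisch \mu} \leq \norm{\uah}{\ul{v}}$ for any $\ul{v}$ with face component $\mu$. Hence
\[
  \norm{\ah}{\lambda - \lproj{\Fh}{\k} u} \leq \norm{\uah}{\ulGh g - \ulisch \lproj{\Fh}{\k} u} \leq \norm{\uah}{\ul{e}_\ell} + \norm{\uah}{\ul{\mathcal{J}}_\ell u - \ulisch \lproj{\Fh}{\k} u},
\]
and the last term is bounded by \eqref{eq:bubble-interp}. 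This gives the first term of \eqref{eq:error}.

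For the $L^2$ term, I split
\[
  \isch\lambda - u = (\isch \lambda - \isch \lproj{\Fh}{\k} u) + (\isch \lproj{\Fh}{\k} u - \lproj{\Th}{\k+1} u) + (\lproj{\Th}{\k+1} u - u) .
\]
\begin{itemize}
  \item The third piece is $O(\h^2 \seminorm{H^2(\dom)}{u})$ by \eqref{eq:approximation-bulk}.
  \item The second is minus the cell component of the bubble in \eqref{eq:bubble-interp}. By \eqref{eq:bubble-l2} it is $\lesssim \h \cdot \h \seminorm{H^2(\dom)}{u}$.
  \item The first is $\isch \mu$ with $\mu \doteq \lambda - \lproj{\Fh}{\k} u$. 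It needs a genuine $L^2$ bound, $\norm{\Th}{\isch \mu} \lesssim \h^2 \norm{L^2(\dom)}{g}$, which a plain energy bound does not provide.
\end{itemize}

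For the first piece I use an Aubin--Nitsche argument at the hybrid level. I would actually do duality on the full hybrid error $\ul{e}_\ell$, whose cell component $e_{\Th}$ differs from $\isch \mu$ by $\dualisch g$ and the bubble above. Note $\norm{\Th}{\dualisch g} \lesssim \h^2 \norm{}{g}$, by \eqref{eq:bubble-l2} together with $\norm{\uah}{(\dualisch g,0)}^2 = (g,\dualisch g) \lesssim \h \norm{}{g} \norm{\uah}{(\dualisch g, 0)}$. Let $z$ solve \eqref{eq:weak-poisson} with datum $e_{\Th}$, so $\seminorm{H^2(\dom)}{z} \lesssim \norm{}{e_{\Th}}$. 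By \eqref{eq:interpolator-consistency} and symmetry,
\[
  \norm{\Th}{e_{\Th}}^2 = (-\Delta z, e_{\Th}) = \uah(\ul{\mathcal{J}}_\ell z, \ul{e}_\ell) + \Eh(z; \ul{e}_\ell) = \Eh(u; \ul{\mathcal{J}}_\ell z) + \Eh(z; \ul{e}_\ell) .
\]
The second term is $\lesssim \h \norm{}{e_{\Th}} \cdot \h \norm{}{g}$ by Lemma~\ref{lem:consistency}.

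The first term is the main obstacle. Lemma~\ref{lem:consistency} only gives $\h \norm{}{g} \norm{\uah}{\ul{\mathcal{J}}_\ell z}$, which is $O(\h)$, not $O(\h^2)$. I would instead use the identity \eqref{eq:consistency-identity} with $\vh = \ul{\mathcal{J}}_\ell z$. Both sums involve $\ul{\mathcal{J}}_\ell z$ only through face differences $\lproj{\partial t}{\k} z - \lproj{t}{\k+1} z$ and residuals $r_t(\ul{\mathcal{J}}_{\ell,t} z)$. Each of these is $O(h_t^{3/2}\seminorm{H^2(t)}{z})$ on $\partial t$ by \eqref{eq:approximation-bulk}, rather than merely $O(h_t^{1/2}\seminorm{H^1(t)}{z})$. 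Pairing them with the $O(h_t^{1/2}\seminorm{H^2(t)}{u})$ flux error from \eqref{eq:approximation-face} and the $O(h_t^{3/2})$ residual of $u$ gives $\h^2 \seminorm{H^2}{u} \seminorm{H^2}{z}$.

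There is one point to check carefully. In the first sum, $v_{\partial t} - v_t$ must be replaced by its projected version, or else bounded directly by $\norm{\partial t}{\lproj{\partial t}{\k}z - z} + \norm{\partial t}{z - \lproj{t}{\k+1} z}$; both are $O(h_t^{3/2})$. Dividing by $\norm{}{e_{\Th}}$ then closes the estimate and gives the $\h^{-1}$-weighted term of \eqref{eq:error}.
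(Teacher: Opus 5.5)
Your energy estimate, the bound on $\dualisch g$ and the duality identity $\norm{\Th}{e_{\Th}}^2 = \Eh(u;\ul{\mathcal{J}}_\ell z) + \Eh(z;\ul{e}_\ell)$ are all correct. Dualising $e_{\Th}$ rather than $u_{\Th} - u$ is a harmless, slightly leaner variant of the paper's Step~4. For the energy step you could also apply minimality directly to $\ul{e}_\ell$, whose face component is already $\lambda - \lproj{\Fh}{\k} u$.

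The gap is the superconvergence bound $|\Eh(u;\ul{\mathcal{J}}_\ell z)| \lesssim \h^2 \seminorm{H^2(\dom)}{u}\seminorm{H^2(\dom)}{z}$. The stabilisation part of \eqref{eq:consistency-identity} is fine, since $r_t(\ul{\mathcal{J}}_{\ell,t} z) = \lproj{\partial t}{\k}(z - \lproj{t}{\k+1} z)$. Your claim $\norm{\partial t}{\lproj{\partial t}{\k} z - z} \lesssim h_t^{3/2}\seminorm{H^2(t)}{z}$, however, is false:
\begin{enumerate}
\item \eqref{eq:approximation-bulk} concerns the cell projectors, not the face projector onto $\IPk{\k}(F)$.
\item That face space only reproduces constants and normal derivatives of $\Pk{\k+1}$, not traces of affine functions. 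For $\k = 0$ on a planar face, $\IPk{0}(F)$ is just the constants. A $z$ that is affine near $t$ with nonzero tangential gradient therefore makes your right-hand side vanish but not the left.
\item On a curved face made of many planar pieces, the fixed-dimensional space $\IPk{\k}(F)$ does not contain affine traces in general.
\end{enumerate}
In general only $\norm{\partial t}{\lproj{\partial t}{\k} z - z} \lesssim h_t^{1/2}\seminorm{1,t}{z}$ holds. Paired with the $O(h_t^{1/2}\seminorm{H^2(t)}{u})$ flux error, Cauchy--Schwarz then yields only $O(\h)$, so your duality argument gives nothing beyond the energy estimate. Replacing $v_{\partial t} - v_t$ by its projection does not help either, because $\nabla(u - \lproj{t}{1,\k+1} u)\cdot\bm{n}$ does not lie in $\IPk{\k}(F)$.

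The missing idea is that this contribution is not small but vanishes identically after summing over cells; this is the paper's treatment of $\mathfrak{T}_1$ in Step~3. Split $\lproj{\partial t}{\k} z - \lproj{t}{\k+1} z = (\lproj{\partial t}{\k} z - z) + (z - \lproj{t}{\k+1} z)$. The second piece is genuinely $O(h_t^{3/2}\seminorm{H^2(t)}{z})$ and is handled as you propose. For the first piece, $\sum_{t\in\Th} (\nabla(u - \lproj{t}{1,\k+1} u)\cdot\bm{n}, \lproj{\partial t}{\k} z - z)_{\partial t} = 0$, for three reasons:
\begin{enumerate}
\item On boundary faces $z = \lproj{F}{\k} z = 0$.
\item On an interior face, $\lproj{\Fh}{\k} z - z$ is single-valued. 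Since $u \in H^2(\dom)$, the traces of $\nabla u$ from the two sides coincide while the outward normals are opposite, so the $\nabla u$ contributions cancel.
\item $\nabla \lproj{t}{1,\k+1} u\cdot\bm{n}|_F \in \IPk{\k}(F)$ by the second inclusion of \eqref{eq:interface-space-design}, and this is $L^2(F)$-orthogonal to $z - \lproj{F}{\k} z$.
\end{enumerate}
This is the second essential use of Assumption~\ref{ass:face-spaces}; the first is \eqref{eq:elliptic-identity}, hidden in \eqref{eq:consistency-identity}. Without it, your argument does not deliver the $\h^{-1}$-weighted $L^2$ term of \eqref{eq:error}.
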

\begin{proof}
  Write $\lambda \doteq \Gh g$ and $\ulGh g = (u_{\Th}, \lambda)$ with $u_{\Th} = \isch \lambda + \dualisch g$.

  \emph{Step 1: energy error.} By \eqref{eq:solution-operator} and \eqref{eq:interpolator-consistency}, $\uah(\ulGh g - \ul{\mathcal{J}}_\ell u_g, \vh) = \Eh(u_g; \vh)$ for all $\vh \in \Uh$. Taking $\vh = \ulGh g - \ul{\mathcal{J}}_\ell u_g$,
  \begin{equation} \label{eq:energy-error}
    \norm{\uah}{\ulGh g - \ul{\mathcal{J}}_\ell u_g} \just{\lesssim}{\eqref{eq:consistency-bound}} \h \seminorm{H^2(\dom)}{u_g} \just{\lesssim}{\Atag{ass:regularity}} \h \norm{L^2(\dom)}{g},
  \end{equation}
  and the energy bound in \eqref{eq:error} follows, the skeletal component of $\ulGh g - \ul{\mathcal{J}}_\ell u_g$ being $\Gh g - \lproj{\Fh}{\k} u_g$ and static condensation being energy minimal.

  \emph{Step 2: the cell lifting is $O(\h^2)$.} Summing \eqref{eq:inverse-static-condensation-b} over the cells with $w = \dualisch g$ gives $\norm{\uah}{(\dualisch g, 0)}^2 = (g, \dualisch g)_{\Th} \leq \norm{L^2(\dom)}{g} \norm{\Th}{\dualisch g}$, and \eqref{eq:bubble-l2} bounds the last factor by $\h \norm{\uah}{(\dualisch g, 0)}$, so $\norm{\Th}{\dualisch g} \lesssim \h^2 \norm{L^2(\dom)}{g}$. By the triangle inequality it therefore suffices to prove $\norm{L^2(\dom)}{u_{\Th} - u_g} \lesssim \h^2 \norm{L^2(\dom)}{g}$.

  \emph{Step 3: superconvergence.} Let $z, w \in H^1_0(\dom) \cap H^2(\dom)$ and evaluate \eqref{eq:consistency-identity} at $\vh = \ul{\mathcal{J}}_\ell w$. Splitting the test difference on $\partial t$ as $\lproj{\partial t}{\k} w - \lproj{t}{\k+1} w = (\lproj{\partial t}{\k} w - w) + (w - \lproj{t}{\k+1} w)$, the first sum of \eqref{eq:consistency-identity} becomes $\mathfrak{T}_1 + \mathfrak{T}_2$, with
  \[
    \mathfrak{T}_i \doteq \sum_{t \in \Th} \big( \nabla (z - \lproj{t}{1,\k+1} z) \cdot \bm{n}, \, \phi_i \big)_{\partial t},
    \qquad \phi_1 \doteq \lproj{\partial t}{\k} w - w, \quad \phi_2 \doteq w - \lproj{t}{\k+1} w .
  \]
  The term $\mathfrak{T}_1$ vanishes. On a boundary face $\phi_1$ itself vanishes, $w \in H^1_0(\dom)$ giving $w|_F = \lproj{F}{\k} w = 0$. On an interior face the contribution of $\nabla z$ cancels between the two cells, $\lproj{\Fh}{\k} w - w$ being single-valued and $z \in H^2(\dom)$, and that of $\nabla \lproj{t}{1,\k+1} z$ vanishes because $\nabla \lproj{t}{1,\k+1} z \cdot \bm{n}|_F \in \IPk{\k}(F)$ by \eqi{eq:interface-space-design}{2} while $w - \lproj{F}{\k} w \perp \IPk{\k}(F)$.\footnote{This is the second place where Assumption~\ref{ass:face-spaces} is essential: for a face space violating it, $\mathfrak{T}_1$ is only $O(\h)$ and the duality gain is lost.} For $\mathfrak{T}_2$, Cauchy--Schwarz and \eqref{eq:approximation} with $s = 2$ give
  \[
    |\mathfrak{T}_2| \leq \sum_{t \in \Th} \norm{\partial t}{\nabla (z - \lproj{t}{1,\k+1} z)} \norm{\partial t}{w - \lproj{t}{\k+1} w}
    \lesssim \sum_{t \in \Th} h_t^{1/2} \seminorm{H^2(t)}{z} \cdot h_t^{3/2} \seminorm{H^2(t)}{w} .
  \]
  Bounding the stabilisation as in Lemma~\ref{lem:consistency}, but with \eqi{eq:approximation-bulk}{2} applied to both arguments,
  \begin{equation} \label{eq:superconvergence}
    |\Eh(z; \ul{\mathcal{J}}_\ell w)| \lesssim \h^2 \seminorm{H^2(\dom)}{z} \seminorm{H^2(\dom)}{w} .
  \end{equation}

  \emph{Step 4: duality.} Let $\phi \doteq u_{\Th} - u_g$ and let $u_\phi$ solve \eqref{eq:weak-poisson} with datum $\phi$. By \Aref{ass:regularity}, $u_\phi \in H^2(\dom)$, so $-\Delta u_\phi = \phi$ in $L^2(\dom)$. Split $\norm{L^2(\dom)}{\phi}^2 = (\phi, u_{\Th})_{\Th} - (\phi, u_g)_{\dom}$. Testing \eqref{eq:weak-poisson} for $u_g$ with $u_\phi$ and for $u_\phi$ with $u_g$ gives  $(\phi, u_g)_{\dom} = (\nabla u_g, \nabla u_\phi)_{\dom} = (g, u_\phi)_{\dom}$, while \eqref{eq:interpolator-consistency} evaluated at $\vh = \ulGh g$ gives
  \[
    (\phi, u_{\Th})_{\Th} = \Eh(u_\phi; \ulGh g) + \uah(\ul{\mathcal{J}}_\ell u_\phi, \ulGh g)
    = \Eh(u_\phi; \ulGh g) + (g, \lproj{\Th}{\k+1} u_\phi)_{\Th} ,
  \]
  the last equality by the symmetry of $\uah$ and \eqref{eq:discrete-poisson} tested with $\ul{\mathcal{J}}_\ell u_\phi$. Combining these results and inserting $\pm \ul{\mathcal{J}}_\ell u_g$ in the second argument of $\Eh(u_\phi; \cdot)$, we obtain 
  \[
    \norm{L^2(\dom)}{\phi}^2 = \Eh(u_\phi; \ulGh g - \ul{\mathcal{J}}_\ell u_g) + \Eh(u_\phi; \ul{\mathcal{J}}_\ell u_g) + (g, \lproj{\Th}{\k+1}u_\phi - u_\phi)_{\Th} .
  \]
  Each is bounded by $\h^2 \seminorm{H^2(\dom)}{u_\phi} \norm{L^2(\dom)}{g}$, in turn by \eqref{eq:consistency-bound} with \eqref{eq:energy-error}, by \eqref{eq:superconvergence}, and by \eqi{eq:approximation-bulk}{1}. Assumption~\ref{ass:regularity} gives $\seminorm{H^2(\dom)}{u_\phi} \lesssim \norm{L^2(\dom)}{\phi}$, and dividing concludes.
\end{proof}

\begin{corollary}[Residual representative]\label{cor:residual-representative}
  Let $\lambda \in \Uhb$ and put $f_\lambda \doteq \isch(\Ah \lambda) \in L^2(\dom)$. Then
  \begin{equation} \label{eq:residual-representative}
    \norm{L^2(\dom)}{f_\lambda} \leq \norm{\ell}{\Ah \lambda}, \qquad
    \norm{\ah}{\lambda - \Gh f_\lambda} \leq \h \norm{\ell}{\Ah \lambda} .
  \end{equation}
\end{corollary}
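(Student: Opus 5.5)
The first bound in \eqref{eq:residual-representative} is immediate from \eqi{eq:l2-representative}{1} applied to $\Ah\lambda \in \Uhb$: we have $\norm{L^2(\dom)}{f_\lambda} = \norm{\Th}{\isch \Ah\lambda} \leq \norm{\ell}{\Ah\lambda}$. Nothing more is needed there.

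For the second bound, I would compare the two discrete problems. By \eqref{eq:level-operator} and \eqref{def:skeleton-inner-product}, for every $\mu \in \Uhb$,
\[
  \ah(\lambda,\mu) = \braket{\Ah\lambda,\mu}_\ell = (\isch \Ah\lambda, \isch\mu)_{\Th} + \sum_{t\in\Th} h_t \big( r_t(\ulisch \Ah\lambda), r_t(\ulisch\mu) \big)_{\partial t}.
\]
By \eqref{eq:discrete-poisson-skeleton} with datum $f_\lambda$, $\ah(\Gh f_\lambda, \mu) = (f_\lambda, \isch\mu)_{\Th}$. The first term above is exactly $(f_\lambda,\isch\mu)_{\Th}$, so subtracting gives the error equation
\[
  \ah(\lambda - \Gh f_\lambda, \mu) = \sum_{t\in\Th} h_t \big( r_t(\ulisch \Ah\lambda), r_t(\ulisch\mu) \big)_{\partial t} \quad \forall \mu\in\Uhb .
\]

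The remaining step, which is the only one with any content, is to bound this face residual term. I would apply Cauchy--Schwarz and then, for the factor involving $\mu$, use \eqi{eq:face-bounds}{1}, which gives $\big(\sum_t h_t \norm{\partial t}{r_t(\ulisch\mu)}^2\big)^{1/2} \leq \h\norm{\ah}{\mu}$. For the factor involving $\Ah\lambda$, the definition \eqref{def:skeleton-inner-product} gives directly $\big(\sum_t h_t\norm{\partial t}{r_t(\ulisch\Ah\lambda)}^2\big)^{1/2} \leq \norm{\ell}{\Ah\lambda}$. Hence the right-hand side is at most $\h\norm{\ell}{\Ah\lambda}\norm{\ah}{\mu}$. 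Choosing $\mu = \lambda - \Gh f_\lambda \in \Uhb$ and dividing by $\norm{\ah}{\mu}$ (the case $\mu=0$ being trivial) yields the second bound with constant exactly one, consistent with the non-$\lesssim$ statement. No elliptic regularity or error estimate from Theorem~\ref{th:error} is needed. The point to keep straight is to place the $\h$ weight on the $\mu$ factor through \eqi{eq:face-bounds}{1}, and not on the $\Ah\lambda$ factor.
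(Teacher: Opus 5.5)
Your proposal is correct and follows the paper's proof essentially line by line. The first bound comes from the cell term of the skeletal norm. The second comes from the error equation $\ah(\lambda - \Gh f_\lambda,\mu) = \sum_{t} h_t ( r_t(\ulisch \Ah\lambda), r_t(\ulisch\mu))_{\partial t}$, bounded by Cauchy--Schwarz, the definition of $\norm{\ell}{\cdot}$ for the $\Ah\lambda$ factor and the first face bound for the $\mu$ factor, and then tested with $\mu = \lambda - \Gh f_\lambda$.
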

\begin{proof}
  The first bound is \eqi{eq:l2-representative}{1} applied to $\Ah \lambda$. For the second, \eqref{eq:level-operator} and \eqref{def:skeleton-inner-product} give, for all $\mu \in \Uhb$,
  \[
    \ah(\lambda, \mu) = \braket{\Ah\lambda, \mu}_\ell = (f_\lambda, \isch \mu)_{\Th} + \sum_{t \in \Th} h_t \big( r_t(\ulisch \Ah\lambda), r_t(\ulisch \mu) \big)_{\partial t},
  \]
  so that $\ah(\lambda - \Gh f_\lambda, \mu)$ equals the last sum. By Cauchy--Schwarz, \eqref{def:skeleton-inner-product} and \eqi{eq:face-bounds}{1},
  \[
    \Big| \sum_{t \in \Th} h_t \big( r_t(\ulisch \Ah\lambda), r_t(\ulisch \mu) \big)_{\partial t} \Big|
    \leq \norm{\ell}{\Ah\lambda} \Big( \sum_{t \in \Th} h_t \norm{\partial t}{r_t(\ulisch \mu)}^2 \Big)^{1/2}
    \leq \h \norm{\ell}{\Ah\lambda} \norm{\ah}{\mu},
  \]
  and taking $\mu = \lambda - \Gh f_\lambda$ concludes.
\end{proof}

\begin{remark}[The residual as a datum]\label{rem:face-based}
  The multigrid analysis needs to feed the residual $\Ah\lambda$ to the solution operator, but $\Ah\lambda \in \Uhb$ lives on the skeleton and is not a function on $\dom$. Since \eqref{def:skeleton-inner-product} pairs the hybrid extensions $\ulisch \lambda$ and $\ulisch \mu$, its bulk term supplies the datum directly; $f_\lambda = \isch(\Ah\lambda)$ is a broken polynomial, and \eqref{eq:residual-representative} follows from the definition of $\Ah$ with the exact constants of \eqi{eq:l2-representative}{1} and \eqi{eq:face-bounds}{1}. With a purely face-based inner product, as used in \cite{DiPietro2021,DiPietro2024,Lu2021,DiPietroHossjer2026}, with weights $|t|/|\partial t|$ in \cite{DiPietro2021,DiPietro2024}, no such datum is available. The analysis of \cite{DiPietro2021,DiPietro2024} constructs one through a lifting into a conforming subspace, which is what restricts it to simplicial hierarchies; the $p$-multigrid analysis of \cite{DiPietroHossjer2026} likewise relies on a Raviart--Thomas lifting on a simplicial submesh. The two are uniformly equivalent by \eqi{eq:face-bounds}{2} and the fact that $|t|/|\partial t| \simeq h_t$ on star-shaped cells, so the convergence theory is insensitive to the choice. What the bulk term changes is the route to the duality argument.
\end{remark}

\section{The multigrid method}\label{sec:method}

The method is a symmetric V-cycle on the hierarchy of \sect{sec:meshes}, built from two ingredients: an intergrid transfer operator and a smoother. Both are defined here; their analysis occupies \sect{sec:intergrid} and \sect{sec:smoother}.

\subsection{The V-cycle}\label{sec:vcycle}

Let $\IHh : \UHb \to \Uhb$ be a prolongation, chosen in \sect{sec:prolongation}, and let $\Sh : \Uhb \to \Uhb$ be a smoother, chosen in \sect{sec:smoother-def}. The restriction is the adjoint of the prolongation for the inner products \eqref{def:skeleton-inner-product}, which are definite by Lemma~\ref{lem:inner-product},
\begin{equation} \label{eq:restriction}
  \braket{\IhH v, w}_{\ell-1} = \braket{v, \IHh w}_\ell \qquad \forall v \in \Uhb, \ w \in \UHb .
\end{equation}
In the implemented algorithm residuals are vectors in the dual degree-of-freedom basis and this is the transpose of the prolongation matrix, a realisation independent of $\braket{\cdot,\cdot}_\ell$, so \eqref{eq:restriction} costs nothing to realise.

The multigrid operator $\Bh : \Uhb \to \Uhb$ is defined recursively. On the coarsest level $\Bl{0} \doteq \Al{0}^{-1}$. For $\ell \geq 1$ and $b \in \Uhb$, $\Bh b \doteq x^{(2m_\ell)}$, where $x^{(0)} \doteq 0$ and
\begin{subequations} \label{eq:vcycle}
\begin{align}
  x^{(i+1)} &= x^{(i)} + \Sh(b - \Ah x^{(i)}), && i = 0, \ldots, m_\ell - 1, \label{eq:vcycle-pre} \\
  x^{(m_\ell)} &\leftarrow x^{(m_\ell)} + \IHh \BH \IhH (b - \Ah x^{(m_\ell)}), \label{eq:vcycle-coarse} \\
  x^{(i+1)} &= x^{(i)} + \Sh'(b - \Ah x^{(i)}), && i = m_\ell, \ldots, 2m_\ell - 1, \label{eq:vcycle-post}
\end{align}
\end{subequations}
with $m_\ell \geq 1$ the number of smoothing steps and $\Sh'$ the $\braket{\cdot,\cdot}_\ell$-adjoint of $\Sh$. The post-smoothing uses $\Sh'$ so that $\Bh$ is self-adjoint for every choice of smoother; the smoothers of \sect{sec:smoother-def} satisfy $\Sh' = \Sh$. In terms of the error, \eqref{eq:vcycle} reads
\begin{equation} \label{eq:vcycle-error}
  \identity - \Bh \Ah = (\identity - \Sh' \Ah)^{m_\ell} \, (\identity - \IHh \BH \IhH \Ah) \, (\identity - \Sh \Ah)^{m_\ell} .
\end{equation}
Taking $m_\ell = m$ on every level gives the standard V-cycle. The variable V-cycle instead lets the number of smoothing steps grow towards the coarse levels,
\begin{equation}\label{eq:smoothing-steps-condition}
  \rho_1 m_\ell \leq m_{\ell-1} \leq \rho_2 m_\ell \qquad \forall \ell \leq L,
\end{equation}
for constants $1 < \rho_1 \leq \rho_2$ independent of $\ell$. In the implementation the right-hand sides $b$ are not formed level by level; one carries the correction and the residual $b - \Ah x$ simultaneously.

\subsection{Intergrid transfer}\label{sec:prolongation}

Both prolongations reconstruct a polynomial on the coarse cells and project it onto the fine faces. The projection is the weighted $L^2$-projector $\Pavh : H^1(\TH) \to \Uhb$,
\begin{equation}
  \label{def:Pi-av}
  \Pavh(v)|_F \doteq
  \begin{cases}
    \lproj{F}{\k} (v|_T)|_F & F \in \Fhi \setminus \FH, \\
    \alpha_{FT} \lproj{F}{\k} (v|_T)|_F + \alpha_{FT'} \lproj{F}{\k}(v|_{T'})|_F & F \in \Fhi \cap \FH, \\
    0 & F \in \Fhb,
  \end{cases}
\end{equation}
where $T, T' \in \TH$ are the coarse cells sharing $F$ and the weights form a positive partition of unity, $\alpha_{FT}, \alpha_{FT'} > 0$ and $\alpha_{FT} + \alpha_{FT'} = 1$; we take $\alpha_{FT} = |T|/(|T| + |T'|)$. A fine face interior to a coarse cell sees a single coarse polynomial, one lying on a coarse face sees the two adjacent ones, and boundary faces carry no unknown.

\begin{definition}[Prolongations]\label{def:reconstruction-prolongation}
  With $\trH : \UH \to \UHb$ the trace $\trH(v_{\TH}, \mu) \doteq \mu$,
  \begin{equation} \label{eq:prolongations}
    \IUh \doteq \Pavh \, \iscH, \qquad
    \IRh \doteq \Pavh \, \rec{\ell-1}{\K+1} \, \ulisc{\ell-1}, \qquad
    \ul{I}_\ell \doteq \ulisch \, \IHh \, \trH ,
  \end{equation}
  the last being the extension of either prolongation to the hybrid space.
\end{definition}

The two differ in what is projected: $\IUh$ transfers the condensed cell polynomial $\iscH \mu$, $\IRh$ its reconstruction $\rec{\ell-1}{\K+1} \ulisc{\ell-1} \mu$. Both were proposed for standard face spaces in \cite{DiPietro2021,Lu2021,Lu2022}; here they act into the spaces \eqref{eq:interface-space}.

\subsection{Smoothers}\label{sec:smoother-def}

The smoothers are additive Schwarz iterations over patches of face unknowns. The patches are indexed by a set $\mathcal{D}$ of entities of the skeleton, the choice of $\mathcal{D}$ being what distinguishes one smoother from another. For $r \in \mathcal{D}$ let $\mathcal{F}_r \subset \Fhi$ be the interfaces meeting $r$, let
\begin{equation} \label{def:patch-space}
  N_r \doteq \{ \mu \in \Uhb : \mu|_F = 0 \ \ \forall F \in \Fhi \setminus \mathcal{F}_r \},
\end{equation}
and let $\omega_r \subset \dom$ be the interior of the union of the cells meeting $\mathcal{F}_r$. Let $I_r : N_r \to \Uhb$ be the canonical embedding, $I_r'$ its $\braket{\cdot,\cdot}_\ell$-adjoint, and $A_r : N_r \to N_r$ the local operator, $\braket{A_r \mu, \eta}_\ell = \ah(\mu,\eta)$ for $\mu, \eta \in N_r$, with energy norm $\norm{A_r}{\cdot}$. The smoother is
\begin{equation} \label{def:patch-smoother}
  \Sh \doteq \omega \sum_{r \in \mathcal{D}} I_r A_r^{-1} I_r', \qquad \omega > 0 ,
\end{equation}
and each $A_r$ is symmetric positive definite, so $\Sh' = \Sh$. Such patch smoothers are standard in multigrid for $H(\mathrm{div})$ and $H(\mathrm{curl})$ Riesz maps \cite{Arnold1997,Arnold2000}, for elasticity \cite{Schoberl1999} and for Navier--Stokes \cite{Farrell2019}, and are subspace correction methods in the sense of \cite{Xu1992}.

The \emph{face-star} patches take $\mathcal{D} = \Fhi$ and $\mathcal{F}_F = \{F\}$; the local problem is posed on the interior of $\ol{t} \cup \ol{t'}$ with homogeneous conditions on its boundary. The \emph{vertex-star} patches take $\mathcal{D}$ to be the vertices of the skeleton, $\mathcal{F}_r$ collecting the $F \in \Fhi$ that contain a given vertex; in three dimensions the \emph{edge-star} patches take $\mathcal{D}$ to be its edges. Figure~\ref{fig:smoothers} shows the two families. The condensed extension of a function of $N_r$ vanishes outside $\omega_r$, the local problem \eqref{eq:inverse-static-condensation-a} there having zero data, so $\ah(\mu,\eta) = 0$ whenever $\mu \in N_r$, $\eta \in N_{r'}$ and $\omega_r \cap \omega_{r'} = \emptyset$.

\begin{assumption}[Patch families]\label{ass:patches}
  Every interior face belongs to at least one patch, $\bigcup_{r \in \mathcal{D}} \mathcal{F}_r = \Fhi$, and the supports $\{\omega_r\}_{r \in \mathcal{D}}$ can be coloured with at most $\NO$ colours, independently of $\ell$, so that patches of the same colour have disjoint supports.
\end{assumption}

The first condition is automatic for the face-star family. For the vertex- and edge-star families it can fail on degenerate agglomerations, where a cell enclosed by a single neighbour produces a face carrying no vertex, and it is needed for $\Sh$ to be invertible, since otherwise $\sum_r N_r$ is a proper subspace of $\Uhb$. The colouring condition holds for all three families. Two supports meet only if their generating entities lie in a common cell, so the patches conflicting with a given one are those generated by an entity of a cell meeting $\mathcal{F}_r$. Each cell contains a ball of radius $\varrho h_t$ by \eqref{eq:geometry-a} and has diameter $\simeq \h$ by \eqi{eq:geometry-b}{2}, so a bounded number of cells meet any given point, and each carries a bounded number of faces by \eqi{eq:geometry-b}{1}, hence of generating entities. The conflict graph therefore has bounded degree and a greedy colouring uses boundedly many colours.

\begin{figure}
  \centering
  \begin{tikzpicture}[scale=4]

\coordinate (m1) at ( 0.0, 0.125 ) {};
\coordinate (m2) at ( 0.0, 0.0 ) {};
\coordinate (m3) at ( 0.125, 0.0 ) {};
\coordinate (m4) at ( 0.25, 0.0 ) {};
\coordinate (m5) at ( 0.375, 0.0 ) {};
\coordinate (m6) at ( 0.5, 0.0 ) {};
\coordinate (m7) at ( 0.625, 0.0 ) {};
\coordinate (m8) at ( 0.5833333333333334, 0.08333333333333333 ) {};
\coordinate (m9) at ( 0.6666666666666666, 0.16666666666666666 ) {};
\coordinate (m10) at ( 0.5833333333333334, 0.3333333333333333 ) {};
\coordinate (m11) at ( 0.6666666666666666, 0.4166666666666667 ) {};
\coordinate (m12) at ( 0.5833333333333334, 0.5833333333333334 ) {};
\coordinate (m13) at ( 0.4166666666666667, 0.6666666666666666 ) {};
\coordinate (m14) at ( 0.3333333333333333, 0.5833333333333334 ) {};
\coordinate (m15) at ( 0.16666666666666666, 0.6666666666666666 ) {};
\coordinate (m16) at ( 0.08333333333333333, 0.5833333333333334 ) {};
\coordinate (m17) at ( 0.0, 0.625 ) {};
\coordinate (m18) at ( 0.0, 0.5 ) {};
\coordinate (m19) at ( 0.0, 0.375 ) {};
\coordinate (m20) at ( 0.0, 0.25 ) {};
\coordinate (m21) at ( 0.75, 0.0 ) {};
\coordinate (m22) at ( 0.875, 0.0 ) {};
\coordinate (m23) at ( 1.0, 0.0 ) {};
\coordinate (m24) at ( 1.0, 0.125 ) {};
\coordinate (m25) at ( 1.0, 0.25 ) {};
\coordinate (m26) at ( 1.0, 0.375 ) {};
\coordinate (m27) at ( 1.0, 0.5 ) {};
\coordinate (m28) at ( 1.0, 0.625 ) {};
\coordinate (m29) at ( 0.9166666666666666, 0.6666666666666666 ) {};
\coordinate (m30) at ( 0.8333333333333334, 0.5833333333333334 ) {};
\coordinate (m31) at ( 0.6666666666666666, 0.6666666666666666 ) {};
\coordinate (m32) at ( 0.5833333333333334, 0.8333333333333334 ) {};
\coordinate (m33) at ( 0.6666666666666666, 0.9166666666666666 ) {};
\coordinate (m34) at ( 0.625, 1.0 ) {};
\coordinate (m35) at ( 0.5, 1.0 ) {};
\coordinate (m36) at ( 0.375, 1.0 ) {};
\coordinate (m37) at ( 0.25, 1.0 ) {};
\coordinate (m38) at ( 0.125, 1.0 ) {};
\coordinate (m39) at ( 0.0, 1.0 ) {};
\coordinate (m40) at ( 0.0, 0.875 ) {};
\coordinate (m41) at ( 0.0, 0.75 ) {};
\coordinate (m42) at ( 1.0, 0.75 ) {};
\coordinate (m43) at ( 1.0, 0.875 ) {};
\coordinate (m44) at ( 1.0, 1.0 ) {};
\coordinate (m45) at ( 0.875, 1.0 ) {};
\coordinate (m46) at ( 0.75, 1.0 ) {};

\draw[thick] (m1) -- (m2);
\draw[thick] (m2) -- (m3);
\draw[thick] (m3) -- (m4);
\draw[thick] (m4) -- (m5);
\draw[thick] (m5) -- (m6);
\draw[thick] (m6) -- (m7);
\draw[thick] (m7) -- (m8);
\draw[thick] (m8) -- (m9);
\draw[thick] (m9) -- (m10);
\draw[thick] (m10) -- (m11);
\draw[thick] (m11) -- (m12);
\draw[thick] (m12) -- (m13);
\draw[thick] (m13) -- (m14);
\draw[thick] (m14) -- (m15);
\draw[thick] (m15) -- (m16);
\draw[thick] (m16) -- (m17);
\draw[thick] (m17) -- (m18);
\draw[thick] (m18) -- (m19);
\draw[thick] (m19) -- (m20);
\draw[thick] (m20) -- (m1);
\draw[thick] (m7) -- (m21);
\draw[thick] (m21) -- (m22);
\draw[thick] (m22) -- (m23);
\draw[thick] (m23) -- (m24);
\draw[thick] (m24) -- (m25);
\draw[thick] (m25) -- (m26);
\draw[thick] (m26) -- (m27);
\draw[thick] (m27) -- (m28);
\draw[thick] (m28) -- (m29);
\draw[thick] (m29) -- (m30);
\draw[thick] (m30) -- (m31);
\draw[thick] (m31) -- (m12);
\draw[thick] (m31) -- (m32);
\draw[thick] (m32) -- (m33);
\draw[thick] (m33) -- (m34);
\draw[thick] (m34) -- (m35);
\draw[thick] (m35) -- (m36);
\draw[thick] (m36) -- (m37);
\draw[thick] (m37) -- (m38);
\draw[thick] (m38) -- (m39);
\draw[thick] (m39) -- (m40);
\draw[thick] (m40) -- (m41);
\draw[thick] (m41) -- (m17);
\draw[thick] (m28) -- (m42);
\draw[thick] (m42) -- (m43);
\draw[thick] (m43) -- (m44);
\draw[thick] (m44) -- (m45);
\draw[thick] (m45) -- (m46);
\draw[thick] (m46) -- (m34);

\coordinate (p1_1) at ( 0.02, 0.125 ) {};
\coordinate (p1_2) at ( 0.014142135623730949, 0.014142135623730949 ) {};
\coordinate (p1_3) at ( 0.125, 0.02 ) {};
\coordinate (p1_4) at ( 0.25, 0.02 ) {};
\coordinate (p1_5) at ( 0.375, 0.02 ) {};
\coordinate (p1_6) at ( 0.5, 0.02 ) {};
\coordinate (p1_7) at ( 0.625, 0.02 ) {};
\coordinate (p1_8) at ( 0.75, 0.02 ) {};
\coordinate (p1_9) at ( 0.875, 0.02 ) {};
\coordinate (p1_10) at ( 0.9858578643762691, 0.014142135623730949 ) {};
\coordinate (p1_11) at ( 0.98, 0.125 ) {};
\coordinate (p1_12) at ( 0.98, 0.25 ) {};
\coordinate (p1_13) at ( 0.98, 0.375 ) {};
\coordinate (p1_14) at ( 0.98, 0.5 ) {};
\coordinate (p1_15) at ( 0.9829869838329592, 0.6144853777576174 ) {};
\coordinate (p1_16) at ( 0.919870311526806, 0.6469249175139167 ) {};
\coordinate (p1_17) at ( 0.8365369781934727, 0.5635915841805834 ) {};
\coordinate (p1_18) at ( 0.669870311526806, 0.6469249175139167 ) {};
\coordinate (p1_19) at ( 0.5865369781934727, 0.5635915841805834 ) {};
\coordinate (p1_20) at ( 0.41987031152680604, 0.6469249175139167 ) {};
\coordinate (p1_21) at ( 0.33653697819347267, 0.5635915841805834 ) {};
\coordinate (p1_22) at ( 0.169870311526806, 0.6469249175139167 ) {};
\coordinate (p1_23) at ( 0.08653697819347267, 0.5635915841805834 ) {};
\coordinate (p1_24) at ( 0.010514622242382676, 0.6079869838329592 ) {};
\coordinate (p1_25) at ( 0.02, 0.5 ) {};
\coordinate (p1_26) at ( 0.02, 0.375 ) {};
\coordinate (p1_27) at ( 0.02, 0.25 ) {};

\fill[blue, opacity=0.3, draw=none] (p1_1) -- (p1_2) -- (p1_3) -- (p1_4) -- (p1_5) -- (p1_6) -- (p1_7) -- (p1_8) -- (p1_9) -- (p1_10) -- (p1_11) -- (p1_12) -- (p1_13) -- (p1_14) -- (p1_15) -- (p1_16) -- (p1_17) -- (p1_18) -- (p1_19) -- (p1_20) -- (p1_21) -- (p1_22) -- (p1_23) -- (p1_24) -- (p1_25) -- (p1_26) -- (p1_27) -- cycle;

\draw[very thick, black!20!blue] (m7) -- (m8);
\draw[very thick, black!20!blue] (m8) -- (m9);
\draw[very thick, black!20!blue] (m9) -- (m10);
\draw[very thick, black!20!blue] (m10) -- (m11);
\draw[very thick, black!20!blue] (m11) -- (m12);
\coordinate (p2_1) at ( 0.6030750824860833, 0.08012968847319399 ) {};
\coordinate (p2_2) at ( 0.6355146222423826, 0.017013016167040797 ) {};
\coordinate (p2_3) at ( 0.75, 0.02 ) {};
\coordinate (p2_4) at ( 0.875, 0.02 ) {};
\coordinate (p2_5) at ( 0.9858578643762691, 0.014142135623730949 ) {};
\coordinate (p2_6) at ( 0.98, 0.125 ) {};
\coordinate (p2_7) at ( 0.98, 0.25 ) {};
\coordinate (p2_8) at ( 0.98, 0.375 ) {};
\coordinate (p2_9) at ( 0.98, 0.5 ) {};
\coordinate (p2_10) at ( 0.9829869838329592, 0.6144853777576174 ) {};
\coordinate (p2_11) at ( 0.919870311526806, 0.6469249175139167 ) {};
\coordinate (p2_12) at ( 0.8365369781934727, 0.5635915841805834 ) {};
\coordinate (p2_13) at ( 0.6525245310429357, 0.6525245310429357 ) {};
\coordinate (p2_14) at ( 0.5635915841805834, 0.8365369781934727 ) {};
\coordinate (p2_15) at ( 0.6469249175139167, 0.919870311526806 ) {};
\coordinate (p2_16) at ( 0.6144853777576174, 0.9829869838329592 ) {};
\coordinate (p2_17) at ( 0.5, 0.98 ) {};
\coordinate (p2_18) at ( 0.375, 0.98 ) {};
\coordinate (p2_19) at ( 0.25, 0.98 ) {};
\coordinate (p2_20) at ( 0.125, 0.98 ) {};
\coordinate (p2_21) at ( 0.014142135623730949, 0.9858578643762691 ) {};
\coordinate (p2_22) at ( 0.02, 0.875 ) {};
\coordinate (p2_23) at ( 0.02, 0.75 ) {};
\coordinate (p2_24) at ( 0.017013016167040797, 0.6355146222423826 ) {};
\coordinate (p2_25) at ( 0.08012968847319399, 0.6030750824860833 ) {};
\coordinate (p2_26) at ( 0.1634630218065273, 0.6864084158194166 ) {};
\coordinate (p2_27) at ( 0.33012968847319396, 0.6030750824860833 ) {};
\coordinate (p2_28) at ( 0.41346302180652733, 0.6864084158194166 ) {};
\coordinate (p2_29) at ( 0.5974754689570643, 0.5974754689570643 ) {};
\coordinate (p2_30) at ( 0.6864084158194166, 0.41346302180652733 ) {};
\coordinate (p2_31) at ( 0.6030750824860833, 0.33012968847319396 ) {};
\coordinate (p2_32) at ( 0.6864084158194166, 0.1634630218065273 ) {};

\fill[red, opacity=0.3, draw=none] (p2_1) -- (p2_2) -- (p2_3) -- (p2_4) -- (p2_5) -- (p2_6) -- (p2_7) -- (p2_8) -- (p2_9) -- (p2_10) -- (p2_11) -- (p2_12) -- (p2_13) -- (p2_14) -- (p2_15) -- (p2_16) -- (p2_17) -- (p2_18) -- (p2_19) -- (p2_20) -- (p2_21) -- (p2_22) -- (p2_23) -- (p2_24) -- (p2_25) -- (p2_26) -- (p2_27) -- (p2_28) -- (p2_29) -- (p2_30) -- (p2_31) -- (p2_32) -- cycle;

\draw[very thick, black!20!red] (m31) -- (m12);
\end{tikzpicture}
  \begin{tikzpicture}[scale=4]

\coordinate (m1) at ( 0.0, 0.125 ) {};
\coordinate (m2) at ( 0.0, 0.0 ) {};
\coordinate (m3) at ( 0.125, 0.0 ) {};
\coordinate (m4) at ( 0.25, 0.0 ) {};
\coordinate (m5) at ( 0.375, 0.0 ) {};
\coordinate (m6) at ( 0.5, 0.0 ) {};
\coordinate (m7) at ( 0.625, 0.0 ) {};
\coordinate (m8) at ( 0.5833333333333334, 0.08333333333333333 ) {};
\coordinate (m9) at ( 0.6666666666666666, 0.16666666666666666 ) {};
\coordinate (m10) at ( 0.5833333333333334, 0.3333333333333333 ) {};
\coordinate (m11) at ( 0.6666666666666666, 0.4166666666666667 ) {};
\coordinate (m12) at ( 0.5833333333333334, 0.5833333333333334 ) {};
\coordinate (m13) at ( 0.4166666666666667, 0.6666666666666666 ) {};
\coordinate (m14) at ( 0.3333333333333333, 0.5833333333333334 ) {};
\coordinate (m15) at ( 0.16666666666666666, 0.6666666666666666 ) {};
\coordinate (m16) at ( 0.08333333333333333, 0.5833333333333334 ) {};
\coordinate (m17) at ( 0.0, 0.625 ) {};
\coordinate (m18) at ( 0.0, 0.5 ) {};
\coordinate (m19) at ( 0.0, 0.375 ) {};
\coordinate (m20) at ( 0.0, 0.25 ) {};
\coordinate (m21) at ( 0.75, 0.0 ) {};
\coordinate (m22) at ( 0.875, 0.0 ) {};
\coordinate (m23) at ( 1.0, 0.0 ) {};
\coordinate (m24) at ( 1.0, 0.125 ) {};
\coordinate (m25) at ( 1.0, 0.25 ) {};
\coordinate (m26) at ( 1.0, 0.375 ) {};
\coordinate (m27) at ( 1.0, 0.5 ) {};
\coordinate (m28) at ( 1.0, 0.625 ) {};
\coordinate (m29) at ( 0.9166666666666666, 0.6666666666666666 ) {};
\coordinate (m30) at ( 0.8333333333333334, 0.5833333333333334 ) {};
\coordinate (m31) at ( 0.6666666666666666, 0.6666666666666666 ) {};
\coordinate (m32) at ( 0.5833333333333334, 0.8333333333333334 ) {};
\coordinate (m33) at ( 0.6666666666666666, 0.9166666666666666 ) {};
\coordinate (m34) at ( 0.625, 1.0 ) {};
\coordinate (m35) at ( 0.5, 1.0 ) {};
\coordinate (m36) at ( 0.375, 1.0 ) {};
\coordinate (m37) at ( 0.25, 1.0 ) {};
\coordinate (m38) at ( 0.125, 1.0 ) {};
\coordinate (m39) at ( 0.0, 1.0 ) {};
\coordinate (m40) at ( 0.0, 0.875 ) {};
\coordinate (m41) at ( 0.0, 0.75 ) {};
\coordinate (m42) at ( 1.0, 0.75 ) {};
\coordinate (m43) at ( 1.0, 0.875 ) {};
\coordinate (m44) at ( 1.0, 1.0 ) {};
\coordinate (m45) at ( 0.875, 1.0 ) {};
\coordinate (m46) at ( 0.75, 1.0 ) {};

\draw[thick] (m1) -- (m2);
\draw[thick] (m2) -- (m3);
\draw[thick] (m3) -- (m4);
\draw[thick] (m4) -- (m5);
\draw[thick] (m5) -- (m6);
\draw[thick] (m6) -- (m7);
\draw[thick] (m7) -- (m8);
\draw[thick] (m8) -- (m9);
\draw[thick] (m9) -- (m10);
\draw[thick] (m10) -- (m11);
\draw[thick] (m11) -- (m12);
\draw[thick] (m12) -- (m13);
\draw[thick] (m13) -- (m14);
\draw[thick] (m14) -- (m15);
\draw[thick] (m15) -- (m16);
\draw[thick] (m16) -- (m17);
\draw[thick] (m17) -- (m18);
\draw[thick] (m18) -- (m19);
\draw[thick] (m19) -- (m20);
\draw[thick] (m20) -- (m1);
\draw[thick] (m7) -- (m21);
\draw[thick] (m21) -- (m22);
\draw[thick] (m22) -- (m23);
\draw[thick] (m23) -- (m24);
\draw[thick] (m24) -- (m25);
\draw[thick] (m25) -- (m26);
\draw[thick] (m26) -- (m27);
\draw[thick] (m27) -- (m28);
\draw[thick] (m28) -- (m29);
\draw[thick] (m29) -- (m30);
\draw[thick] (m30) -- (m31);
\draw[thick] (m31) -- (m12);
\draw[thick] (m31) -- (m32);
\draw[thick] (m32) -- (m33);
\draw[thick] (m33) -- (m34);
\draw[thick] (m34) -- (m35);
\draw[thick] (m35) -- (m36);
\draw[thick] (m36) -- (m37);
\draw[thick] (m37) -- (m38);
\draw[thick] (m38) -- (m39);
\draw[thick] (m39) -- (m40);
\draw[thick] (m40) -- (m41);
\draw[thick] (m41) -- (m17);
\draw[thick] (m28) -- (m42);
\draw[thick] (m42) -- (m43);
\draw[thick] (m43) -- (m44);
\draw[thick] (m44) -- (m45);
\draw[thick] (m45) -- (m46);
\draw[thick] (m46) -- (m34);

\coordinate (p1_1) at ( 0.6030750824860833, 0.08012968847319399 ) {};
\coordinate (p1_2) at ( 0.6355146222423826, 0.017013016167040797 ) {};
\coordinate (p1_3) at ( 0.75, 0.02 ) {};
\coordinate (p1_4) at ( 0.875, 0.02 ) {};
\coordinate (p1_5) at ( 0.9858578643762691, 0.014142135623730949 ) {};
\coordinate (p1_6) at ( 0.98, 0.125 ) {};
\coordinate (p1_7) at ( 0.98, 0.25 ) {};
\coordinate (p1_8) at ( 0.98, 0.375 ) {};
\coordinate (p1_9) at ( 0.98, 0.5 ) {};
\coordinate (p1_10) at ( 0.98, 0.625 ) {};
\coordinate (p1_11) at ( 0.98, 0.75 ) {};
\coordinate (p1_12) at ( 0.98, 0.875 ) {};
\coordinate (p1_13) at ( 0.9858578643762691, 0.9858578643762691 ) {};
\coordinate (p1_14) at ( 0.875, 0.98 ) {};
\coordinate (p1_15) at ( 0.75, 0.98 ) {};
\coordinate (p1_16) at ( 0.625, 0.98 ) {};
\coordinate (p1_17) at ( 0.5, 0.98 ) {};
\coordinate (p1_18) at ( 0.375, 0.98 ) {};
\coordinate (p1_19) at ( 0.25, 0.98 ) {};
\coordinate (p1_20) at ( 0.125, 0.98 ) {};
\coordinate (p1_21) at ( 0.014142135623730949, 0.9858578643762691 ) {};
\coordinate (p1_22) at ( 0.02, 0.875 ) {};
\coordinate (p1_23) at ( 0.02, 0.75 ) {};
\coordinate (p1_24) at ( 0.017013016167040797, 0.6355146222423826 ) {};
\coordinate (p1_25) at ( 0.08012968847319399, 0.6030750824860833 ) {};
\coordinate (p1_26) at ( 0.1634630218065273, 0.6864084158194166 ) {};
\coordinate (p1_27) at ( 0.33012968847319396, 0.6030750824860833 ) {};
\coordinate (p1_28) at ( 0.41346302180652733, 0.6864084158194166 ) {};
\coordinate (p1_29) at ( 0.5974754689570643, 0.5974754689570643 ) {};
\coordinate (p1_30) at ( 0.6864084158194166, 0.41346302180652733 ) {};
\coordinate (p1_31) at ( 0.6030750824860833, 0.33012968847319396 ) {};
\coordinate (p1_32) at ( 0.6864084158194166, 0.1634630218065273 ) {};

\fill[green, opacity=0.3, draw=none] (p1_1) -- (p1_2) -- (p1_3) -- (p1_4) -- (p1_5) -- (p1_6) -- (p1_7) -- (p1_8) -- (p1_9) -- (p1_10) -- (p1_11) -- (p1_12) -- (p1_13) -- (p1_14) -- (p1_15) -- (p1_16) -- (p1_17) -- (p1_18) -- (p1_19) -- (p1_20) -- (p1_21) -- (p1_22) -- (p1_23) -- (p1_24) -- (p1_25) -- (p1_26) -- (p1_27) -- (p1_28) -- (p1_29) -- (p1_30) -- (p1_31) -- (p1_32) -- cycle;

\draw[very thick, black!30!green] (m28) -- (m29);
\draw[very thick, black!30!green] (m29) -- (m30);
\draw[very thick, black!30!green] (m30) -- (m31);
\draw[very thick, black!30!green] (m31) -- (m12);
\draw[very thick, black!30!green] (m31) -- (m32);
\draw[very thick, black!30!green] (m32) -- (m33);
\draw[very thick, black!30!green] (m33) -- (m34);
\node[fill=black!50!green, inner sep=0pt, minimum size=6pt] at (m31) {};

\end{tikzpicture}
  \caption{The two star-patch families: two face-star patches (red and blue) and a vertex-star patch (green). Generating entities are drawn thick; only the unknowns on the generating faces are active.}
  \label{fig:smoothers}
\end{figure}

\section{Convergence framework}\label{sec:framework}

We use the framework of \cite{Duan2007AGB}, which extends \cite{Bramble1991} to hierarchies whose coarse forms are not inherited from the fine one. It applies to the V-cycle \eqref{eq:vcycle} through three hypotheses on the ingredients of \sect{sec:method}, stated in \sect{sec:framework-assumptions} and reduced in \sect{sec:framework-roadmap} to properties proved in \sect{sec:intergrid} and \sect{sec:smoother}.

\subsection{The three hypotheses}\label{sec:framework-assumptions}

Let $\Ph : \Uhb \to \UHb$ be the coarse energy projection, defined by
\begin{equation} \label{eq:coarse-projection}
  \aH(\Ph \lambda, \mu) = \ah(\lambda, \IHh \mu) \qquad \forall \lambda \in \Uhb, \ \mu \in \UHb,
\end{equation}
and let
\begin{equation} \label{eq:symmetrised-smoother}
  \Kh \doteq \big( \identity - (\identity - \Sh' \Ah)(\identity - \Sh \Ah) \big) \Ah^{-1}
\end{equation}
be the operator of one complete pre- and post-smoothing step applied to a residual.

\begin{assumption}[Smoothing]\label{ass:smoothing}
  {\assnum
  \begin{equation} \label{eq:ass-smoothing}
    \h^2 \norm{\ell}{\lambda}^2 \leq C_S \braket{\Kh \lambda, \lambda}_\ell \qquad \forall \lambda \in \Uhb .
  \end{equation}}
\end{assumption}

\begin{assumption}[Bounded coarse correction]\label{ass:bounded}
  {\assnum
  \begin{equation} \label{eq:ass-bounded}
    \norm{\ah}{\lambda - \IHh \Ph \lambda} \leq C_B \norm{\ah}{\lambda} \qquad \forall \lambda \in \Uhb .
  \end{equation}}
\end{assumption}

\begin{assumption}[Regularity and approximation]\label{ass:regularity-approx}
  {\assnum
  \begin{equation} \label{eq:ass-regularity}
    \big| \ah(\lambda - \IHh \Ph \lambda, \lambda) \big| \leq C_R \, \h^2 \norm{\ell}{\Ah \lambda}^2 \qquad \forall \lambda \in \Uhb .
  \end{equation}}
\end{assumption}

\begin{theorem}[V-cycle, {\cite[Theorem 3.1]{Duan2007AGB}}]\label{th:vcycle}
  Let Assumptions~\ref{ass:smoothing}--\ref{ass:regularity-approx} hold and let $m_\ell = m > 2 C_R C_S$ on every level. Then
  \begin{equation} \label{eq:vcycle-contraction}
    \big| \ah((\identity - \Bh \Ah)\lambda, \lambda) \big| \leq \delta \, \ah(\lambda,\lambda) \qquad \forall \lambda \in \Uhb, \qquad
    \delta = \frac{C_R C_S}{m - C_R C_S} < 1,
  \end{equation}
  with $\delta$ independent of $\ell$.
\end{theorem}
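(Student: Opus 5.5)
The plan follows the standard Bramble--Pasciak--Xu induction on the level, in the form adapted by \cite{Duan2007AGB} to coarse forms that are not inherited from the fine one. We show by induction on $\ell$ that
\[
  0 \leq \ah((\identity - \Bh \Ah)\lambda,\lambda) \leq \delta\, \ah(\lambda,\lambda) .
\]
The base case $\ell = 0$ is immediate, since $\Bl{0} = \Al{0}^{-1}$. Writing $E_\ell \doteq \identity - \Bh\Ah$, the error relation \eqref{eq:vcycle-error} factorises as $E_\ell = (G_\ell^{*})^{m}\,(\identity - \IHh \BH \IhH \Ah)\, G_\ell^{m}$, with $G_\ell = \identity - \Sh\Ah$ and $G_\ell^{*}$ its $\ah$-adjoint. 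Since $\BH$ is $\braket{\cdot,\cdot}_{\ell-1}$-self-adjoint, $E_\ell$ is $\ah$-self-adjoint.

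\emph{Coarse-grid step.} By \eqref{eq:restriction} and \eqref{eq:coarse-projection} we have $\IhH \Ah = \Al{\ell-1}\Ph$. Hence
\[
  \identity - \IHh\BH\IhH\Ah = (\identity - \IHh\Ph) + \IHh(\identity - \BH\Al{\ell-1})\Ph .
\]
Set $w = G_\ell^m\lambda$. The induction hypothesis at level $\ell-1$, applied to $\Ph w$, gives
\[
  \ah(E_\ell\lambda,\lambda) = \ah((\identity-\IHh\Ph)w,w) + \aH(E_{\ell-1}\Ph w,\Ph w) \leq \ah((\identity-\IHh\Ph)w,w) + \delta\,\aH(\Ph w,\Ph w) .
\]
We also have $\aH(\Ph w,\Ph w) = \ah(w,\IHh\Ph w) = \ah(w,w) - \ah((\identity-\IHh\Ph)w,w)$. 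Combining,
\[
  \ah(E_\ell\lambda,\lambda) \leq (1-\delta)\,\ah((\identity-\IHh\Ph)w,w) + \delta\,\ah(w,w) .
\]
By Assumption~\ref{ass:regularity-approx}, the first term is at most $(1-\delta)C_R\h^2\norm{\ell}{\Ah w}^2$. Note that $\ah((\identity-\IHh\Ph)w,w)$ need not be nonnegative, since $\IHh\Ph$ is not an $\ah$-orthogonal projection. This is exactly why \cite{Duan2007AGB} works with absolute values, and why Assumption~\ref{ass:bounded} is needed: it gives the lower bound $\ah(E_\ell\lambda,\lambda) \geq -C\,\ah(w,w)$. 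It also controls $\aH(\Ph w,\Ph w) \lesssim \ah(w,w)$ (via $\norm{\ah}{\IHh\Ph w} \leq (1+C_B)\norm{\ah}{w}$ and Cauchy--Schwarz), which keeps the induction in absolute value consistent.

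\emph{Smoothing step.} Using Assumption~\ref{ass:smoothing} and the spectral calculus of the symmetrised smoother, we bound
\[
  \h^2\norm{\ell}{\Ah G_\ell^m\lambda}^2 \leq \frac{C_S}{m}\bigl(\ah(\lambda,\lambda) - \ah(G_\ell^m\lambda,G_\ell^m\lambda)\bigr) .
\]
This is the classical argument. The operator $\bar G = G_\ell^*G_\ell$ is $\ah$-self-adjoint with spectrum in $[0,1]$, and \eqref{eq:ass-smoothing} rewrites as
\[
  \h^2\norm{\ell}{\Ah\mu}^2 \leq C_S\,\ah((\identity-\bar G)\mu,\mu) .
\]
Applying this to $\mu = G_\ell^{m}\lambda$, and using $\ah(\bar G^{j}(\identity-\bar G)\cdot,\cdot)$ monotone in $j$ together with a telescoping sum, yields the factor $1/m$. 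Writing $x = \ah(w,w)/\ah(\lambda,\lambda) \in [0,1]$ and inserting both steps,
\[
  \frac{\ah(E_\ell\lambda,\lambda)}{\ah(\lambda,\lambda)} \leq (1-\delta)\frac{C_RC_S}{m}(1-x) + \delta x .
\]
The right-hand side is at most $\delta$ exactly when $(1-\delta)C_RC_S/m \leq \delta$, i.e.\ $\delta \geq C_RC_S/(m + C_RC_S)$. The stated $\delta = C_RC_S/(m - C_RC_S)$ is larger, and the extra slack comes from absorbing the lower-bound term handled through Assumption~\ref{ass:bounded}. That absorption is why $m > 2C_RC_S$ is required for $\delta < 1$.

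\emph{Main obstacle.} The delicate step is the sign issue in the non-inherited setting: carrying the induction with the two-sided estimate $|\ah(E_\ell\lambda,\lambda)| \leq \delta\,\ah(\lambda,\lambda)$ rather than a one-sided one. We would follow \cite[Theorem 3.1]{Duan2007AGB} directly: first obtain an upper bound as above and a matching lower bound $-\delta\,\ah(\lambda,\lambda)$, then use Assumption~\ref{ass:bounded} to control the negative part of $\ah((\identity-\IHh\Ph)w,w)$ by the same smoothing quantity. In practice, since the statement is quoted verbatim from \cite{Duan2007AGB}, it suffices to verify that our $\Ah$, $\IhH$, $\Ph$ and $\Kh$ match their abstract definitions. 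That holds by Lemma~\ref{lem:inner-product}, \eqref{eq:restriction}, \eqref{eq:coarse-projection} and \eqref{eq:symmetrised-smoother}, and the cited theorem then applies.
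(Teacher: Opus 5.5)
Your upper bound is correct, but the half of the two-sided induction that is specific to the non-inherited setting is never carried out, and the mechanism you suggest for it fails. Your opening claim $0\leq\ah(E_\ell\lambda,\lambda)$ also has to be dropped, as you later concede. Write $\tau\doteq\ah((\identity-\IHh\Ph)w,w)$. Assumption~\ref{ass:bounded} only gives $\tau\geq -C_B\,\ah(w,w)$, hence $\ah(E_\ell\lambda,\lambda)\geq-\big((1+\delta)C_B+\delta\big)\ah(w,w)$. This constant exceeds $\delta$ and does not improve with $m$. Meanwhile $\ah(w,w)$ can be arbitrarily close to $\ah(\lambda,\lambda)$ on components the smoother barely damps, so this route never yields $-\delta\,\ah(\lambda,\lambda)$. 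It is also not where the stated $\delta$ or the threshold $m>2C_RC_S$ come from.

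The missing step is the mirror image of your upper bound. Since $\aH(\Ph w,\Ph w)=\ah(w,w)-\tau\geq0$, the lower half of the induction hypothesis gives
\[
  \ah(E_\ell\lambda,\lambda)\geq\tau-\delta\,\aH(\Ph w,\Ph w)=(1+\delta)\tau-\delta\,\ah(w,w)
  \geq-\Big[(1+\delta)\frac{C_RC_S}{m}(1-x)+\delta x\Big]\ah(\lambda,\lambda).
\]
Here the negative part of $\tau$ is controlled by the absolute value in \eqref{eq:ass-regularity} together with your smoothing bound. The right-hand side is at least $-\delta\,\ah(\lambda,\lambda)$ as soon as $(1+\delta)C_RC_S/m\leq\delta$, i.e.\ $\delta\geq C_RC_S/(m-C_RC_S)$, which is exactly the stated value. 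This $\delta$ is $<1$ precisely when $m>2C_RC_S$, and it exceeds $C_RC_S/(m+C_RC_S)$, so your upper bound closes with the same $\delta$. Assumption~\ref{ass:bounded} is not used at all.

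For comparison, the paper gives no proof of its own. It invokes \cite[Theorem 3.1]{Duan2007AGB} and, in Remark~\ref{rem:lambda-normalisation}, justifies replacing $1/\eigenmax$ by $\h^2$ in both hypotheses: the eigenvalue enters the cited argument only through $\norm{\ell}{\Ah\lambda}^2/\eigenmax$. Your fallback of checking that $\Ah$, $\IhH$, $\Ph$, $\Kh$ match the abstract definitions misses exactly this point. \eqref{eq:ass-smoothing} implies the cited smoothing hypothesis only if $\eigenmax\gtrsim\h^{-2}$, which is not proved.

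Once completed as above, your induction is a self-contained alternative that sidesteps the issue. The factor $\h^2$ produced by \eqref{eq:ass-regularity} is consumed directly by \eqref{eq:ass-smoothing}. It also shows that only Assumptions~\ref{ass:smoothing} and \ref{ass:regularity-approx} are needed. Note that your telescoping step uses that $G_\ell$ is $\ah$-self-adjoint, i.e.\ $\Sh'=\Sh$, which holds for the patch smoothers. For a nonsymmetric $\Sh$ one must alternate $\Sh$ and $\Sh'$ within the sweeps, as in \cite{Bramble1991}.
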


\begin{theorem}[Variable V-cycle, {\cite[Theorem 6]{Bramble1991}}]\label{th:variable-vcycle}
  Let Assumptions~\ref{ass:smoothing} and \ref{ass:regularity-approx} hold and let $m_\ell$ satisfy \eqref{eq:smoothing-steps-condition}. Then
  \begin{equation} \label{eq:variable-vcycle}
    \eta_0 \, \ah(\lambda,\lambda) \leq \ah(\Bh \Ah \lambda, \lambda) \leq \eta_1 \, \ah(\lambda,\lambda) \qquad \forall \lambda \in \Uhb,
  \end{equation}
  with $\eta_0 \geq m_\ell / (M + m_\ell)$ and $\eta_1 \leq (M + m_\ell)/m_\ell$, and $M$ independent of $\ell$.
\end{theorem}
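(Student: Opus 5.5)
The statement is the variable V-cycle result of \cite[Theorem 6]{Bramble1991}, transplanted to the non-inherited setting of \cite{Duan2007AGB}. I would prove it by following Bramble--Pasciak--Xu. The plan has three parts: derive a one-level recursion for the error operator from \eqref{eq:vcycle-error}, control that recursion using only Assumptions~\ref{ass:smoothing} and \ref{ass:regularity-approx}, and sum the resulting bounds using the geometric growth \eqref{eq:smoothing-steps-condition}.

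\textbf{Reduction to a recursion.} Write $E_\ell \doteq \identity - \Bh \Ah$, and let $\tilde E_\ell \doteq \identity - \IHh \BH \IhH \Ah$ be the coarse-correction factor. From \eqref{eq:restriction} and \eqref{eq:coarse-projection} we have $\IhH \Ah = \Al{\ell-1} \Ph$. Hence
\begin{equation}
\tilde E_\ell = (\identity - \IHh \Ph) + \IHh E_{\ell-1} \Ph .
\end{equation}
Then \eqref{eq:vcycle-error} reads $E_\ell = (\identity-\Sh'\Ah)^{m_\ell}\tilde E_\ell(\identity-\Sh\Ah)^{m_\ell}$. This factor is $\ah$-symmetric, so $\ah(E_\ell\lambda,\lambda)=\ah(\tilde E_\ell \mu,\mu)$ with $\mu=(\identity-\Sh\Ah)^{m_\ell}\lambda$. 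Expanding gives
\begin{equation}
\ah(E_\ell\lambda,\lambda) = \ah\big((\identity-\IHh\Ph)\mu,\mu\big) + \aH(E_{\ell-1}\Ph\mu,\Ph\mu).
\end{equation}

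\textbf{Bounding each level.} I would argue inductively. Suppose $|\aH(E_{\ell-1}\nu,\nu)| \le (M/m_{\ell-1})\,\aH(\nu,\nu)$, with $\aH(\Ph\mu,\Ph\mu)= \ah(\mu,\IHh\Ph\mu)$. The first term is bounded by Assumption~\ref{ass:regularity-approx}, giving $C_R\h^2\norm{\ell}{\Ah\mu}^2$. Assumption~\ref{ass:smoothing} then converts $\h^2\norm{\ell}{\Ah\mu}^2$, through the standard spectral estimate for $K_\ell$, into the bound
\begin{equation}
\h^2\norm{\ell}{\Ah(\identity-\Sh\Ah)^{m}\lambda}^2 \lesssim C_S\, m^{-1}\big(\ah(\lambda,\lambda)-\ah(\bar\lambda,\bar\lambda)\big),
\end{equation}
where $\bar\lambda$ denotes the smoothed iterate. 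This is the usual estimate $\sum_j \ah(\text{smoothed}_j)$ telescoping, from \cite{Bramble1991}.

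The key difficulty is that the coarse form is not inherited, so $\IHh\Ph$ is not an $\ah$-projection. Without Assumption~\ref{ass:bounded} the norm $\aH(\Ph\mu,\Ph\mu)$ could exceed $\ah(\mu,\mu)$. I would handle this as the variable V-cycle theory does: write
\begin{equation}
\aH(\Ph\mu,\Ph\mu)=\ah(\mu,\mu)-\ah\big((\identity-\IHh\Ph)\mu,\mu\big),
\end{equation}
and bound the excess again by Assumption~\ref{ass:regularity-approx}, hence by $C_RC_S\,m_\ell^{-1}\ah(\lambda,\lambda)$. The growth of this excess across levels is then compensated by $m_{\ell-1}\ge\rho_1 m_\ell$. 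This gives
\begin{equation}
|\ah(E_\ell\lambda,\lambda)|\le\Big(\frac{C}{m_\ell}+\frac{M}{m_{\ell-1}}\big(1+\frac{C}{m_\ell}\big)\Big)\ah(\lambda,\lambda).
\end{equation}
Choosing $M$ large relative to $C/(1-\rho_1^{-1})$ closes the induction as $M/m_\ell$ uniformly in $\ell$. The upper bound on $\rho_2$ is used only to keep the total work finite. This bookkeeping is the main obstacle, and I would copy Bramble--Pasciak--Xu's argument, tracking the $(1+C/m_\ell)$ factors through a convergent product $\prod(1+C\rho_1^{-j}/m_L)$.

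\textbf{Conclusion.} From $|\ah(E_\ell\lambda,\lambda)|\le (M/m_\ell)\ah(\lambda,\lambda)$ and $\ah(\Bh\Ah\lambda,\lambda)=\ah(\lambda,\lambda)-\ah(E_\ell\lambda,\lambda)$ we obtain both bounds of \eqref{eq:variable-vcycle}. The constants are $1-M/m_\ell$ and $1+M/m_\ell$; these are sharpened to $m_\ell/(M+m_\ell)$ and $(M+m_\ell)/m_\ell$ by the standard rearrangement, which inverts the lower bound using the positivity of $\Bh$, itself following from the same induction.
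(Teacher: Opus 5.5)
The paper gives no proof of its own here. It cites \cite[Theorem 6]{Bramble1991}, with Remark~\ref{rem:lambda-normalisation} justifying $\h^2$ in place of $1/\lambda_{\max}$, so you are reconstructing the Bramble--Pasciak--Xu argument. Much of your reconstruction is right:
\begin{itemize}
\item the identity $\IhH\Ah = A_{\ell-1}\Ph$;
\item the splitting $\ah(E_\ell\lambda,\lambda) = \ah((\identity-\IHh\Ph)\mu,\mu) + \aH(E_{\ell-1}\Ph\mu,\Ph\mu)$;
\item the bound $|\ah((\identity-\IHh\Ph)\mu,\mu)| \le \frac{C}{m_\ell}[\ah(\lambda,\lambda)-\ah(\mu,\mu)]$ with $C = C_RC_S$, obtained from the spectral smoothing estimate.
\end{itemize}
Your two-sided recursion $1+\tau_\ell \le (1+C/m_\ell)(1+\tau_{\ell-1})$ does deliver $\eta_1 \le (M+m_\ell)/m_\ell$, but only through the unrolled product. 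A fixed hypothesis $\tau_{\ell-1} \le M/m_{\ell-1}$ cannot be propagated when $m_\ell \le C/(\rho_1-1)$ and $m_{\ell-1} = \rho_1 m_\ell$, whatever $M$, so "choosing $M$ large" does not close it. The bound $\prod_{j\le\ell}(1+C/m_j) \le \exp\big(C\rho_1/((\rho_1-1)m_\ell)\big)$ is what yields a uniform, if exponentially large, $M$.

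The genuine gap is the lower bound $\eta_0$. From $|\ah(E_\ell\lambda,\lambda)| \le (M/m_\ell)\ah(\lambda,\lambda)$ you only get $\eta_0 \ge 1 - M/m_\ell$. This is vacuous precisely in the regime $m_\ell \le M$ that the theorem must cover, e.g.\ $m_L = 1$ with your large $M$. No rearrangement turns this into $m_\ell/(M+m_\ell)$. Positivity of $\Bh$ is true, but it comes from the additive structure of $\Bh$ (a positive smoothing term plus a nonnegative coarse term), not from your induction. In any case it gives only $\eta_0 > 0$, with no uniform bound.

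The missing idea is a separate one-sided induction on $\delta_\ell \doteq \sup_{\lambda\neq 0}\ah(E_\ell\lambda,\lambda)/\ah(\lambda,\lambda)$ that keeps the sign information your absolute values discard. Using $\aH(E_{\ell-1}\Ph\mu,\Ph\mu) \le \delta_{\ell-1}\aH(\Ph\mu,\Ph\mu) = \delta_{\ell-1}[\ah(\mu,\mu) - \ah((\identity-\IHh\Ph)\mu,\mu)]$ gives
\[
  \ah(E_\ell\lambda,\lambda) \le (1-\delta_{\ell-1})\,\ah((\identity-\IHh\Ph)\mu,\mu) + \delta_{\ell-1}\,\ah(\mu,\mu),
\]
with $0 \le \delta_{\ell-1} < 1$ by induction. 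The quantity $\ah((\identity-\IHh\Ph)\mu,\mu)$ is bounded in two ways:
\begin{itemize}
\item by $\frac{C}{m_\ell}[\ah(\lambda,\lambda)-\ah(\mu,\mu)]$, from the smoothing estimate;
\item by $\ah(\mu,\mu)$, because $\aH(\Ph\mu,\Ph\mu)\ge 0$ even when the coarse form is not inherited.
\end{itemize}
Maximising over the split of $\ah(\lambda,\lambda)$ into $\ah(\mu,\mu)$ and the smoothing decrement gives $\delta_\ell \le \max\{C/(C+m_\ell),\,\delta_{\ell-1}\}$. With $\delta_0 = 0$ and $m_{\ell-1}\ge m_\ell$, this yields $\eta_0 \ge m_\ell/(C_RC_S+m_\ell)$. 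Without this second estimate, the half of \eqref{eq:variable-vcycle} that makes $\Bh$ a uniform preconditioner is not established.
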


\begin{remark}[Normalisation]\label{rem:lambda-normalisation}
  The hypotheses of \cite{Duan2007AGB,Bramble1991} carry $1/\eigenmax$ where \eqref{eq:ass-smoothing} and \eqref{eq:ass-regularity} carry $\h^2$. The eigenvalue enters their proofs only through the quotient $\norm{\ell}{\Ah\lambda}^2 / \eigenmax$, which the regularity and approximation hypothesis produces and the smoothing hypothesis bounds immediately afterwards, at \cite[(3.9)--(3.10)]{Bramble1991} and at \cite[(7)--(8)]{Duan2007AGB}. Any positive sequence may therefore be substituted in the two hypotheses at once, and $\h^{-2}$ is the natural choice by \eqref{eq:max-eigenvalue}. Stated with $1/\eigenmax$, \eqref{eq:ass-smoothing} would instead require the lower bound $\eigenmax \gtrsim \h^{-2}$, which we do not prove.
\end{remark}

\begin{remark}[Nonsymmetric smoothers]\label{rem:symmetric-smoothers}
  The framework admits a nonsymmetric smoother, alternating $\Sh$ and $\Sh'$ so that $\Bh$ stays self-adjoint. For the smoothers of \sect{sec:smoother-def} one has $\Sh' = \Sh$ and $\Kh = 2\Sh - \Sh \Ah \Sh$. Keeping $\Sh'$ in \eqref{eq:symmetrised-smoother} costs nothing and covers a multiplicative patch sweep, a forward sweep before and a backward sweep after the coarse correction being such an adjoint pair.
\end{remark}

\subsection{Reduction to the ingredients}\label{sec:framework-roadmap}

Assumptions~\ref{ass:smoothing}--\ref{ass:regularity-approx} are consequences of the following properties of the prolongation and of the smoother.

\begin{assumption}[Prolongation]\label{ass:prolongation}
   For all $\mu \in \UHb$ and all $z \in H^1_0(\dom) \cap H^2(\dom)$,
  \begin{subequations} \asssubnum \label{eq:ass-prolongation}
  \begin{alignat}{2}
    \norm{\ah}{\IHh \mu} &\leq C_I \norm{\aH}{\mu},
      &\qquad \norm{\ell}{\IHh \mu} &\lesssim \norm{\ell-1}{\mu}, \label{eq:ass-prolongation-stab} \\
    \norm{\uah}{\ul{\mathcal{J}}_\ell z - \ul{I}_\ell \ul{\mathcal{J}}_{\ell-1} z} &\lesssim \H \seminorm{H^2(\dom)}{z},
      &\qquad \norm{L^2(\dom)}{\iscH \mu - \isch \IHh \mu} &\lesssim \H \norm{\aH}{\mu}, \label{eq:ass-prolongation-first} \\
    \norm{\ell}{\lproj{\Fh}{\k} z - \IHh \lproj{\FH}{\K} z} &\lesssim \H^2 \seminorm{H^2(\dom)}{z} . &&
      \label{eq:ass-prolongation-approx}
  \end{alignat}
  \end{subequations}
\end{assumption}
\begin{assumption}[Smoother]\label{ass:smoother}
  The patch operator $\Dh \doteq \omega^{-1} \Sh \Ah$ satisfies $\rho(\Dh) \leq c_0$, and every $\lambda \in \Uhb$ admits a decomposition $\lambda = \sum_{r \in \mathcal{D}} \lambda_r$ with $\lambda_r \in N_r$ and
  {\assnum
  \begin{equation} \label{eq:ass-smoother}
    \sum_{r \in \mathcal{D}} \norm{A_r}{\lambda_r}^2 \leq c_1 \h^{-2} \norm{\ell}{\lambda}^2 .
  \end{equation}}
\end{assumption}

The first relation of \eqref{eq:ass-prolongation-stab} is energy stability of the prolongation and the second its counterpart in the skeletal norms. The bounds \eqref{eq:ass-prolongation-first} are first-order consistency estimates, and \eqref{eq:ass-prolongation-approx} is their second-order counterpart.

Assumption~\ref{ass:prolongation} is established for both prolongations of \sect{sec:prolongation} in \sect{sec:intergrid}, and Assumption~\ref{ass:smoother} for the patch families of \sect{sec:smoother-def} in \sect{sec:smoother}, where it is also shown to imply Assumption~\ref{ass:smoothing}. Assumptions~\ref{ass:bounded} and \ref{ass:regularity-approx} are derived from Assumption~\ref{ass:prolongation} in \sect{sec:main-result}, which combines them with Theorems~\ref{th:vcycle} and \ref{th:variable-vcycle}. The route from  \eqref{eq:ass-prolongation-approx} to Assumption~\ref{ass:regularity-approx} follows \cite[Section 4]{Duan2007AGB}, except for their hypothesis H5), that every level norm be equivalent to a single $L^2(\dom)$ norm. It fails for \eqref{def:skeleton-inner-product}, whose face term lives on level-dependent skeletons, so in \sect{sec:main-result} the two terms of \eqref{def:skeleton-inner-product} are treated separately, the face one through \eqi{eq:face-bounds}{1} at the cost of one factor $\h$ against the energy.

\section{Analysis of the intergrid transfer operators}\label{sec:intergrid}

This section proves Assumption~\ref{ass:prolongation} for $\IUh$ and $\IRh$. Both are of the form $\Pavh \Theta_{\ell-1}$ with $\Theta_{\ell-1} : \UHb \to \Pk{\K+1}(\TH)$ a coarse reconstruction, so the bounds are first established for an arbitrary such $\Theta_{\ell-1}$ and then verified for the two choices.

\subsection{An averaging operator}\label{sec:averaging}

The prolongations project a broken coarse polynomial onto the fine faces. The following operator carries the same information into the whole hybrid space and is the workhorse of the section: for $z = (z_T)_{T \in \TH} \in H^1(\TH)$, let $\ul{\mathcal{W}}_\ell z \in \Uh$ have components
\begin{equation} \label{def:Wh}
  (\ul{\mathcal{W}}_\ell z)|_t \doteq \lproj{t}{\k+1} z_T \quad (t \in \Th(T)), \qquad
  \ul{\mathcal{W}}_\ell z = \big( \lproj{\Th}{\k+1} z, \Pavh z \big),
\end{equation}
its face component being $\Pavh z$ of \eqref{def:Pi-av}. It differs from $\ulisch \Pavh z$ only in the cell component, where the latter takes the discrete-harmonic extension instead of the local projection.

On each coarse cell, $\ul{\mathcal{W}}_\ell$ is a fine interpolation plus a skeletal correction that sees only the jumps of $z$:
\begin{equation}\label{def:WhT-decomp}
  (\ul{\mathcal{W}}_\ell z)|_T = \ul{\mathcal{J}}_\ell z_T + \ul{\xi}_T, \qquad \ul{\xi}_T = (0, \xi_T),
\end{equation}
where, by \eqref{def:Pi-av} and \eqref{eq:jump},
\begin{equation} \label{def:xiT}
  \xi_{T,F} =
  \begin{cases}
    0 & F \in \Fhi \setminus \FH, \\
    \alpha_{FT'} \lproj{F}{\k}(z_{T'} - z_T) & F \subset \partial T \cap \partial T', \ T' \in \TH, \\
    -\lproj{F}{\k} z_T & F \subset \partial T \cap \partial\dom,
  \end{cases}
  \qquad \text{so that} \qquad
  \norm{F}{\xi_{T,F}} \leq \norm{F}{\jump{z}} ,
\end{equation}
the last bound by the $L^2(F)$-contractivity of $\lproj{F}{\k}$ and $\alpha_{FT'} \leq 1$.

\begin{lemma}[Stability of the averaging operator]\label{lem:Wh-stability}
  For all $z \in H^1(\TH)$,
  \begin{equation} \label{eq:Wh-stability}
    \norm{\ul{1,\ell}}{\ul{\mathcal{W}}_\ell z}^2 \lesssim \seminorm{H^1(\TH)}{z}^2 + \h^{-1} \norm{\FH}{\jump{z}}^2 .
  \end{equation}
\end{lemma}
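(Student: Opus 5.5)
The natural route is the decomposition \eqref{def:WhT-decomp}. Fix a coarse cell $T \in \TH$, bound the restriction of $\norm{\ul{1,\ell}}{\ul{\mathcal{W}}_\ell z}^2$ to the fine cells $t \in \Th(T)$, and then sum over $T$. Since $\ul{\mathcal{W}}_\ell z|_T = \ul{\mathcal{J}}_\ell z_T + \ul{\xi}_T$ and the HHO norm \eqref{def:hho-norm} is a seminorm, the triangle inequality gives
\[
  \norm{\ul{1,\ell},T}{\ul{\mathcal{W}}_\ell z}^2 \lesssim \norm{\ul{1,\ell},T}{\ul{\mathcal{J}}_\ell z_T}^2 + \norm{\ul{1,\ell},T}{\ul{\xi}_T}^2 .
\]
By \eqref{eq:interpolation-bounded} of Lemma~\ref{lem:interpolation}, applied with $v = z_T \in H^1(T)$, the first term is bounded by $\seminorm{1,T}{z_T}^2$. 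Summing over $T$ gives $\seminorm{H^1(\TH)}{z}^2$.

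For the correction, $\ul{\xi}_T = (0,\xi_T)$ has zero cell component, so on each $t \in \Th(T)$ only the face part survives:
\[
  \norm{\ul{1,t}}{\ul{\xi}_t}^2 = h_t^{-1} \norm{\partial t}{\xi_T}^2 .
\]
By \eqref{def:xiT}, $\xi_T$ vanishes on fine faces interior to $T$. It is nonzero only on fine faces $F \subset \partial T$, which lie either on a coarse interior face shared with some $T'$ or on $\partial\dom$. On each such face the pointwise bound $\norm{F}{\xi_{T,F}} \le \norm{F}{\jump{z}}$ holds, where on boundary faces the jump \eqref{eq:jump} is the trace $z_T$ itself. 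Each such fine face belongs to exactly one fine cell $t \subset T$, and $h_t^{-1} \simeq \h^{-1}$ by \eqi{eq:geometry-b}{2}. Summing over $t \in \Th(T)$ therefore gives
\[
  \sum_{t\in\Th(T)} \norm{\ul{1,t}}{\ul{\xi}_t}^2 \lesssim \h^{-1} \norm{\partial T}{\jump{z}}^2 .
\]

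It remains to sum over $T \in \TH$. Each coarse face is counted at most twice, once from each side, so this gives $\h^{-1}\norm{\FH}{\jump{z}}^2$ and completes \eqref{eq:Wh-stability}.

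The main point to check is the use of \eqref{eq:interpolation-bounded} for a $z_T$ that need not vanish on $\partial\dom$. That bound was proved on a single coarse cell for arbitrary $v\in H^1(T)$, so it applies. The boundary trace is handled separately: the interpolant carries $\lproj{F}{\k} z_T$ on boundary faces, and $\xi_T$ cancels it so that the sum has zero boundary value as an element of $\Uhb$. The only other subtlety is that a fine face on $\partial T$ lies in exactly one fine cell of $T$, which follows from the disjointness of fine cells. No inverse inequality is needed, so I do not expect a genuine obstacle here.
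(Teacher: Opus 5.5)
Your proposal is correct and follows the paper's proof essentially step by step. Both split via \eqref{def:WhT-decomp} and bound the interpolant part by \eqref{eq:interpolation-bounded}. Both control $\ul{\xi}_T$ through its face component alone, using quasi-uniformity and \eqref{def:xiT}, and then sum over coarse cells with each coarse face counted at most twice.
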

\begin{proof}
  By \eqref{def:WhT-decomp} the norm splits into a contribution of $\ul{\mathcal{J}}_\ell z_T$, bounded by $\seminorm{1,T}{z_T}$ through \eqref{eq:interpolation-bounded}, and one of $\ul{\xi}_T$. The latter has zero cell component and vanishes on the fine faces interior to $T$, so by \eqref{def:hho-norm}, quasi-uniformity and \eqref{def:xiT},
  \begin{equation} \label{eq:xiT-norm-bound}
    \norm{\ul{1,\ell},T}{\ul{\xi}_T}^2 = \sum_{t \in \Th(T)} h_t^{-1} \norm{\partial t}{\xi_T}^2
    \simeq \h^{-1} \sum_{F \subset \partial T} \norm{F}{\xi_{T,F}}^2
    \leq \h^{-1} \sum_{F \subset \partial T} \norm{F}{\jump{z}}^2 .
  \end{equation}
  Summing over $T \in \TH$, the fine faces on $\partial T$ tile the coarse faces of $T$ and each is counted at most twice.
\end{proof}

\subsection{Bubble bound for coarse polynomials}\label{sec:coarse-bubble}

\begin{lemma}[Coarse polynomial bubble bound]\label{lem:polynomial-bubble}
  For $z \in \Pk{\K+1}(\TH)$,
  \begin{subequations} \label{eq:polynomial-bubble}
  \begin{align}
    \norm{\uah}{\ul{\mathcal{W}}_\ell z - \ulisch \Pavh z} &\lesssim \seminorm{H^1(\TH)}{z} + \h^{-1/2} \norm{\FH}{\jump{z}}, \label{eq:polynomial-bubble-1} \\
    \norm{\uah}{\ul{\mathcal{W}}_\ell z - \ulisch \Pavh z} &\lesssim \h \seminorm{H^2(\TH)}{z} + \h^{-1/2} \norm{\FH}{\jump{z}} . \label{eq:polynomial-bubble-2}
  \end{align}
  \end{subequations}
\end{lemma}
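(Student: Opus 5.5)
The difference $\ul{\delta}_\ell \doteq \ul{\mathcal{W}}_\ell z - \ulisch \Pavh z$ has zero face component: both pairs carry $\Pavh z$ on the skeleton. So it is a bubble $(\delta_\ell,0)$, and $\ulisch \Pavh z$ is discrete harmonic, so $\uah(\ulisch \Pavh z, \ul{\delta}_\ell) = 0$ by \eqref{eq:inverse-static-condensation-a}. We follow the proof of \eqref{eq:bubble-interp} and write
\[
  \norm{\uah}{\ul{\delta}_\ell}^2 = \uah(\ul{\mathcal{W}}_\ell z, \ul{\delta}_\ell).
\]
Equivalently, since static condensation is energy minimal cell by cell, $\norm{\uah}{\ul{\delta}_\ell} \leq 2\norm{\uah}{\ul{\mathcal{W}}_\ell z - \ul{v}}$ for any $\ul{v}$ with face component $\Pavh z$ whose cell part is a bubble correction. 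We use the orthogonality identity in the form $\norm{\uah}{\ul{\delta}_\ell} \le \sup$ over bubbles of $\uah(\ul{\mathcal{W}}_\ell z,\cdot)/\norm{\uah}{\cdot}$.

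\textbf{Bound \eqref{eq:polynomial-bubble-1}.} Cauchy--Schwarz gives $\norm{\uah}{\ul{\delta}_\ell} \leq \norm{\uah}{\ul{\mathcal{W}}_\ell z}$. Then \eqref{eq:local-continuity} together with Lemma~\ref{lem:Wh-stability} yields the claim directly.

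\textbf{Bound \eqref{eq:polynomial-bubble-2}.} Use the splitting \eqref{def:WhT-decomp} on each coarse cell: $\uah(\ul{\mathcal{W}}_\ell z, \ul{\delta}_\ell) = \sum_T \uah(\ul{\mathcal{J}}_\ell z_T, \ul{\delta}_\ell)|_T + \uah(\ul{\xi}, \ul{\delta}_\ell)$.

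\emph{Jump part.} The $\ul{\xi}$ term is bounded by $\norm{\ul{1,\ell}}{\ul{\xi}}\norm{\uah}{\ul{\delta}_\ell}$. By \eqref{eq:xiT-norm-bound}, $\norm{\ul{1,\ell}}{\ul{\xi}} \lesssim \h^{-1/2}\norm{\FH}{\jump{z}}$.

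\emph{Interpolation part.} For the $\ul{\mathcal{J}}_\ell z_T$ terms, $z_T$ is a polynomial on $T$, hence smooth on the union of the fine cells $\Th(T)$. We repeat the consistency computation of Lemma~\ref{lem:consistency} locally on $T$: integrate by parts on each fine $t\subset T$ and use \eqref{eq:elliptic-identity}. Because the test function is a bubble ($v_{\partial t}=0$), this gives
\[
  \uah(\ul{\mathcal{J}}_\ell z_T,\ul{\delta}_\ell)|_T = \sum_{t\in\Th(T)}\Big[(-\Delta z_T,\delta_t)_t - \big(\nabla(z_T-\lproj{t}{1,\k+1}z_T)\cdot\bm{n},\,\delta_t\big)_{\partial t} + h_t^{-1}\big(r_t(\ul{\mathcal{J}}_{\ell,t}z_T),\,r_t(\ul{\delta}_t)\big)_{\partial t}\Big],
\]
up to sign conventions in the last two terms. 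Each term is then bounded as follows.
\begin{itemize}
  \item[(i)] $|(\Delta z_T,\delta_t)_t| \le \seminorm{H^2(t)}{z}\norm{t}{\delta_t}$, and \eqref{eq:bubble-l2} gives $\norm{\Th}{\delta_\ell} \lesssim \h\norm{\uah}{\ul{\delta}_\ell}$.
  \item[(ii)] The normal-derivative term uses \eqref{eq:approximation-face} with $s=2$, together with $h_t^{-1/2}\norm{\partial t}{\delta_t}\lesssim\norm{\uah,t}{\ul{\delta}_t}$, which follows from \eqref{eq:local-continuity}.
  \item[(iii)] The stabilisation term uses \eqi{eq:approximation-bulk}{2} with $s=2$.
\end{itemize}
Each of (i)--(iii) contributes $\lesssim \h\seminorm{H^2(T)}{z}\norm{\uah,T}{\ul{\delta}_\ell}$. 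Summing over $T$, applying Cauchy--Schwarz and dividing by $\norm{\uah}{\ul{\delta}_\ell}$ concludes.

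\textbf{Main obstacle.} The delicate step is the local integration by parts on a fine cell for a test bubble. One must check that the face term in \eqref{eq:reconstruction-a} reduces to $-(\delta_t,\nabla w\cdot\bm n)_{\partial t}$. One must also check that no interface cancellation is needed, so that working cell by cell inside each coarse cell is legitimate. It is, because the bubble has no face component and all terms are purely local to each fine cell.
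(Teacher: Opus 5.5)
Your proof is correct. For \eqref{eq:polynomial-bubble-2} it follows the paper's skeleton: bubble orthogonality, the splitting \eqref{def:WhT-decomp}, the jump part through \eqref{eq:xiT-norm-bound}, and \eqref{eq:bubble-l2} on the Laplacian term. For \eqref{eq:polynomial-bubble-1} you take a genuine shortcut. From $\norm{\uah}{\ul{\delta}_\ell}^2 = \uah(\ul{\mathcal{W}}_\ell z, \ul{\delta}_\ell)$ you get $\norm{\uah}{\ul{\delta}_\ell} \leq \norm{\uah}{\ul{\mathcal{W}}_\ell z}$, and you conclude with \eqref{eq:local-continuity} and Lemma~\ref{lem:Wh-stability}. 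The paper instead reduces the interpolation part to $\sum_{t \in \Th} (\nabla z, \nabla \rec{\ell,t}{\k+1} \ul{\delta}_t)_t$ and applies Cauchy--Schwarz there. Your route is shorter and holds for every $z \in H^1(\TH)$. The same orthogonality gives $\norm{\uah}{\ul{\mathcal{W}}_\ell z}^2 = \norm{\ah}{\Pavh z}^2 + \norm{\uah}{\ul{\delta}_\ell}^2$, so it also yields \eqref{eq:Pav-stability} at once.

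For \eqref{eq:polynomial-bubble-2}, you do not use that $z|_t \in \Pk{\k+1}(t)$ on every fine cell. That gives $\lproj{t}{1,\k+1} z = z$ and $r_t(\ul{\mathcal{J}}_{\ell,t} z) = \lproj{\partial t}{\k}(z - \lproj{t}{\k+1} z) = 0$. So your terms (ii) and (iii) vanish identically, and what remains is exactly the paper's identity \eqref{eq:Wh-ibp}. Your estimates of (ii) and (iii) through \eqref{eq:approximation-face} and \eqi{eq:approximation-bulk}{2} are still valid, since the computation against a bubble is cell-local as you say. The extra work therefore proves the bound for all $z$ in the broken space $H^2(\TH)$. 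The paper has no use for this, as it applies the lemma only to $\lproj{\TH}{1,\K+1} z$ and to $\theta$, both piecewise polynomials.

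Two small corrections. First, the normal-derivative term enters with a plus sign; this is immaterial since you bound it in absolute value. Second, your remark that $\norm{\uah}{\ul{\delta}_\ell} \leq 2 \norm{\uah}{\ul{\mathcal{W}}_\ell z - \ul{v}}$ for every $\ul{v}$ with face component $\Pavh z$ is false as stated: take $\ul{v} = \ul{\mathcal{W}}_\ell z$. It plays no role in the argument, so delete it.
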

\begin{proof}
  The difference $\ul{\delta}_\ell \doteq \ul{\mathcal{W}}_\ell z - \ulisch \Pavh z$ has zero face component by \eqref{def:Wh}, and $\ulisch \Pavh z$ is discrete harmonic, so $\uah(\ulisch \Pavh z, \ul{\delta}_\ell) = 0$ by \eqref{eq:inverse-static-condensation-a} and thus
  \[
    \norm{\uah}{\ul{\delta}_\ell}^2 = \uah(\ul{\mathcal{W}}_\ell z, \ul{\delta}_\ell)
    \just{=}{\eqref{def:WhT-decomp}} \sum_{t \in \Th} \ual{\ell,t}(\ul{\mathcal{J}}_{\ell,t} z, \ul{\delta}_t) + \sum_{T \in \TH} \sum_{t \in \Th(T)} \ual{\ell,t}(\ul{\xi}_T|_t, \ul{\delta}_t) .
  \]
  In the first sum $z|_t \in \Pk{\k+1}(t)$, so $\lproj{t}{\k+1} z = z$. The face residual \eqref{def:face-residual} of $\ul{\mathcal{J}}_{\ell,t} z$ is then $\lproj{\partial t}{\k} z - \lproj{\partial t}{\k} \lproj{t}{\k+1} z = 0$, so the stabilisation contributes nothing, and \eqref{eq:elliptic-identity} gives $\rec{\ell,t}{\k+1} \ul{\mathcal{J}}_{\ell,t} z = \lproj{t}{1,\k+1} z = z$. What remains is $\sum_{t \in \Th} (\nabla z, \nabla \rec{\ell,t}{\k+1} \ul{\delta}_t)_t$. Testing \eqref{eq:reconstruction-a} with $w = z|_t \in \Pk{\k+1}(t)$ and integrating the resulting bulk term by parts, the two boundary terms cancel, $\ul{\delta}_t$ having zero face component:
  \begin{equation} \label{eq:Wh-ibp}
    \sum_{t \in \Th} (\nabla z, \nabla \rec{\ell,t}{\k+1} \ul{\delta}_t)_t = - \sum_{t \in \Th} (\delta_t, \Delta z)_t .
  \end{equation}
  Cauchy--Schwarz bounds the left-hand side by $\seminorm{H^1(\TH)}{z} \norm{\uah}{\ul{\delta}_\ell}$, and the right-hand side, with \eqref{eq:bubble-l2}, by $\h \seminorm{H^2(\TH)}{z} \norm{\uah}{\ul{\delta}_\ell}$. The second sum is bounded by $\norm{\ul{1,\ell},T}{\ul{\xi}_T} \norm{\ual{\ell,t}}{\ul{\delta}_t}$ through \eqref{eq:local-continuity}. Dividing by $\norm{\uah}{\ul{\delta}_\ell}$ and invoking \eqref{eq:xiT-norm-bound} gives \eqref{eq:polynomial-bubble-1} with the first choice and \eqref{eq:polynomial-bubble-2} with the second.
\end{proof}

\subsection{An abstract framework}\label{sec:abstract}

\begin{lemma}[Abstract bounds]\label{lem:abstract}
  Let $\Theta_{\ell-1} : \UHb \to \Pk{\K+1}(\TH)$ and set $\IHh \doteq \Pavh \Theta_{\ell-1}$. If, for all $\mu \in \UHb$,
  \begin{equation} \label{eq:theta-hyp}
    \seminorm{H^1(\TH)}{\Theta_{\ell-1} \mu} \lesssim \norm{\aH}{\mu}, \qquad
    \norm{\FH}{\jump{\Theta_{\ell-1}\mu}}^2 \lesssim \H \norm{\aH}{\mu}^2,
  \end{equation}
  then $\IHh$ satisfies \eqi{eq:ass-prolongation-stab}{1} and
  \begin{equation} \label{eq:abstract-l2}
    \norm{L^2(\dom)}{\Theta_{\ell-1} \mu - \isch \IHh \mu} \lesssim \H \norm{\aH}{\mu} .
  \end{equation}
  If moreover, for all $z \in H^1_0(\dom) \cap H^2(\dom)$,
  \begin{equation} \label{eq:theta-hyp-2}
    \seminorm{H^1(\TH)}{\lproj{\TH}{1,\K+1} z - \Theta_{\ell-1} \lproj{\FH}{\K} z}
    + \H^{-1/2} \norm{\FH}{\jump{\lproj{\TH}{1,\K+1} z - \Theta_{\ell-1} \lproj{\FH}{\K} z}} \lesssim \H \seminorm{H^2(\dom)}{z},
  \end{equation}
  then $\IHh$ satisfies \eqi{eq:ass-prolongation-first}{1}.
\end{lemma}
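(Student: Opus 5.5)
The plan is to route everything through the averaging operator $\ul{\mathcal{W}}_\ell$ of \eqref{def:Wh}, applied to $z \doteq \Theta_{\ell-1}\mu \in \Pk{\K+1}(\TH)$. Its face component is $\Pavh z = \IHh\mu$, so $\ulisch \IHh \mu = \ulisch \Pavh z$.

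For the energy stability \eqi{eq:ass-prolongation-stab}{1}, I use energy minimality of static condensation, $\norm{\ah}{\IHh\mu} = \norm{\uah}{\ulisch \Pavh z} \leq \norm{\uah}{\ul{\mathcal{W}}_\ell z}$. Here I rely on $\ulisch \Pavh z$ minimising $\uah$ over hybrid pairs with that face component, which is \eqref{eq:inverse-static-condensation-a}. By \eqref{eq:local-continuity} and Lemma~\ref{lem:Wh-stability},
\[
\norm{\uah}{\ul{\mathcal{W}}_\ell z}^2 \lesssim \seminorm{H^1(\TH)}{z}^2 + \h^{-1}\norm{\FH}{\jump{z}}^2 .
\]
The two hypotheses \eqref{eq:theta-hyp}, together with $\H\simeq\h$, then bound the right-hand side by $\norm{\aH}{\mu}^2$.

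For \eqref{eq:abstract-l2}, I split
\[
\Theta_{\ell-1}\mu - \isch\IHh\mu = (z - \lproj{\Th}{\k+1} z) + (\lproj{\Th}{\k+1}z - \isch \Pavh z).
\]
The first term vanishes, since $z|_t \in \Pk{\k+1}(t)$ for every fine cell inside a coarse one. The second is exactly the cell component of the bubble $\ul{\mathcal{W}}_\ell z - \ulisch\Pavh z$. So \eqref{eq:bubble-l2} bounds it by $\h\norm{\uah}{\ul{\mathcal{W}}_\ell z - \ulisch\Pavh z}$, and \eqref{eq:polynomial-bubble-1} followed by \eqref{eq:theta-hyp} gives $\lesssim \h\norm{\aH}{\mu} \lesssim \H\norm{\aH}{\mu}$.

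For \eqi{eq:ass-prolongation-first}{1}, note $\trH \ul{\mathcal{J}}_{\ell-1} z = \lproj{\FH}{\K} z$, so $\ul{I}_\ell\ul{\mathcal{J}}_{\ell-1}z = \ulisch \Pavh \Theta_{\ell-1}\lproj{\FH}{\K}z$. Writing $y \doteq \lproj{\TH}{1,\K+1}z$ and $\theta \doteq \Theta_{\ell-1}\lproj{\FH}{\K}z$, I insert intermediate terms and bound the following pieces separately.

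First, $\ul{\mathcal{J}}_\ell z - \ul{\mathcal{W}}_\ell y$. By \eqref{def:WhT-decomp} this equals $\ul{\mathcal{J}}_\ell(z-y) - \ul{\xi}$ on each coarse cell, where $\ul{\xi}$ is built from $\jump{y}$. Its $\norm{\ul{1,\ell}}{\cdot}$-norm is controlled by \eqref{eq:interpolation-jumps} and \eqref{eq:xiT-norm-bound}, giving $\lesssim \H\seminorm{H^2(\dom)}{z}$.

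Second, $\ul{\mathcal{W}}_\ell(y-\theta)$. Since $\ul{\mathcal{W}}_\ell$ is linear, Lemma~\ref{lem:Wh-stability} together with \eqref{eq:theta-hyp-2} bounds it by $\H\seminorm{H^2(\dom)}{z}$.

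Third, the bubble $\ul{\mathcal{W}}_\ell\theta - \ulisch\Pavh\theta$. I split it as $(\ul{\mathcal{W}}_\ell y - \ulisch\Pavh y)$ minus the same expression in $y-\theta$. The second part follows from \eqref{eq:polynomial-bubble-1} and \eqref{eq:theta-hyp-2}. The first part follows from \eqref{eq:polynomial-bubble-2}, which requires
\[
\seminorm{H^2(\TH)}{y} \lesssim \seminorm{H^2(\dom)}{z}
\]
(from \eqref{eq:approximation-bulk} with $m=2$, plus the triangle inequality) and $\h^{-1/2}\norm{\FH}{\jump{y}} \lesssim \H\seminorm{H^2}{z}$ from \eqref{eq:interpolation-jumps}.

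The main obstacle is this last bubble term. The first-order estimate \eqref{eq:polynomial-bubble-1} applied to $y$ would only give $O(1)$, so one must use \eqref{eq:polynomial-bubble-2} on $y$ and reserve \eqref{eq:polynomial-bubble-1} for the small difference $y-\theta$. A secondary care point is the boundary faces: $\ul{\mathcal{J}}_\ell z$ has zero trace there, $\Pavh$ sets it to zero, and $\xi$ encodes $-\lproj{F}{\k}y_T$, which the jump term in \eqref{eq:interpolation-jumps} already covers.
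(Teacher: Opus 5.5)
Your proposal is correct and follows essentially the paper's argument. It uses the same averaging operator $\ul{\mathcal{W}}_\ell$, the bubble bound \eqref{eq:bubble-l2} with \eqref{eq:polynomial-bubble-1} for \eqref{eq:abstract-l2}, and the same splitting through $\lproj{\TH}{1,\K+1} z$ with \eqref{eq:polynomial-bubble-2} reserved for the smooth part; your pieces $\ul{\mathcal{W}}_\ell(y-\theta)$ and its bubble combine into the paper's third term $\ulisch \Pavh (y-\theta)$, bounded via \eqref{eq:Pav-stability}, and the only deviation is that you obtain energy stability from minimality of static condensation, $\norm{\ah}{\Pavh z} \leq \norm{\uah}{\ul{\mathcal{W}}_\ell z}$, rather than from the triangle inequality with \eqref{eq:polynomial-bubble-1}, which is equally valid and slightly shorter.
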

\begin{proof}
  Write $z \doteq \Theta_{\ell-1}\mu$. By \eqref{eq:polynomial-bubble-1}, \eqref{eq:theta-hyp} and $\H \simeq \h$,
  \begin{equation} \label{eq:abstract-bubble}
    \norm{\uah}{\ul{\mathcal{W}}_\ell z - \ulisch \IHh \mu} \lesssim \norm{\aH}{\mu},
  \end{equation}
  while the triangle inequality with \eqref{eq:local-continuity}, \eqref{eq:Wh-stability} and \eqref{eq:polynomial-bubble-1} gives
  \begin{equation} \label{eq:Pav-stability}
    \norm{\ah}{\Pavh w} \lesssim \seminorm{H^1(\TH)}{w} + \h^{-1/2} \norm{\FH}{\jump{w}}
    \qquad \forall w \in \Pk{\K+1}(\TH) .
  \end{equation}
  Taking $w = \Theta_{\ell-1}\mu$ and using \eqref{eq:theta-hyp} with \eqi{eq:geometry-b}{3} gives \eqi{eq:ass-prolongation-stab}{1}. For \eqref{eq:abstract-l2}, $\IHh \mu = \Pavh z$ and $\lproj{\Th}{\k+1} z = z$, the latter since $z|_t \in \Pk{\k+1}(t)$, so \eqref{def:Wh} gives
  \[
    \ul{\mathcal{W}}_\ell z - \ulisch \IHh \mu = (z - \isch \IHh \mu, 0) ,
  \]
  and \eqref{eq:bubble-l2} with \eqref{eq:abstract-bubble} gives $\norm{L^2(\dom)}{z - \isch \IHh \mu} \lesssim \h \norm{\aH}{\mu}$.

  For the last claim set $z_{\ell-1} \doteq \lproj{\TH}{1,\K+1} z$ and split
  \[
    \norm{\uah}{\ul{\mathcal{J}}_\ell z - \ul{I}_\ell \ul{\mathcal{J}}_{\ell-1} z}
    \leq \norm{\uah}{\ul{\mathcal{J}}_\ell z - \ul{\mathcal{W}}_\ell z_{\ell-1}}
    + \norm{\uah}{\ul{\mathcal{W}}_\ell z_{\ell-1} - \ulisch \Pavh z_{\ell-1}}
    + \norm{\uah}{\ulisch \Pavh (z_{\ell-1} - \Theta_{\ell-1}\lproj{\FH}{\K} z)} .
  \]
  The first term is $\lesssim \H \seminorm{H^2(\dom)}{z}$ by \eqref{def:WhT-decomp}, \eqref{eq:local-continuity}, \eqref{eq:xiT-norm-bound} and \eqref{eq:interpolation-jumps}; the second by \eqref{eq:polynomial-bubble-2} with \eqi{eq:approximation-bulk}{1} and \eqref{eq:interpolation-jumps}; the third by \eqref{eq:Pav-stability} with $w = z_{\ell-1} - \Theta_{\ell-1}\lproj{\FH}{\K} z$, together with \eqref{eq:theta-hyp-2}.
\end{proof}

\subsection{Verification of the hypotheses}\label{sec:theta-verification}

It remains to check \eqref{eq:theta-hyp} and \eqref{eq:theta-hyp-2} for $\Theta_{\ell-1} = \iscH$ and $\Theta_{\ell-1} = \rec{\ell-1}{\K+1} \ulisc{\ell-1}$. \eqi{eq:theta-hyp}{1} is immediate from \eqref{eq:local-coercivity} for the former and from the nonnegativity of the stabilisation for the latter, summed over $T \in \TH$. The second follows from the next lemma applied to $\ulisc{\ell-1}\mu$, whose norm \eqref{def:hho-norm} is $\simeq \norm{\aH}{\mu}$ by \eqref{eq:local-continuity}: its first term covers $\Theta_{\ell-1} = \iscH$ and its second $\Theta_{\ell-1} = \rec{\ell-1}{\K+1} \ulisc{\ell-1}$.

\begin{lemma}[Convergence of the jumps]\label{lem:jump-convergence}
  For all $\uh = (u_{\Th}, u_{\Fh}) \in \Uh$,
  \begin{equation} \label{eq:jump-convergence}
    \Big( \sum_{F \in \Fh} \h^{-1} \norm{F}{\jump{u_{\Th}}}^2 \Big)^{1/2}
    + \Big( \sum_{F \in \Fh} \h^{-1} \norm{F}{\jump{\rec{\ell}{\k+1} \uh}}^2 \Big)^{1/2}
    \lesssim \norm{\ul{1,\ell}}{\uh} .
  \end{equation}
\end{lemma}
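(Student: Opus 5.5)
The plan is to bound the jumps face by face, inserting the single-valued face unknown. On an interior face $F = \ol{t} \cap \ol{t'}$ I write
\[
\jump{u_{\Th}}|_F = (u_t - u_F) - (u_{t'} - u_F),
\]
and on a boundary face $\jump{u_{\Th}}|_F = u_t - u_F$ with $u_F = 0$, which holds for elements of $\Uh$ since the face component lies in $\Uhb$. The same splitting applies with $u_t$ replaced by $\rec{\ell,t}{\k+1}\ul{u}_t$. Then
\[
\sum_{F} \h^{-1}\norm{F}{\jump{v}}^2 \lesssim \sum_{t\in\Th} h_t^{-1} \norm{\partial t}{v_t - u_{\partial t}}^2 ,
\]
using $h_t \simeq \h$ from \eqi{eq:geometry-b}{2}, and noting that each face is counted at most twice. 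So it suffices to bound the local quantities $h_t^{-1}\norm{\partial t}{u_t - u_{\partial t}}^2$ and $h_t^{-1}\norm{\partial t}{\rec{\ell,t}{\k+1}\ul{u}_t - u_{\partial t}}^2$ by $\norm{\ul{1,t}}{\ul{u}_t}^2$.

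The first local bound is immediate, since it is literally a term of \eqref{def:hho-norm}. For the second I insert $u_t$:
\[
\norm{\partial t}{\rec{\ell,t}{\k+1}\ul{u}_t - u_{\partial t}} \leq \norm{\partial t}{\rec{\ell,t}{\k+1}\ul{u}_t - u_t} + \norm{\partial t}{u_t - u_{\partial t}} .
\]
Only the first term remains. Set $d \doteq \rec{\ell,t}{\k+1}\ul{u}_t - u_t \in \Pk{\k+1}(t)$; by \eqref{eq:reconstruction-b} it has zero mean on $t$. I then apply \eqi{eq:polytopal-inequalities}{3} (or \eqi{eq:polytopal-inequalities}{1}) together with Poincar\'e--Wirtinger for zero-mean functions on star-shaped cells, exactly as in \eqref{eq:mean-chain}, to get
\[
h_t^{-1/2}\norm{\partial t}{d} \lesssim h_t^{-1}\norm{t}{d} \lesssim \seminorm{1,t}{d} \leq \norm{t}{\nabla \rec{\ell,t}{\k+1}\ul{u}_t} + \seminorm{1,t}{u_t}.
\]
The reconstruction gradient is bounded by $\norm{\ul{1,t}}{\ul{u}_t}$. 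This is the first half of the proof of \eqref{eq:local-continuity}: test \eqref{eq:reconstruction-a} with $w = \rec{\ell,t}{\k+1}\ul{u}_t$ and use \eqi{eq:polytopal-inequalities}{1} on $\nabla w\cdot\bm{n}$.

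Summing over $t$ then gives \eqref{eq:jump-convergence}. The only point needing care, and the one I expect to be the main obstacle, is the Poincar\'e--Wirtinger step with a constant uniform in $\ell$ on agglomerated cells. This follows from star-shapedness \eqref{eq:geometry-a}, as already invoked in \eqref{eq:mean-chain} and Lemma~\ref{lem:approximation}. The face-counting on coarse-scale geometry is harmless because the sums run over fine faces and use only $\#\Fh(t)\lesssim 1$.
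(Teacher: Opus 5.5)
Your proof is correct and follows the paper's argument. As there, you reduce the jumps to the local quantities $h_t^{-1}\norm{\partial t}{w - u_{\partial t}}^2$ for $w = u_t$ and $w = \rec{\ell,t}{\k+1}\ul{u}_t$, using that $u_{\Fh}$ is single-valued and vanishes on $\Fhb$, and you control the reconstruction case through the zero mean of $\rec{\ell,t}{\k+1}\ul{u}_t - u_t$, \eqi{eq:polytopal-inequalities}{3} and Poincar\'e--Wirtinger. The only difference is that you insert $u_t$ rather than $\lproj{\partial t}{\k} w$, which makes the first case immediate from \eqref{def:hho-norm} and avoids the paper's detour through $r_t$ and the inclusion $\Pk{0}(t)|_F \subset \IPk{\k}(F)$.
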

\begin{proof}
  For $w \in H^1(t)$, inserting $\lproj{\partial t}{\k} w$ and using that constants belong to $\IPk{\k}(F)$ by \eqi{eq:interface-space-design}{1}, the contractivity of $\lproj{F}{\k}$, \eqi{eq:polytopal-inequalities}{3} and Poincar\'e--Wirtinger,
  \begin{equation} \label{eq:jump-local}
    h_t^{-1} \norm{\partial t}{w - u_{\partial t}}^2 \lesssim \seminorm{1,t}{w}^2 + h_t^{-1} \norm{\partial t}{\lproj{\partial t}{\k} w - u_{\partial t}}^2 .
  \end{equation}
  Since $u_{\Fh}$ is single-valued and vanishes on $\Fhb$, \eqref{eq:jump} gives $\jump{w} = \jump{w - u_{\partial t}}$ on every face, and each face is met by at most two cells, so it suffices to bound \eqref{eq:jump-local} for $w = u_t$ and for $w = \rec{\ell,t}{\k+1} \ul{u}_t$. In the first case $\lproj{\partial t}{\k} u_t - u_{\partial t} = -r_t(\ul{u}_t)$ and $\norm{\partial t}{r_t(\ul{u}_t)} \leq \norm{\partial t}{u_{\partial t} - u_t}$, so \eqref{eq:jump-local} is bounded by $\norm{\ul{1,t}}{\ul{u}_t}^2$. In the second, $\lproj{\partial t}{\k} \rec{\ell,t}{\k+1} \ul{u}_t - u_{\partial t} = \lproj{\partial t}{\k}(\rec{\ell,t}{\k+1} \ul{u}_t - u_t) - r_t(\ul{u}_t)$, and $\rec{\ell,t}{\k+1} \ul{u}_t - u_t$ has zero mean by \eqref{eq:reconstruction-b}, so \eqi{eq:polytopal-inequalities}{3}, Poincar\'e--Wirtinger and \eqref{eq:local-continuity} again bound \eqref{eq:jump-local} by $\norm{\ul{1,t}}{\ul{u}_t}^2$.
\end{proof}

\begin{lemma}[Bulk approximation]\label{lem:bulk-approximation}
  Both choices of $\Theta_{\ell-1}$ satisfy \eqref{eq:theta-hyp-2}.
\end{lemma}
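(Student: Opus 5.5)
The bound \eqref{eq:theta-hyp-2} has to be checked for $\Theta_{\ell-1} = \iscH$ and for $\Theta_{\ell-1} = \rec{\ell-1}{\K+1}\ulisc{\ell-1}$. The natural comparison object on level $\ell-1$ is the interpolant $\ul{\mathcal{J}}_{\ell-1} z$. Set $z_{\ell-1} \doteq \lproj{\TH}{1,\K+1} z$ and $\mu \doteq \lproj{\FH}{\K} z \in \UHb$. We compare $\ulisc{\ell-1}\mu$ with $\ul{\mathcal{J}}_{\ell-1} z = (\lproj{\TH}{\K+1} z, \mu)$. The two hybrid elements share the face component, so their difference is a bubble. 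Lemma~\ref{lem:bubbles}, via \eqref{eq:bubble-interp} on level $\ell-1$, gives
\[
  \norm{\ual{\ell-1}}{\ul{\mathcal{J}}_{\ell-1} z - \ulisc{\ell-1}\mu} \lesssim \H \seminorm{H^2(\dom)}{z} .
\]
By \eqref{eq:local-continuity} this controls the $\norm{\ul{1,\ell-1}}{\cdot}$ norm of the difference.

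\emph{Reconstruction choice.} We write $z_{\ell-1} - \rec{\ell-1}{\K+1}\ulisc{\ell-1}\mu = \rec{\ell-1}{\K+1}(\ul{\mathcal{J}}_{\ell-1} z - \ulisc{\ell-1}\mu)$, using the exactness \eqref{eq:elliptic-identity} on each coarse cell. The difference $\ul{w} \doteq \ul{\mathcal{J}}_{\ell-1} z - \ulisc{\ell-1}\mu$ lies in $\UH$ and has vanishing boundary face component. The broken $H^1$ seminorm of $\rec{\ell-1}{\K+1}\ul{w}$ is bounded by $\norm{\ual{\ell-1}}{\ul{w}}$, since the stabilisation is nonnegative. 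For the jump term we apply Lemma~\ref{lem:jump-convergence} on level $\ell-1$, whose second summand gives $\H^{-1/2}\norm{\FH}{\jump{\rec{\ell-1}{\K+1}\ul{w}}} \lesssim \norm{\ul{1,\ell-1}}{\ul{w}}$. Combined with the bubble bound above, this yields \eqref{eq:theta-hyp-2}.

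\emph{Condensation choice.} We split
\[
  z_{\ell-1} - \iscH\mu = (z_{\ell-1} - \lproj{\TH}{\K+1} z) + (\lproj{\TH}{\K+1} z - \iscH\mu).
\]
The second piece is exactly the cell component of $\ul{w}$. Its broken $H^1$ seminorm is $\lesssim \norm{\ual{\ell-1}}{\ul{w}}$ by \eqref{eq:local-coercivity}, and its jumps are controlled by the first summand of Lemma~\ref{lem:jump-convergence}; both are $\lesssim \H\seminorm{H^2(\dom)}{z}$.

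For the first piece, both projections approximate $z$ to order $\H$ in $H^1$ by \eqi{eq:approximation-bulk}{1} with $s=2$, so the $H^1(\TH)$ seminorm is bounded as required. For the jumps we use $\jump{z}=0$ on every face, so
\[
  \jump{z_{\ell-1} - \lproj{\TH}{\K+1} z} = \jump{z_{\ell-1} - z} - \jump{\lproj{\TH}{\K+1}z - z},
\]
and each cell-boundary trace is bounded by \eqi{eq:approximation-bulk}{2}, giving $\H^{3/2}\seminorm{H^2}{z}$. This is the same computation as the proof of \eqref{eq:interpolation-jumps}.

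\emph{Main obstacle.} The delicate point is the jump control. Lemma~\ref{lem:jump-convergence} must be applied to the difference $\ul{w}$, which does vanish on $\Fl{\ell-1}^{\boundary}$ because $\lproj{F}{\K} z = 0$ there, rather than to the separate terms. The scaling $\h^{-1}$ versus $\H^{-1}$ is reconciled by \eqi{eq:geometry-b}{3}. One must also remember that \eqref{eq:bubble-interp} is stated on level $\ell$ but holds verbatim on level $\ell-1$.
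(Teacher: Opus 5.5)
Your proposal is correct and follows the paper's proof essentially step for step. You bound the bubble $\ul{\mathcal{J}}_{\ell-1} z - \ulisc{\ell-1}\mu$ by \eqref{eq:bubble-interp} at level $\ell-1$, use \eqref{eq:elliptic-identity} for the reconstruction choice, split through $\lproj{\TH}{\K+1} z$ with \eqref{eq:local-coercivity} and \eqref{eq:approximation-bulk} for the condensation choice, and apply Lemma~\ref{lem:jump-convergence} at level $\ell-1$ for the jumps, exactly as the paper does. Two small remarks: that bubble has zero face component on every face, not only on boundary faces; and since the jump lemma applied at level $\ell-1$ already carries $\H^{-1}$, there is no $\h$ versus $\H$ mismatch for \eqi{eq:geometry-b}{3} to reconcile.
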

\begin{proof}
  Write $z_{\ell-1} \doteq \lproj{\TH}{1,\K+1} z$ and $\mu \doteq \lproj{\FH}{\K} z$. Set $\ul{\delta} \doteq \ul{\mathcal{J}}_{\ell-1} z - \ulisc{\ell-1} \mu$, a bubble with $\norm{\uaH}{\ul{\delta}} \lesssim \H \seminorm{H^2(\dom)}{z}$ by \eqref{eq:bubble-interp} at level $\ell-1$. For the two choices of $\Theta_{\ell-1}$, inserting $\lproj{\TH}{\K+1} z$ in the first and using \eqref{eq:elliptic-identity} in the second,
  \begin{align*}
    & \seminorm{H^1(\TH)}{z_{\ell-1} - \iscH \mu}
    \leq \seminorm{H^1(\TH)}{\delta_{\TH}} + \seminorm{H^1(\TH)}{z_{\ell-1} - \lproj{\TH}{\K+1} z}
    \just{\lesssim}{\eqref{eq:local-coercivity},\,\eqi{eq:approximation-bulk}{1}} \norm{\uaH}{\ul{\delta}} + \H \seminorm{H^2(\dom)}{z}, \\
    & \seminorm{H^1(\TH)}{z_{\ell-1} - \rec{\ell-1}{\K+1} \ulisc{\ell-1} \mu}
    = \seminorm{H^1(\TH)}{\rec{\ell-1}{\K+1} \ul{\delta}}
    \just{\leq}{\eqref{eq:local-bilinear-form}} \norm{\uaH}{\ul{\delta}},
  \end{align*}
  both $\lesssim \H \seminorm{H^2(\dom)}{z}$. For the jump term, the same two splittings and \eqref{eq:jump-convergence} at level $\ell-1$ give
  \[
    \H^{-1/2} \norm{\FH}{\jump{\delta_{\TH}}} + \H^{-1/2} \norm{\FH}{\jump{\rec{\ell-1}{\K+1} \ul{\delta}}}
    \just{\lesssim}{\eqref{eq:local-continuity}} \norm{\uaH}{\ul{\delta}},
    \qquad
    \H^{-1/2} \norm{\FH}{\jump{z_{\ell-1} - \lproj{\TH}{\K+1} z}}
    \just{\lesssim}{\eqref{eq:interpolation-jumps},\,\eqi{eq:approximation-bulk}{2}} \H \seminorm{H^2(\dom)}{z},
  \]
\end{proof}

\begin{theorem}[The two prolongations]\label{th:prolongation-verification}
  $\IUh$ and $\IRh$ satisfy \eqi{eq:ass-prolongation-stab}{1} and \eqref{eq:ass-prolongation-first}.
\end{theorem}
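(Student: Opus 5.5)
The plan is to obtain the theorem directly from Lemma~\ref{lem:abstract}: both prolongations have the form $\IHh = \Pavh \Theta_{\ell-1}$, with $\Theta_{\ell-1} = \iscH$ for $\IUh$ and $\Theta_{\ell-1} = \rec{\ell-1}{\K+1}\ulisc{\ell-1}$ for $\IRh$. Both maps take values in $\Pk{\K+1}(\TH)$. So it suffices to verify \eqref{eq:theta-hyp} and \eqref{eq:theta-hyp-2} for each choice, and then to convert \eqref{eq:abstract-l2} into \eqi{eq:ass-prolongation-first}{2}.

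For \eqi{eq:theta-hyp}{1}, with $\Theta_{\ell-1} = \iscH$ I would apply \eqref{eq:local-coercivity} at level $\ell-1$ to $(\isc{\ell-1,T}\mu, \mu)$ and sum over $T \in \TH$. With the reconstruction, the bound $\seminorm{1,T}{\rec{\ell-1,T}{\K+1}\ulisc{\ell-1,T}\mu}^2 \le \ual{\ell-1,T}(\ulisc{\ell-1,T}\mu,\ulisc{\ell-1,T}\mu)$ follows from \eqref{eq:local-bilinear-form}, since the stabilisation term is nonnegative. For \eqi{eq:theta-hyp}{2}, Lemma~\ref{lem:jump-convergence} applied at level $\ell-1$ to $\ulisc{\ell-1}\mu \in \UH$ bounds $\H^{-1}\norm{\FH}{\jump{\cdot}}^2$ by $\norm{\ul{1,\ell-1}}{\ulisc{\ell-1}\mu}^2$. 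Its first term covers $\iscH\mu$ and its second covers the reconstruction, and \eqref{eq:local-continuity} makes the right-hand side $\simeq \norm{\aH}{\mu}^2$. Hypothesis \eqref{eq:theta-hyp-2} is exactly Lemma~\ref{lem:bulk-approximation}. Lemma~\ref{lem:abstract} then yields \eqi{eq:ass-prolongation-stab}{1} and \eqi{eq:ass-prolongation-first}{1} for both operators.

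The remaining bound is \eqi{eq:ass-prolongation-first}{2}, and this is the step I expect to need care. For $\IUh$, \eqref{eq:abstract-l2} with $\Theta_{\ell-1}\mu = \iscH\mu$ is already the claim. For $\IRh$, \eqref{eq:abstract-l2} only controls $\rec{\ell-1}{\K+1}\ulisc{\ell-1}\mu - \isch\IRh\mu$, so I would insert $\pm\rec{\ell-1}{\K+1}\ulisc{\ell-1}\mu$ and bound the extra term $\norm{T}{\rec{\ell-1,T}{\K+1}\ul{v}_T - v_T}$, where $\ul{v}_T = \ulisc{\ell-1,T}\mu$. By \eqref{eq:reconstruction-b} this difference has zero mean on $T$. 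Poincar\'e--Wirtinger (valid on star-shaped $T$) then gives $\norm{T}{\rec{\ell-1,T}{\K+1}\ul{v}_T - v_T} \lesssim \H \seminorm{1,T}{\rec{\ell-1,T}{\K+1}\ul{v}_T - v_T}$. The right-hand side is bounded by $\H\norm{\ul{1,\ell-1},T}{\ul{v}_T}$ using the reconstruction bound in the proof of \eqref{eq:local-continuity} together with \eqref{eq:local-coercivity}. Summing over $T$ and applying \eqref{eq:local-continuity} gives $\lesssim \H\norm{\aH}{\mu}$.

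Finally, $\H \simeq \h$ from \eqi{eq:geometry-b}{3} reconciles the factors $\h$ and $\H$ appearing in Lemma~\ref{lem:abstract}, which completes the proof for both prolongations.
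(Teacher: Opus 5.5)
Your proposal is correct and follows the paper's proof essentially step for step. It applies Lemma~\ref{lem:abstract} with the two choices of $\Theta_{\ell-1}$, verifies its hypotheses through \eqref{eq:local-coercivity} and the nonnegative stabilisation, Lemma~\ref{lem:jump-convergence} with \eqref{eq:local-continuity}, and Lemma~\ref{lem:bulk-approximation}, and finishes $\IRh$ with a triangle inequality that exploits the common cell means given by \eqref{eq:reconstruction-b}. The only cosmetic difference is in that last step: you apply Poincar\'e--Wirtinger directly to the zero-mean difference $\rec{\ell-1}{\K+1}\ulisc{\ell-1}\mu - \iscH\mu$, whereas the paper removes $\lproj{\TH}{0}$ from each map separately and uses \eqi{eq:approximation-bulk}{1} with $l=0$, $s=1$ together with \eqi{eq:theta-hyp}{1}.
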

\begin{proof}
  Apply Lemma~\ref{lem:abstract} with $\Theta_{\ell-1} = \iscH$ and $\Theta_{\ell-1} = \rec{\ell-1}{\K+1}\ulisc{\ell-1}$, whose hypotheses hold by the paragraph above, Lemma~\ref{lem:jump-convergence} and Lemma~\ref{lem:bulk-approximation}. This gives \eqi{eq:ass-prolongation-stab}{1}, \eqi{eq:ass-prolongation-first}{1} and \eqref{eq:abstract-l2}.

  For $\Theta_{\ell-1} = \iscH$, \eqref{eq:abstract-l2} is \eqi{eq:ass-prolongation-first}{2}. For $\Theta_{\ell-1} = \rec{\ell-1}{\K+1}\ulisc{\ell-1}$ the two differ, and \eqref{eq:reconstruction-b} gives them the same mean on every coarse cell, so
  \[
    \norm{L^2(\dom)}{\Theta_{\ell-1}\mu - \iscH\mu}
    \leq \norm{L^2(\dom)}{(\identity - \lproj{\TH}{0})\Theta_{\ell-1}\mu} + \norm{L^2(\dom)}{(\identity - \lproj{\TH}{0})\iscH\mu}
    \just{\lesssim}{\eqi{eq:approximation-bulk}{1}} \H \norm{\aH}{\mu},
  \]
  with $l = 0$, $s = 1$, and then \eqi{eq:theta-hyp}{1} for both maps. The triangle inequality then turns \eqref{eq:abstract-l2} into \eqi{eq:ass-prolongation-first}{2}.
\end{proof}

\subsection{Stability in the skeletal norms}\label{sec:levelnorm}

\begin{lemma}[Level-norm stability]\label{lem:levelnorm-stability}
  $\IUh$ and $\IRh$ satisfy \eqi{eq:ass-prolongation-stab}{2}.
\end{lemma}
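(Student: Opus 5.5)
We aim to prove $\norm{\ell}{\IHh \mu} \lesssim \norm{\ell-1}{\mu}$ for both $\IHh = \Pavh \Theta_{\ell-1}$ by passing through the face-based equivalent norms of \eqi{eq:face-bounds}{2} on both levels. On the fine level,
\[
  \norm{\ell}{\Pavh w}^2 \simeq \sum_{t \in \Th} h_t \norm{\partial t}{\Pavh w}^2 ,
\]
and on the coarse level $\norm{\ell-1}{\mu}^2 \simeq \sum_{T \in \TH} h_T \norm{\partial T}{\mu}^2$. The proof thus reduces to two steps. First, an $L^2$-type stability of $\Pavh$ on broken coarse polynomials: $\sum_t h_t \norm{\partial t}{\Pavh w}^2 \lesssim \norm{L^2(\dom)}{w}^2$ for $w \in \Pk{\K+1}(\TH)$. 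Second, an $L^2$ bound for the reconstruction, $\norm{L^2(\dom)}{\Theta_{\ell-1}\mu}^2 \lesssim \sum_T h_T \norm{\partial T}{\mu}^2$.

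\textbf{Step 1.} On each fine face the definition \eqref{def:Pi-av} is a convex combination of $L^2(F)$-projections of the traces $w|_T$ of at most two coarse polynomials. Using $L^2(F)$-contractivity of $\lproj{F}{\k}$ and $\alpha \le 1$,
\[
  h_t \norm{\partial t}{\Pavh w}^2 \lesssim \sum_{T} h_t \norm{\partial t}{w_T}^2 ,
\]
summed over the (at most two per face) coarse cells $T$ adjacent to the faces of $t$. Each $w_T$ restricted to $t \subset T$ is a polynomial of degree $\K+1$ on $t$, so the discrete trace inequality \eqi{eq:polytopal-inequalities}{1} gives $h_t \norm{\partial t}{w_T}^2 \lesssim \norm{t}{w_T}^2$. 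Here we use $t \in \Th(T)$ when the face lies on $\partial t$ with $t \subset T$; a face on $\partial t \cap \partial T$ is a face of a fine cell $t' \subset T'$ on the other side as well. Grouping each contribution with the fine cell inside the coarse cell that owns the polynomial, and using the bounded face count from \eqi{eq:geometry-b}{1}, yields $\norm{\ell}{\Pavh w}^2 \lesssim \norm{L^2(\dom)}{w}^2$. This step is routine once the bookkeeping of which polynomial is traced on which side is done.

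\textbf{Step 2.} For $\Theta_{\ell-1} = \iscH$, \eqref{eq:local-energy-bound} at level $\ell-1$ gives directly $\norm{T}{\isc{\ell-1,T}\mu}^2 \lesssim h_T \norm{\partial T}{\mu}^2$. This step relies on Lemma~\ref{lem:polytopal} applied to coarse cells, which is legitimate since Assumption~\ref{ass:geometry} holds on every level. For $\Theta_{\ell-1} = \rec{\ell-1}{\K+1}\ulisc{\ell-1}$ we split the reconstruction into its mean and its fluctuation, using that by \eqref{eq:reconstruction-b} its mean on $T$ equals that of $v \doteq \isc{\ell-1,T}\mu$. Poincar\'e--Wirtinger then gives
\[
  \norm{T}{\rec{\ell-1,T}{\K+1}\ulisc{\ell-1,T}\mu} \le \norm{T}{v} + C h_T \norm{T}{\nabla \rec{\ell-1,T}{\K+1}\ulisc{\ell-1,T}\mu} .
\]
The gradient term is bounded by $h_T \norm{\uaH,T}{\ulisc{\ell-1,T}\mu} \lesssim h_T^{1/2}\norm{\partial T}{\mu}$, again by \eqref{eq:local-energy-bound}. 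Summing over $T \in \TH$ and combining with Step 1 and the two norm equivalences of \eqi{eq:face-bounds}{2} concludes.

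\textbf{Main obstacle.} The delicate point is Step 1 on fine faces lying on coarse faces. There the trace of $w_{T'}$ from the neighbouring coarse cell must be charged to a fine cell $t' \subset T'$ adjacent to $F$, not to $t$ itself. We need that every such face is shared by exactly one fine cell of each side, which holds since $F = \ol{t} \cap \ol{t'}$ by \eqref{eq:faces}, and that $h_{t'} \simeq h_t$, which is quasi-uniformity \eqi{eq:geometry-b}{2}. A cruder alternative, if that bookkeeping proves awkward, is to bound the trace of $w_{T'}$ on $\partial T'$ through \eqi{eq:polytopal-inequalities}{1} on the coarse cell $T'$. This gives $\sum_{F \subset \partial T'} \h\norm{F}{w_{T'}}^2 \lesssim \norm{T'}{w_{T'}}^2$ directly, using $\H \simeq \h$.
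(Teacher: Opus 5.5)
Your proof is correct and follows the paper's argument essentially step by step. You reduce via \eqi{eq:face-bounds}{2} on both levels, bound $\Pavh$ face by face with the contractivity of $\lproj{F}{\k}$ and the partition-of-unity weights, charge the neighbour trace on a coarse face to the fine cell $t' \subset T'$ on the other side before applying \eqi{eq:polytopal-inequalities}{1}, and finally bound $\norm{T}{\Theta_{\ell-1}\mu} \lesssim h_T^{1/2}\norm{\partial T}{\mu}$ via \eqref{eq:local-energy-bound}, \eqref{eq:reconstruction-b} and Poincar\'e--Wirtinger. The only difference is cosmetic: the paper points out that the face-count bound \eqi{eq:geometry-b}{1} you invoke is not needed, since each fine face has at most one fine cell on each side and the faces of $t'$ partition $\partial t'$.
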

\begin{proof}
  By \eqi{eq:face-bounds}{2} at both levels it suffices to prove $\sum_{t \in \Th} h_t \norm{\partial t}{\IHh \mu}^2 \lesssim \sum_{T \in \TH} h_T \norm{\partial T}{\mu}^2$. Write $v_T \doteq \Theta_{\ell-1}\mu|_T$, let $t \in \Th(T)$ and let $F \in \Fh(t)$. If $F \in \Fh \setminus \FH$, then \eqref{def:Pi-av} and the $L^2(F)$-contractivity of $\lproj{F}{\k}$ give $\norm{F}{\IHh \mu} \leq \norm{F}{v_T}$; if $F \subset \partial T \cap \partial T'$, the convexity of $x \mapsto x^2$ and $\alpha_{FT} + \alpha_{FT'} = 1$ give $\norm{F}{\IHh \mu}^2 \leq \norm{F}{v_T}^2 + \norm{F}{v_{T'}}^2$. Summing over $F \in \Fh(t)$,
  \[
    h_t \norm{\partial t}{\IHh \mu}^2 \leq h_t \norm{\partial t}{v_T}^2 + \sum_{F \subset \partial t \cap \partial T'} h_t \norm{F}{v_{T'}}^2 .
  \]
  Each face of the second sum is also a face of the fine cell $t' \subset T'$ on its other side, and $h_t \simeq h_{t'}$, so on summing over $t \in \Th$ every neighbour term is absorbed into the first term of the cell carrying it. No bound on the number of fine faces per coarse cell is used: a fine face has at most one cell on each side, and the faces of $t'$ partition $\partial t'$, so the collected terms reassemble into $\norm{\partial t'}{v_{T'}}^2$. The fine-cell trace inequality \eqi{eq:polytopal-inequalities}{1} turns $h_t \norm{\partial t}{v_T}^2$ into $\norm{t}{v_T}^2$, and the fine cells of $T$ are disjoint, so summing gives $\norm{T}{v_T}^2$. Finally $\norm{T}{v_T} \lesssim h_T^{1/2} \norm{\partial T}{\mu}$: for $\Theta_{\ell-1} = \iscH$ this is \eqref{eq:local-energy-bound}, and for $\Theta_{\ell-1} = \rec{\ell-1}{\K+1}\ulisc{\ell-1}$ it follows from \eqref{eq:local-energy-bound}, \eqref{eq:reconstruction-b} and Poincar\'e--Wirtinger.
\end{proof}

\subsection{Second-order approximation of smooth functions}\label{sec:second-order}

The bounds of \eqref{eq:ass-prolongation-first} are first order. For the skeletal projection of a smooth function they improve by one order, which is what \eqref{eq:ass-prolongation-approx} requires. Throughout, $z \in H^1_0(\dom) \cap H^2(\dom)$ and $\theta \doteq \Theta_{\ell-1} \lproj{\FH}{\K} z \in \Pk{\K+1}(\TH)$.

\begin{lemma}[Coarse reconstruction of smooth functions]\label{lem:smooth-reconstruction}
  For both choices of $\Theta_{\ell-1}$,
  \begin{subequations} \label{eq:smooth-proximity}
  \begin{align}
    \norm{L^2(\dom)}{\theta - z} &\lesssim \H^2 \seminorm{H^2(\dom)}{z}, \qquad
    \seminorm{H^1(\TH)}{\theta - z} \lesssim \H \seminorm{H^2(\dom)}{z}, \label{eq:smooth-proximity-l2} \\
    \seminorm{H^2(\TH)}{\theta} &\lesssim \seminorm{H^2(\dom)}{z}, \qquad
    \norm{\FH}{\jump{\theta}}^2 \lesssim \H^3 \seminorm{H^2(\dom)}{z}^2 . \label{eq:smooth-proximity-jumps}
  \end{align}
  \end{subequations}
\end{lemma}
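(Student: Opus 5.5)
The plan is to compare $\theta$ with the coarse elliptic projection $z_{\ell-1} \doteq \lproj{\TH}{1,\K+1} z$ and then with $z$ itself. Everything except the $L^2$ bound is already first order and follows from results proved above. The $L^2$ bound needs an extra factor $\H$, so it requires a separate argument.

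\emph{First-order bounds.} The $H^1$ bound in \eqref{eq:smooth-proximity-l2} comes from splitting $\theta - z = (\theta - z_{\ell-1}) + (z_{\ell-1} - z)$. The first piece is controlled by \eqref{eq:theta-hyp-2}, established in Lemma~\ref{lem:bulk-approximation}. The second is controlled by \eqi{eq:approximation-bulk}{1} with $s=2$, $m=1$.

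For the jumps in \eqref{eq:smooth-proximity-jumps} we use the same split. The jump of $\theta - z_{\ell-1}$ is $\lesssim \H^{3/2}\seminorm{H^2(\dom)}{z}$, again by \eqref{eq:theta-hyp-2}. The jump of $z_{\ell-1}$ is bounded by \eqref{eq:interpolation-jumps}.

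For the $H^2$ seminorm, write $\seminorm{H^2(T)}{\theta} \leq \seminorm{H^2(T)}{\theta - z_{\ell-1}} + \seminorm{H^2(T)}{z_{\ell-1}}$. For the first term we apply the inverse inequality \eqi{eq:polytopal-inequalities}{4} componentwise to $\nabla(\theta - z_{\ell-1})$ on the coarse cell $T$, which is star-shaped by \eqref{eq:geometry-a} at level $\ell-1$. This gives $\lesssim \H^{-1}\seminorm{1,T}{\theta - z_{\ell-1}}$, and we conclude with \eqref{eq:theta-hyp-2} after summing. For the second term, $\seminorm{H^2(T)}{z_{\ell-1}} \leq \seminorm{H^2(T)}{z} + \seminorm{H^2(T)}{z - z_{\ell-1}}$, and \eqi{eq:approximation-bulk}{1} with $m=s=2$ bounds the last piece.

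\emph{$L^2$ bound, the main obstacle.} Splitting as above, $\norm{L^2(\dom)}{z - z_{\ell-1}} \lesssim \H^2 \seminorm{H^2(\dom)}{z}$ by \eqi{eq:approximation-bulk}{1} with $m=0$. It therefore remains to show $\norm{L^2(\dom)}{\theta - z_{\ell-1}} \lesssim \H^2\seminorm{H^2(\dom)}{z}$. Using the notation of Lemma~\ref{lem:bulk-approximation}, let $\ul\delta \doteq \ul{\mathcal{J}}_{\ell-1} z - \ulisc{\ell-1}\mu$ with $\mu = \lproj{\FH}{\K} z$.

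For $\Theta_{\ell-1} = \iscH$, write $\theta - z_{\ell-1} = (\lproj{\TH}{\K+1} z - z_{\ell-1}) - \delta_{\TH}$. The first term is $O(\H^2)$ by \eqi{eq:approximation-bulk}{1}. For the bubble we cannot stop at \eqref{eq:bubble-l2}: combined with \eqref{eq:bubble-interp} it yields only $\H\cdot\H$, which is in fact exactly the required order. Hence $\norm{\TH}{\delta_{\TH}} \lesssim \H \norm{\uaH}{\ul\delta} \lesssim \H^2\seminorm{H^2(\dom)}{z}$, and this case closes.

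For $\Theta_{\ell-1} = \rec{\ell-1}{\K+1}\ulisc{\ell-1}$, \eqref{eq:elliptic-identity} gives $z_{\ell-1} - \theta = \rec{\ell-1}{\K+1}\ul\delta$. By \eqref{eq:reconstruction-b} this has the same mean as $\delta_T$ on each coarse cell $T$. Poincar\'e--Wirtinger then gives
\[
\norm{T}{\rec{\ell-1}{\K+1}\ul\delta} \lesssim \H\seminorm{1,T}{\rec{\ell-1}{\K+1}\ul\delta} + \norm{T}{\delta_T},
\]
and both terms are $\lesssim \H\norm{\uaH}{\ul\delta}$ by \eqref{eq:local-bilinear-form} and \eqref{eq:bubble-l2}. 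Combined with \eqref{eq:bubble-interp} at level $\ell-1$, this gives the $\H^2$ bound.

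The step needing the most care is confirming that no duality argument is required here. The bubble estimate \eqref{eq:bubble-interp} already carries one power of $\H$ and \eqref{eq:bubble-l2} supplies the second, so the argument uses only first-order energy bounds on the coarse level.
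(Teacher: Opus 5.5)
Your proposal is correct and follows the paper's proof essentially step by step. Both compare $\theta$ with $z_{\ell-1}$. Both use Lemma~\ref{lem:bulk-approximation} (that is, \eqref{eq:theta-hyp-2}) together with \eqref{eq:interpolation-jumps} and the coarse inverse inequality for the $H^1$, jump and $H^2$ bounds. For the $L^2$ bound, both use the bubble $\ul{\delta}$ via \eqref{eq:bubble-l2}--\eqref{eq:bubble-interp}, plus the mean matching from \eqref{eq:reconstruction-b} for the reconstruction choice. One wording slip: your sentence saying you ``cannot stop at'' \eqref{eq:bubble-l2} contradicts what follows, since stopping there is exactly what you (and the paper) do, and it already gives $\H^2$.
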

\begin{proof}
  Write $z_{\ell-1} \doteq \lproj{\TH}{1,\K+1} z$ and $\vartheta \doteq z_{\ell-1} - \theta$, so that Lemma~\ref{lem:bulk-approximation} gives
  \vspace{-0.5em}
  \begin{equation} \label{eq:vartheta-h1}
    \seminorm{H^1(\TH)}{\vartheta} + \H^{-1/2} \norm{\FH}{\jump{\vartheta}} \lesssim \H \seminorm{H^2(\dom)}{z} .
  \end{equation}
  Set $\ul{\delta} = (\delta_{\TH}, 0) \doteq \ul{\mathcal{J}}_{\ell-1} z - \ulisc{\ell-1} \lproj{\FH}{\K} z$, a bubble with bulk component $\delta_{\TH} = \lproj{\TH}{\K+1} z - \iscH \lproj{\FH}{\K} z$, so that $\norm{\TH}{\delta_{\TH}} \just{\lesssim}{\eqref{eq:bubble-l2}} \H \norm{\uaH}{\ul{\delta}} \just{\lesssim}{\eqref{eq:bubble-interp}} \H^2 \seminorm{H^2(\dom)}{z}$. For $\Theta_{\ell-1} = \iscH$ one has $\vartheta = (z_{\ell-1} - \lproj{\TH}{\K+1} z) + \delta_{\TH}$, and inserting $\pm z$ and using \eqi{eq:approximation-bulk}{1} twice gives $\norm{L^2(\dom)}{\vartheta} \lesssim \H^2 \seminorm{H^2(\dom)}{z}$. For $\Theta_{\ell-1} = \rec{\ell-1}{\K+1}\ulisc{\ell-1}$, \eqref{eq:elliptic-identity} gives $\vartheta = \rec{\ell-1}{\K+1} \ul{\delta}$, so $\lproj{\TH}{0} \vartheta = \lproj{\TH}{0} \delta_{\TH}$ by \eqref{eq:reconstruction-b} and
  \vspace{-0.5em}
  \[
    \norm{L^2(\dom)}{\vartheta} \leq \norm{L^2(\dom)}{(\identity - \lproj{\TH}{0})\vartheta} + \norm{\TH}{\lproj{\TH}{0}\delta_{\TH}}
    \lesssim \H \seminorm{H^1(\TH)}{\vartheta} + \norm{\TH}{\delta_{\TH}}
    \just{\lesssim}{\eqref{eq:vartheta-h1}} \H^2 \seminorm{H^2(\dom)}{z} .
  \]
  Then \eqi{eq:smooth-proximity-l2}{1} follows from $\theta - z = (z_{\ell-1} - z) - \vartheta$ with \eqi{eq:approximation-bulk}{1}, and \eqi{eq:smooth-proximity-l2}{2} from the same splitting with \eqref{eq:vartheta-h1}.
  For \eqref{eq:smooth-proximity-jumps}, $\seminorm{H^2(\TH)}{z_{\ell-1}} \lesssim \seminorm{H^2(\dom)}{z}$ on inserting $\pm z$ and using \eqi{eq:approximation-bulk}{1} with $m = s = 2$, and $\seminorm{H^2(\TH)}{\vartheta} \lesssim \H^{-1} \seminorm{H^1(\TH)}{\vartheta} \lesssim \seminorm{H^2(\dom)}{z}$ by \eqi{eq:polytopal-inequalities}{4}, $\vartheta$ being a polynomial. For the jumps,
  \[
    \norm{\FH}{\jump{\theta}}
    \leq \norm{\FH}{\jump{z_{\ell-1}}} + \norm{\FH}{\jump{\vartheta}}
    \just{\lesssim}{\eqref{eq:interpolation-jumps},\,\eqref{eq:vartheta-h1}} \H^{3/2} \seminorm{H^2(\dom)}{z} .
  \]
\end{proof}

\begin{lemma}[Skeletal approximation]\label{lem:approximation-verification}
  $\IUh$ and $\IRh$ satisfy \eqref{eq:ass-prolongation-approx}.
\end{lemma}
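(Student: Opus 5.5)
By \eqi{eq:face-bounds}{2} it suffices to bound the face sum $\sum_{t \in \Th} h_t \norm{\partial t}{\lproj{\Fh}{\k} z - \Pavh \theta}^2$ by $\H^4 \seminorm{H^2(\dom)}{z}^2$. Here $\theta = \Theta_{\ell-1}\lproj{\FH}{\K} z$, so that $\IHh \lproj{\FH}{\K} z = \Pavh \theta$. The plan is to compare both terms face by face with the smooth function $z$ and to use Lemma~\ref{lem:smooth-reconstruction} for everything involving $\theta$.

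Fix $t \in \Th(T)$ and $F \in \Fh(t)$. Boundary faces contribute nothing, since $\lproj{F}{\k} z = 0$ there and $\Pavh$ vanishes on $\Fhb$. On an interior fine face $F \in \Fhi \setminus \FH$ we have $\lproj{F}{\k} z - \Pavh\theta = \lproj{F}{\k}(z - \theta_T)$. On a fine face lying on a coarse face $F \subset \partial T \cap \partial T'$, the fact that $\alpha_{FT} + \alpha_{FT'} = 1$ gives
\[
  \lproj{F}{\k} z - \Pavh\theta = \alpha_{FT}\lproj{F}{\k}(z - \theta_T) + \alpha_{FT'}\lproj{F}{\k}(z - \theta_{T'}) .
\]
By contractivity of $\lproj{F}{\k}$, and since $\alpha \le 1$, in both cases $\norm{F}{\lproj{F}{\k} z - \Pavh\theta}^2 \lesssim \norm{F}{z - \theta_T}^2 + \norm{F}{z - \theta_{T'}}^2$. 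The neighbour term is reassigned to the fine cell $t' \subset T'$ across $F$, exactly as in the proof of Lemma~\ref{lem:levelnorm-stability}; this step uses $h_t \simeq h_{t'}$ and the fact that each fine face has at most one cell on each side. This leaves
\[
  \sum_{T \in \TH} \sum_{t \in \Th(T)} h_t \norm{\partial t}{z - \theta_T}^2 .
\]

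Each term is controlled by the fine-cell trace inequality \eqi{eq:polytopal-inequalities}{3} applied to $w = z - \theta_T \in H^1(t)$:
\[
  h_t \norm{\partial t}{z - \theta_T}^2 \lesssim \norm{t}{z - \theta_T}^2 + h_t^2 \seminorm{1,t}{z - \theta_T}^2 .
\]
Summing over the disjoint fine cells of $T$ and then over $T$ gives $\norm{L^2(\dom)}{z - \theta}^2 + \h^2 \seminorm{H^1(\TH)}{z - \theta}^2$. By \eqref{eq:smooth-proximity-l2} and $\h \simeq \H$, this is $\lesssim \H^4 \seminorm{H^2(\dom)}{z}^2$, as required.

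The main risk is that the trace inequality must be applied on the fine cells rather than on the coarse ones. A coarse-cell trace inequality on $\partial T$ would only see the coarse faces, and would miss the many fine faces interior to $T$. The fine-cell inequality handles both kinds of face at once, and it needs only star-shapedness of the fine cells, which \eqref{eq:geometry-a} guarantees. Because only $L^2$ and broken $H^1$ proximity are used, the jump and $H^2$ bounds \eqref{eq:smooth-proximity-jumps} appear not to be needed for this lemma. That is a warning sign worth double-checking, but the argument above does not seem to require them.
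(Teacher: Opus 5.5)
Your proof is correct, and it takes a different route from the paper. The paper never uses the face-norm equivalence \eqi{eq:face-bounds}{2} here. It bounds $\norm{\ell}{\varepsilon}$, with $\varepsilon \doteq \lproj{\Fh}{\k} z - \IHh \lproj{\FH}{\K} z$, through \eqi{eq:l2-representative}{2}, that is, by $\norm{\Th}{\isch \varepsilon} + \h \norm{\ah}{\varepsilon}$. The energy term is the first-order estimate \eqi{eq:ass-prolongation-first}{1} from Theorem~\ref{th:prolongation-verification}, combined with energy minimality of static condensation. The bulk term is split by inserting $\pm z$ and $\pm \theta$. Its delicate piece, $\norm{L^2(\dom)}{\theta - \isch \IHh \lproj{\FH}{\K} z}$, is handled by viewing $\ul{\mathcal{W}}_\ell \theta - \ulisch \Pavh \theta$ as a bubble, bounding its energy by \eqref{eq:polynomial-bubble-2}, and passing to $L^2$ with \eqref{eq:bubble-l2}. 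This is exactly where \eqref{eq:smooth-proximity-jumps} enters: the $H^2$ seminorm of $\theta$ comes from the integration by parts \eqref{eq:Wh-ibp}, and the jumps come from the skeletal correction $\ul{\xi}_T$. Your route never forms the discrete-harmonic extension of $\varepsilon$, so it needs neither bound. In fact the jump information is already implied by \eqref{eq:smooth-proximity-l2} through a trace inequality, since $\norm{F}{\jump{\theta}} \leq \norm{F}{z - \theta_T} + \norm{F}{z - \theta_{T'}}$, so your warning sign is a false alarm. Your argument buys a shorter, purely local proof that uses only \eqi{eq:face-bounds}{2}, the fine-cell trace inequality \eqi{eq:polytopal-inequalities}{3}, the reassignment of Lemma~\ref{lem:levelnorm-stability} and \eqref{eq:smooth-proximity-l2}, and it does not rely on Theorem~\ref{th:prolongation-verification}. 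The paper's argument instead stays with its strategy of treating the bulk and face-residual parts of \eqref{def:skeleton-inner-product} separately, as in the proof of \eqref{eq:skeletal-error}. The price is the extra regularity of $\theta$ recorded in \eqref{eq:smooth-proximity-jumps}.
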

\begin{proof}
  Write $\varepsilon \doteq \lproj{\Fh}{\k} z - \IHh \lproj{\FH}{\K} z$. By \eqref{eq:polynomial-bubble-2} applied to $\theta$, with \eqi{eq:smooth-proximity-jumps}{1} and \eqi{eq:smooth-proximity-jumps}{2},
  \[
    \norm{\uah}{\ul{\mathcal{W}}_\ell \theta - \ulisch \IHh \lproj{\FH}{\K} z} \lesssim \h \seminorm{H^2(\dom)}{z},
  \]
  Since $\IHh \lproj{\FH}{\K} z = \Pavh \theta$ by \eqref{eq:prolongations}, the two pairs share their face component, so their difference is a bubble by \eqref{def:Wh}; its cell component is $\theta - \isch \IHh \lproj{\FH}{\K} z$, $\lproj{\Th}{\k+1}\theta$ being $\theta$ on each fine cell. Hence \eqref{eq:bubble-l2} gives
  \begin{equation} \label{eq:lem-skel-approx-intermediary}
    \norm{L^2(\dom)}{\theta - \isch \IHh \lproj{\FH}{\K} z} \lesssim \h^2 \seminorm{H^2(\dom)}{z} .
  \end{equation}
  By \eqi{eq:l2-representative}{2} it suffices to bound $\norm{\Th}{\isch \varepsilon}$ at order $\H^2$ and $\norm{\ah}{\varepsilon}$ at order $\H$; the latter is \eqi{eq:ass-prolongation-first}{1}, established for both prolongations in Theorem~\ref{th:prolongation-verification}, with energy minimality of static condensation. For the former, inserting $\pm z$ and $\pm\theta$,
  \[
    \norm{\Th}{\isch \varepsilon}
    \leq \norm{L^2(\dom)}{\isch \lproj{\Fh}{\k} z - z} + \norm{L^2(\dom)}{z - \theta} + \norm{L^2(\dom)}{\theta - \isch \IHh \lproj{\FH}{\K} z},
  \]
  the last two terms bounded by $\H^2 \seminorm{H^2(\dom)}{z}$ through \eqi{eq:smooth-proximity-l2}{1} and \eqref{eq:lem-skel-approx-intermediary}. For the first, $\ul{\mathcal{J}}_\ell z - \ulisch \lproj{\Fh}{\k} z$ is a bubble with cell component $\lproj{\Th}{\k+1} z - \isch \lproj{\Fh}{\k} z$, so \eqref{eq:bubbles} bounds its $L^2(\dom)$ norm by $\h^2 \seminorm{H^2(\dom)}{z}$, and \eqi{eq:approximation-bulk}{1} removes $\lproj{\Th}{\k+1} z$.
\end{proof}

For arbitrary coarse data the second-order bound is false: the jump term in \eqref{eq:polynomial-bubble-2} is then only $O(\H^{1/2})$ times the energy, which is the order at which \eqi{eq:ass-prolongation-first}{2} saturates.

\section{Analysis of the smoother}\label{sec:smoother}

This section proves Assumption~\ref{ass:smoother} for the patch families of \sect{sec:smoother-def} and deduces Assumption~\ref{ass:smoothing}. Both rest on one observation. If $\mu \in \Uhb$ vanishes on $\partial t$, the local problem \eqref{eq:inverse-static-condensation-a} for $\isc{\ell,t}\mu$ has zero right-hand side and is positive definite on $\Pk{\k+1}(t)$ by \eqref{eq:local-coercivity} and \eqi{eq:interface-space-design}{1}, so $\isc{\ell,t}\mu = 0$ and the cell contributes no energy. Applying \eqref{eq:local-energy-bound} cell by cell therefore gives
\begin{equation} \label{eq:zero-extension-bound}
  \norm{\ah}{\mu}^2 \leq C_E \sum_{t \in \Th} h_t^{-1} \norm{\partial t}{\mu}^2 \qquad \forall \mu \in \Uhb,
\end{equation}
with $C_E$ the constant of \eqref{eq:local-energy-bound}, and only the cells on which $\mu$ does not vanish contribute. Moreover,
\begin{equation} \label{eq:face-splitting}
  \norm{\partial t}{\mu}^2 = \sum_{F \in \Fh(t) \cap \Fhi} \norm{F}{\mu}^2 .
\end{equation}

\subsection{Stable decomposition}\label{sec:stable-decomposition}

\begin{lemma}[Stable decomposition]\label{lem:stable-decomposition}
  Let Assumption~\ref{ass:patches} hold and let $\alpha_{r,F} \geq 0$ satisfy $\sum_{r : F \in \mathcal{F}_r} \alpha_{r,F} = 1$ for every $F \in \Fhi$. For $\lambda \in \Uhb$ let $\lambda_r \in N_r$ equal $\alpha_{r,F} \lambda|_F$ on $F \in \mathcal{F}_r$ and vanish elsewhere. Then $\lambda = \sum_{r \in \mathcal{D}} \lambda_r$ and
  \begin{equation} \label{eq:stable-decomposition}
    \sum_{r \in \mathcal{D}} \norm{\ah}{\lambda_r}^2 \lesssim \h^{-2} \norm{\ell}{\lambda}^2 .
  \end{equation}
\end{lemma}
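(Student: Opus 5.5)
The identity $\lambda = \sum_r \lambda_r$ is immediate, and I would verify it face by face. For $F \in \Fhi$ the only patches with $\lambda_r|_F \neq 0$ are those with $F \in \mathcal{F}_r$, and their weights sum to one. On boundary faces every $\lambda_r$ vanishes, as does $\lambda$. Each $\lambda_r$ belongs to $N_r$ by construction, since it vanishes on $\Fhi \setminus \mathcal{F}_r$. The first condition of Assumption~\ref{ass:patches} guarantees that the index set in the normalisation is nonempty for every interior face, so the weights can be chosen at all.

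For the energy bound I would apply \eqref{eq:zero-extension-bound} to each $\lambda_r$:
\[
  \norm{\ah}{\lambda_r}^2 \leq C_E \sum_{t \in \Th} h_t^{-1} \norm{\partial t}{\lambda_r}^2 .
\]
Then I would expand the face norms with \eqref{eq:face-splitting}, keeping only $F \in \Fh(t) \cap \mathcal{F}_r$. Since $0 \leq \alpha_{r,F} \leq 1$, we have $\norm{F}{\lambda_r}^2 = \alpha_{r,F}^2 \norm{F}{\lambda}^2 \leq \alpha_{r,F} \norm{F}{\lambda}^2$. Summing over $r$ and exchanging the sums over $r$, $t$ and $F$ gives
\[
  \sum_{r} \norm{\ah}{\lambda_r}^2 \leq C_E \sum_{t \in \Th} h_t^{-1} \sum_{F \in \Fh(t) \cap \Fhi} \Big( \sum_{r : F \in \mathcal{F}_r} \alpha_{r,F} \Big) \norm{F}{\lambda}^2 = C_E \sum_{t \in \Th} h_t^{-1} \norm{\partial t}{\lambda}^2 ,
\]
where the last equality uses the partition of unity together with $\lambda|_F = 0$ on boundary faces. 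This step does not even need the colouring bound, because the convex weights absorb the overlap. If one prefers not to use $\alpha^2 \leq \alpha$, one instead bounds $\alpha_{r,F} \leq 1$ and uses that a face lies in at most $\NO$-many patches, which follows from the colouring, at the cost of a constant.

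Finally, \eqi{eq:geometry-b}{2} gives $h_t \simeq \h$, so $h_t^{-1} \simeq \h^{-2} h_t$. The lower bound of \eqi{eq:face-bounds}{2} then gives $\sum_t h_t \norm{\partial t}{\lambda}^2 \lesssim \norm{\ell}{\lambda}^2$, which yields \eqref{eq:stable-decomposition}.

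The only delicate point is the use of \eqref{eq:zero-extension-bound} for functions supported on a patch. It rests on the observation that cells whose boundary $\lambda_r$ does not touch contribute nothing, and that \eqref{eq:local-energy-bound} holds for arbitrary face data on $\partial t$, including data vanishing on part of it. Since both facts are stated just before the lemma, I expect no real obstacle; the rest is bookkeeping of the sums.
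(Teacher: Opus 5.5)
Your proposal is correct and follows the paper's proof essentially step for step. You apply \eqref{eq:zero-extension-bound} to each $\lambda_r$, use $\alpha_{r,F}^2 \leq \alpha_{r,F}$ together with the partition of unity to collapse the overlap after exchanging the sums, and then conclude with \eqref{eq:face-splitting}, quasi-uniformity \eqi{eq:geometry-b}{2} and the lower bound in \eqi{eq:face-bounds}{2}, with only the covering part of Assumption~\ref{ass:patches} needed, exactly as in the paper.
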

\begin{proof}
  That $\lambda = \sum_r \lambda_r$ is the partition of unity, every interior face being covered by Assumption~\ref{ass:patches}. Each $\lambda_r$ vanishes outside the cells meeting $\mathcal{F}_r$, so \eqref{eq:zero-extension-bound} and $\alpha_{r,F} \leq 1$ give
  \[
    \norm{\ah}{\lambda_r}^2 \leq C_E \sum_{t} h_t^{-1} \sum_{F \in \Fh(t) \cap \mathcal{F}_r} \alpha_{r,F}^2 \norm{F}{\lambda}^2 ,
  \]
  the outer sum running over the cells meeting $\mathcal{F}_r$. Summing over $r$ and exchanging the finite sums, $\sum_{r : F \in \mathcal{F}_r} \alpha_{r,F}^2 \leq \sum_{r : F \in \mathcal{F}_r} \alpha_{r,F} = 1$, so
  \[
    \sum_{r \in \mathcal{D}} \norm{\ah}{\lambda_r}^2
    \leq C_E \sum_{t \in \Th} h_t^{-1} \sum_{F \in \Fh(t) \cap \Fhi} \norm{F}{\lambda}^2
    \just{=}{\eqref{eq:face-splitting}} C_E \sum_{t \in \Th} h_t^{-1} \norm{\partial t}{\lambda}^2
    \just{\lesssim}{\eqi{eq:geometry-b}{2}} \h^{-2} \sum_{t \in \Th} h_t \norm{\partial t}{\lambda}^2
    \just{\lesssim}{\eqi{eq:face-bounds}{2}} \h^{-2} \norm{\ell}{\lambda}^2 .
  \]
  The exchange is an identity, a cell meets $\mathcal{F}_r$ exactly when one of its faces lies in $\mathcal{F}_r$, so no bound on the number of faces per cell is needed.
\end{proof}

For the face-star family each interior face generates its own patch, $\alpha_{r,F} = 1$, and \eqref{eq:stable-decomposition} holds with no further hypothesis.

It remains to bound $\rho(\Dh)$. Since the local solvers are exact, $\Dh = \sum_{r \in \mathcal{D}} P_r$ with $P_r \doteq I_r A_r^{-1} I_r' \Ah$ the $\ah$-orthogonal projection onto $N_r$: for $\eta \in N_r$,
$\ah(P_r \mu, \eta) = \braket{I_r' \Ah \mu, \eta}_\ell = \braket{\Ah \mu, I_r \eta}_\ell = \ah(\mu, \eta)$. Taking $\eta = P_r \mu$ in the previous identity,
\begin{equation} \label{eq:patch-projection}
  \norm{\ah}{P_r \mu}^2 = \ah(P_r \mu, \mu) \qquad \forall \mu \in \Uhb, \ r \in \mathcal{D} .
\end{equation}

\begin{lemma}[Interaction bound]\label{lem:interaction}
  Let \Aref{ass:patches} hold. Then $\rho(\Dh) \leq \NO$.
\end{lemma}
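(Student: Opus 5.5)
Since $\Dh = \sum_{r \in \mathcal{D}} P_r$ with each $P_r$ the $\ah$-orthogonal projection onto $N_r$, $\Dh$ is $\ah$-self-adjoint and positive semidefinite. Hence $\rho(\Dh) = \sup_{\mu \neq 0} \ah(\Dh \mu,\mu)/\ah(\mu,\mu)$, and it suffices to show $\ah(\Dh\mu,\mu) \leq \NO\, \ah(\mu,\mu)$ for every $\mu \in \Uhb$. I will do this by the colouring argument: split the sum over patches by colour and show that each colour class contributes at most $\ah(\mu,\mu)$.

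Fix a colouring from Assumption~\ref{ass:patches}, with classes $\mathcal{D}_1,\dots,\mathcal{D}_{\NO}$, so that $\omega_r \cap \omega_{r'} = \emptyset$ for distinct $r,r'$ in the same class. For a class $\mathcal{D}_c$ set $Q_c \doteq \sum_{r \in \mathcal{D}_c} P_r$.

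\emph{Step 1: $Q_c$ is a projection.} For $r \neq r'$ in $\mathcal{D}_c$, the observation after Figure~\ref{fig:smoothers} says that condensed extensions of elements of $N_r$ vanish outside $\omega_r$. Disjoint supports therefore give $\ah(\eta,\eta') = 0$ for $\eta \in N_r$ and $\eta' \in N_{r'}$, so the spaces $N_r$, $r \in \mathcal{D}_c$, are mutually $\ah$-orthogonal. Consequently $P_r P_{r'} = 0$ for $r \neq r'$: the image of $P_{r'}$ lies in $N_{r'}$, which is $\ah$-orthogonal to $N_r$, so $P_r$ annihilates it. Together with $P_r^2 = P_r$ this shows that $Q_c$ is the $\ah$-orthogonal projection onto $\bigoplus_{r \in \mathcal{D}_c} N_r$.

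\emph{Step 2: contraction.} A projection does not increase the energy, so, using \eqref{eq:patch-projection} and orthogonality,
\[
  \ah(Q_c \mu,\mu) = \sum_{r \in \mathcal{D}_c} \norm{\ah}{P_r\mu}^2 = \norm{\ah}{Q_c\mu}^2 \leq \ah(\mu,\mu).
\]

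\emph{Step 3: summation.} Summing over the $\NO$ colours gives $\ah(\Dh\mu,\mu) \leq \NO\,\ah(\mu,\mu)$, and the claim follows.

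The only delicate point is the orthogonality in Step 1. It requires that an element of $N_r$ has zero energy contribution on every cell outside $\omega_r$. This holds because on such a cell the face data vanish on all of $\partial t$, so $\isc{\ell,t}\mu = 0$ by the argument preceding \eqref{eq:zero-extension-bound}. Two patches with disjoint supports then share no cell carrying energy for both, and the local forms $\ual{\ell,t}$ of their extensions never overlap. No counting of faces per cell enters the argument.
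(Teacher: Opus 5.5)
Your proof is correct and follows essentially the same colouring argument as the paper. Both rest on the $\ah$-orthogonality of the patch spaces within one colour class, which comes from disjoint supports, and then pay a factor $\NO$ when summing over colours. The only difference is bookkeeping: you bound the Rayleigh quotient by noting that each colour block $Q_c$ is an $\ah$-orthogonal projection, whereas the paper bounds $\norm{\ah}{\Dh\mu}$ directly, using the triangle and Cauchy--Schwarz inequalities together with \eqref{eq:patch-projection}.
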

\begin{proof}
  Group the patches by colour, $\Dh \mu = \sum_{c=1}^{\NO} w_c$ with $w_c \doteq \sum_{r \in \mathcal{D}_c} P_r \mu$ and $\mathcal{D}_c$ the patches of colour $c$. Within a colour the supports are disjoint, so the corresponding local forms do not interact and
  \[
    \norm{\ah}{w_c}^2 = \sum_{r \in \mathcal{D}_c} \norm{\ah}{P_r \mu}^2 .
  \]
  By the triangle inequality and Cauchy--Schwarz in $\mathbb{R}^{\NO}$,
  \[
    \norm{\ah}{\Dh \mu}^2 \leq \Big( \sum_{c=1}^{\NO} \norm{\ah}{w_c} \Big)^2
    \leq \NO \sum_{c=1}^{\NO} \norm{\ah}{w_c}^2
    = \NO \sum_{r \in \mathcal{D}} \norm{\ah}{P_r \mu}^2 .
  \]
  By \eqref{eq:patch-projection} the last sum is $\ah(\Dh \mu, \mu) \leq \norm{\ah}{\Dh \mu} \norm{\ah}{\mu}$. Dividing gives $\norm{\ah}{\Dh \mu} \leq \NO \norm{\ah}{\mu}$, and $\Dh$ is self-adjoint for $\ah$.
\end{proof}

Together with Lemma~\ref{lem:stable-decomposition} this establishes \Aref{ass:smoother} with $c_0 = \NO$.

\subsection{The smoothing hypothesis}\label{sec:smoothing-proof}

\begin{theorem}[Smoothing]\label{th:smoother-cond-number}
  Let Assumption~\ref{ass:smoother} hold and let $\omega < 2/c_0$. Then Assumption~\ref{ass:smoothing} holds with $C_S = c_1 / \big(\omega (2 - \omega c_0)\big)$.
\end{theorem}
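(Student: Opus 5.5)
We work with $\Sh' = \Sh$, so $\Kh = 2\Sh - \Sh\Ah\Sh$ by Remark~\ref{rem:symmetric-smoothers}. Write $T \doteq \Sh$, which is $\braket{\cdot,\cdot}_\ell$-self-adjoint and positive semidefinite (each $I_r A_r^{-1} I_r'$ is). Writing $\Sh = \omega \Sh_0$ with $\Sh_0 \doteq \sum_r I_r A_r^{-1} I_r'$, we have
\[
  \braket{\Kh\lambda,\lambda}_\ell = 2\omega \braket{\Sh_0\lambda,\lambda}_\ell - \omega^2 \braket{\Ah \Sh_0\lambda, \Sh_0\lambda}_\ell .
\]
The plan has three steps: bound the second term by the first using $\rho(\Dh)\le c_0$, then bound $\braket{\Sh_0\lambda,\lambda}_\ell$ from below using the stable decomposition \eqref{eq:ass-smoother}, and finally collect constants.

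\emph{Step 1.} Since $\Dh = \Sh_0\Ah$ is $\ah$-self-adjoint with $\rho(\Dh)\le c_0$, we have $\ah(\Dh v,v)\le c_0\,\ah(v,v)$ for every $v$. I take $v = \Ah^{-1}\mu$ and set $\mu=\lambda$, so that $\Sh_0\lambda = \Dh v$. This gives
\[
  \braket{\Ah\Sh_0\lambda,\Sh_0\lambda}_\ell = \ah(\Dh v,\Dh v) = \ah(\Dh^2 v, v) \le c_0\,\ah(\Dh v, v) = c_0 \braket{\Sh_0\lambda,\lambda}_\ell ,
\]
where the inequality uses that $\Dh$ is positive semidefinite in the $\ah$ inner product with spectrum in $[0,c_0]$. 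Hence $\braket{\Kh\lambda,\lambda}_\ell \ge \omega(2-\omega c_0)\braket{\Sh_0\lambda,\lambda}_\ell$, and this is positive because $\omega<2/c_0$.

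\emph{Step 2.} I need $\braket{\Sh_0\lambda,\lambda}_\ell \ge c_1^{-1}\h^2\norm{\ell}{\lambda}^2$, which is the standard additive Schwarz lower bound. Take the decomposition $\lambda=\sum_r\lambda_r$ from Assumption~\ref{ass:smoother}. For each $r$,
\[
  \braket{\lambda_r, I_r'\lambda}_\ell = \braket{A_r\lambda_r, A_r^{-1}I_r'\lambda}_\ell \le \norm{A_r}{\lambda_r}\,\braket{A_r^{-1}I_r'\lambda, I_r'\lambda}_\ell^{1/2}.
\]
Summing over $r$ and applying Cauchy--Schwarz gives
\[
  \norm{\ell}{\lambda}^2 = \sum_r \braket{\lambda_r, I_r'\lambda}_\ell \le \Big(\sum_r\norm{A_r}{\lambda_r}^2\Big)^{1/2} \braket{\Sh_0\lambda,\lambda}_\ell^{1/2}.
\]
By \eqref{eq:ass-smoother} the first factor is at most $c_1^{1/2}\h^{-1}\norm{\ell}{\lambda}$. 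Dividing by $\norm{\ell}{\lambda}$ and squaring yields $\h^2\norm{\ell}{\lambda}^2 \le c_1 \braket{\Sh_0\lambda,\lambda}_\ell$.

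\emph{Step 3.} Combining the two steps gives $\h^2\norm{\ell}{\lambda}^2 \le \frac{c_1}{\omega(2-\omega c_0)}\braket{\Kh\lambda,\lambda}_\ell$, which is \eqref{eq:ass-smoothing} with the stated $C_S$.

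The main obstacle is Step 1, specifically the identity $\Sh_0\lambda = \Dh\Ah^{-1}\lambda$ and keeping the $\ah$ and $\braket{\cdot,\cdot}_\ell$ adjoints straight so that the inequality $\ah(\Dh^2v,v)\le c_0\,\ah(\Dh v,v)$ is applied legitimately. That inequality holds because $\Dh$ is $\ah$-symmetric and positive semidefinite, with nonnegative eigenvalues bounded by $c_0$.
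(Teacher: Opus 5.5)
Your proof is correct and has the same structure as the paper's: a lower bound $\braket{\Sh\lambda,\lambda}_\ell \geq \omega c_1^{-1}\h^2\norm{\ell}{\lambda}^2$ from the stable decomposition, then $\braket{\Kh\lambda,\lambda}_\ell \geq (2-\omega c_0)\braket{\Sh\lambda,\lambda}_\ell$ from $\rho(\Dh)\leq c_0$. The differences are technical. You prove the additive Schwarz bound by a direct Cauchy--Schwarz argument, whereas the paper cites the subspace correction identity for $\Sh^{-1}$, which first requires showing that $\Sh$ is invertible. And you bound $\braket{\Ah\Sh\lambda,\Sh\lambda}_\ell$ through the $\ah$-spectral calculus of $\Dh$ with $v=\Ah^{-1}\lambda$, whereas the paper uses the similarity of $\Sh^{1/2}\Ah\Sh^{1/2}$ and $\Sh\Ah$.
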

\begin{proof}
  By \eqref{eq:patch-projection}, $\ah(\Dh\mu,\mu) = \sum_{r \in \mathcal{D}} \norm{\ah}{P_r \mu}^2 \geq 0$, and it vanishes only if $\mu$ is $\ah$-orthogonal to every $N_r$, hence to $\sum_r N_r = \Uhb$ by the decomposition clause of Assumption~\ref{ass:smoother}, so $\mu = 0$. With $\rho(\Dh) \leq c_0$ the spectrum of $\Sh\Ah = \omega\Dh$ lies in $(0, \omega c_0] \subset (0,2)$, so $\Sh$ is invertible and $\norm{\ah}{(\identity - \Sh \Ah)\mu} \leq \norm{\ah}{\mu}$ for all $\mu \in \Uhb$, which is the role of $\omega < 2/c_0$.

  The subspace correction identity \cite[Lemma 2.4]{xu2002} and Assumption~\ref{ass:smoother} give
  \[
    \braket{\Sh^{-1} \lambda, \lambda}_\ell = \omega^{-1} \ah(\Dh^{-1}\lambda, \lambda)
    = \omega^{-1} \inf_{\substack{\lambda_r \in N_r \\ \sum_r \lambda_r = \lambda}} \sum_{r \in \mathcal{D}} \norm{A_r}{\lambda_r}^2
    \leq \omega^{-1} c_1 \h^{-2} \norm{\ell}{\lambda}^2 .
  \]
  Since $\Sh$ is self-adjoint and positive definite for $\braket{\cdot,\cdot}_\ell$, an upper bound on the largest eigenvalue of $\Sh^{-1}$ is a lower bound on the smallest of $\Sh$, thus $\braket{\Sh \lambda, \lambda}_\ell \geq \omega c_1^{-1} \h^{2} \norm{\ell}{\lambda}^2$.

  It remains to transfer the previous bounds to $\Kh$. As $\Sh' = \Sh$, \eqref{eq:symmetrised-smoother} reads $\Kh = 2\Sh - \Sh \Ah \Sh$, so $\braket{\Kh \lambda,\lambda}_\ell = 2\braket{\Sh\lambda,\lambda}_\ell - \braket{\Ah \Sh \lambda, \Sh \lambda}_\ell$. With $\Sh^{1/2}$ the positive square root, $\Sh^{1/2}\Ah\Sh^{1/2}$ is self-adjoint and similar to $\Sh\Ah = \omega \Dh$, hence has spectrum in $(0, \omega c_0]$, so $\braket{\Ah \Sh \lambda, \Sh \lambda}_\ell \leq \omega c_0 \braket{\Sh\lambda,\lambda}_\ell$. Since $\omega c_0 < 2$,
  \[
    \braket{\Kh \lambda, \lambda}_\ell \geq (2 - \omega c_0) \braket{\Sh \lambda,\lambda}_\ell
    \geq \frac{\omega (2 - \omega c_0)}{c_1} \, \h^{2} \norm{\ell}{\lambda}^2 . \qedhere
  \]
\end{proof}

With $c_0 = \NO$, the damping condition reads $\omega < 2/\NO$.

\section{Convergence}\label{sec:main-result}

It remains to derive Assumptions~\ref{ass:bounded} and \ref{ass:regularity-approx} from \Aref{ass:prolongation}. Throughout, $\Ph$ is the coarse projection \eqref{eq:coarse-projection} and $C_I$ the constant of \eqi{eq:ass-prolongation-stab}{1}.

\subsection{Bounded coarse correction}\label{sec:bounded-correction}

\begin{lemma}[Bounded coarse correction]\label{lem:bounded-correction}
  Let \eqi{eq:ass-prolongation-stab}{1} hold. Then, for all $\lambda \in \Uhb$,
  \begin{subequations} \label{eq:bounded-correction}
  \begin{align}
    \norm{\aH}{\Ph \lambda} &\leq C_I \norm{\ah}{\lambda}, \label{eq:P-stability} \\
    \norm{\ah}{\lambda - \IHh \Ph \lambda} &\leq (1 + C_I^4)^{1/2} \norm{\ah}{\lambda}, \label{eq:correction-stability}
  \end{align}
  \end{subequations}
  and \Aref{ass:bounded} holds with $C_B = (1 + C_I^4)^{1/2}$.
\end{lemma}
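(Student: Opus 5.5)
For \eqref{eq:P-stability} I will use the definition \eqref{eq:coarse-projection} directly. Testing with $\mu = \Ph \lambda$ and applying Cauchy--Schwarz for $\ah$ followed by \eqi{eq:ass-prolongation-stab}{1} gives
\[
  \norm{\aH}{\Ph\lambda}^2 = \ah(\lambda, \IHh \Ph \lambda) \leq \norm{\ah}{\lambda} \norm{\ah}{\IHh \Ph\lambda} \leq C_I \norm{\ah}{\lambda} \norm{\aH}{\Ph\lambda}.
\]
Dividing by $\norm{\aH}{\Ph\lambda}$, or noting that the claim is trivial when this vanishes, yields \eqref{eq:P-stability}.

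For \eqref{eq:correction-stability} I will expand the square instead of using the triangle inequality, because the triangle inequality only gives $1 + C_I^2$:
\[
  \norm{\ah}{\lambda - \IHh\Ph\lambda}^2 = \norm{\ah}{\lambda}^2 - 2\ah(\lambda, \IHh\Ph\lambda) + \norm{\ah}{\IHh\Ph\lambda}^2 .
\]
By \eqref{eq:coarse-projection} with $\mu = \Ph\lambda$, the cross term equals $-2\norm{\aH}{\Ph\lambda}^2 \leq 0$, so it can be dropped. The last term is controlled by \eqi{eq:ass-prolongation-stab}{1} and then \eqref{eq:P-stability}:
\[
  \norm{\ah}{\IHh\Ph\lambda}^2 \leq C_I^2 \norm{\aH}{\Ph\lambda}^2 \leq C_I^4 \norm{\ah}{\lambda}^2 .
\]
Adding the pieces gives $\norm{\ah}{\lambda - \IHh\Ph\lambda}^2 \leq (1 + C_I^4)\norm{\ah}{\lambda}^2$. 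This is \eqref{eq:correction-stability}, and it is \Aref{ass:bounded} with the stated $C_B$.

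The only step that needs care is recognising that the cross term has a sign. Since the coarse form is not inherited, $\IHh\Ph$ is not an $\ah$-orthogonal projection, so the Pythagorean identity is unavailable. However, \eqref{eq:coarse-projection} still turns $\ah(\lambda, \IHh\Ph\lambda)$ into the nonnegative quantity $\norm{\aH}{\Ph\lambda}^2$, and this is what produces the exponent $4$ in $C_B$.
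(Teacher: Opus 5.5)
Your proposal is correct and follows essentially the same argument as the paper: you test \eqref{eq:coarse-projection} with $\mu = \Ph\lambda$ to obtain \eqref{eq:P-stability}, then expand the square and use \eqref{eq:coarse-projection} again to turn the cross term into $-2\norm{\aH}{\Ph\lambda}^2$. The last term is then bounded by \eqi{eq:ass-prolongation-stab}{1} combined with \eqref{eq:P-stability}.
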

\begin{proof}
  Testing \eqref{eq:coarse-projection} with $\mu = \Ph \lambda$,
  \[
    \norm{\aH}{\Ph \lambda}^2 = \ah(\lambda, \IHh \Ph \lambda)
    \leq \norm{\ah}{\lambda} \norm{\ah}{\IHh \Ph \lambda}
    \just{\leq}{\eqi{eq:ass-prolongation-stab}{1}} C_I \norm{\ah}{\lambda} \norm{\aH}{\Ph \lambda},
  \]
  which is \eqref{eq:P-stability}. Expanding the square and using \eqref{eq:coarse-projection} once more,
  \[
    \norm{\ah}{\lambda - \IHh \Ph \lambda}^2
    = \norm{\ah}{\lambda}^2 - 2 \norm{\aH}{\Ph \lambda}^2 + \norm{\ah}{\IHh \Ph \lambda}^2
    \just{\leq}{\eqi{eq:ass-prolongation-stab}{1},\,\eqref{eq:P-stability}} (1 + C_I^4) \norm{\ah}{\lambda}^2 . \qedhere
  \]
\end{proof}

\subsection{A first-order bound in the skeletal norm}\label{sec:levelnorm-first-order}

\Aref{ass:regularity-approx} needs two ingredients: a first-order bound on the coarse-grid correction, valid for every $\lambda \in \Uhb$, and second-order two-level estimates, available for discrete solutions only. Here is the first.

\begin{lemma}[First-order skeletal bound]\label{lem:levelnorm-OH}
  Let \Aref{ass:regularity}, \eqi{eq:ass-prolongation-stab}{1} and \eqref{eq:ass-prolongation-first} hold. Then, for all $\lambda \in \Uhb$,
  \begin{equation} \label{eq:levelnorm-OH}
    \norm{\ell}{\lambda - \IHh \Ph \lambda} \lesssim \H \norm{\ah}{\lambda} .
  \end{equation}
\end{lemma}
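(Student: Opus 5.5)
Set $e \doteq \lambda - \IHh \Ph \lambda$. By \eqi{eq:l2-representative}{2}, $\norm{\ell}{e} \leq \norm{\Th}{\isch e} + \h \norm{\ah}{e}$. The face term is disposed of by Lemma~\ref{lem:bounded-correction}: $\h\norm{\ah}{e} \leq C_B \h \norm{\ah}{\lambda} \lesssim \H \norm{\ah}{\lambda}$. Everything therefore reduces to a duality bound $\norm{\Th}{\isch e} \lesssim \H \norm{\ah}{\lambda}$. Only first order is needed, so the duality argument has to produce just one factor of $\H$.

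For the cell term, take $g \doteq \isch e \in L^2(\dom)$ and let $u_g$ solve \eqref{eq:weak-poisson}. Assumption~\ref{ass:regularity} gives $\seminorm{H^2(\dom)}{u_g} \lesssim \norm{L^2(\dom)}{g}$. Let $\psi \doteq \Gh g \in \Uhb$, the level-$\ell$ discrete solution, and $\psi_{\ell-1} \doteq \GH g$ its coarse counterpart. By \eqref{eq:discrete-poisson-skeleton},
\[
  \norm{\Th}{\isch e}^2 = (g, \isch e)_{\Th} = \ah(\psi, e).
\]
The defining relation \eqref{eq:coarse-projection} gives the Galerkin orthogonality $\ah(e, \IHh \mu) = 0$ for all $\mu \in \UHb$. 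Hence
\[
  \ah(\psi, e) = \ah(\psi - \IHh \mu, e) \leq C_B \norm{\ah}{\psi - \IHh \mu}\, \norm{\ah}{\lambda}
\]
for any $\mu$, and it suffices to find $\mu$ with $\norm{\ah}{\psi - \IHh \mu} \lesssim \H \norm{L^2(\dom)}{g}$.

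Choose $\mu = \lproj{\FH}{\K} u_g$ and split $\psi - \IHh \mu = (\psi - \lproj{\Fh}{\k} u_g) + (\lproj{\Fh}{\k} u_g - \IHh \lproj{\FH}{\K} u_g)$. The first piece is $\lesssim \h \norm{L^2(\dom)}{g}$ by Theorem~\ref{th:error}. For the second, energy minimality of static condensation gives $\norm{\ah}{\cdot} \leq \norm{\uah}{\ul{\cdot}}$ for any hybrid pair sharing the same face component. Now $\ul{\mathcal{J}}_\ell u_g - \ul{I}_\ell \ul{\mathcal{J}}_{\ell-1} u_g$ has face component exactly $\lproj{\Fh}{\k} u_g - \IHh \lproj{\FH}{\K} u_g$, since $\trH \ul{\mathcal{J}}_{\ell-1} u_g = \lproj{\FH}{\K} u_g$. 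Therefore \eqi{eq:ass-prolongation-first}{1} bounds the second piece by $\H \seminorm{H^2(\dom)}{u_g} \lesssim \H \norm{L^2(\dom)}{g}$. Dividing by $\norm{\Th}{\isch e} = \norm{L^2(\dom)}{g}$ closes the argument.

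The main point to check is that \eqref{eq:coarse-projection} really yields the Galerkin orthogonality used above. Testing with $\IHh\mu$ gives $\ah(\lambda, \IHh \mu) = \aH(\Ph\lambda,\mu)$, but the coarse form is not inherited, so $\ah(\IHh\Ph\lambda, \IHh\mu) \neq \aH(\Ph\lambda,\mu)$ in general, and $e$ need not be $\ah$-orthogonal to the range of $\IHh$. If orthogonality fails, I would instead write
\[
  \ah(\psi, e) = \ah(\psi, \lambda) - \aH(\GH g', \Ph\lambda) + \text{(consistency)},
\]
with $g'$ chosen so that the coarse and fine discrete solutions pair correctly. The terms would then be compared through \eqi{eq:ass-prolongation-first}{2} (the $L^2$ consistency $\iscH \mu - \isch \IHh\mu$), together with the identity $\ah(\psi,\IHh\Ph\lambda) = (g, \isch\IHh\Ph\lambda)$ and $\aH(\GH g,\Ph\lambda) = (g, \iscH\Ph\lambda)$. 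Their difference is $(g, \isch\IHh\Ph\lambda - \iscH\Ph\lambda) \lesssim \H\norm{L^2(\dom)}{g}\norm{\aH}{\Ph\lambda}$, and \eqref{eq:P-stability} controls $\norm{\aH}{\Ph\lambda}$. The remaining term is $\ah(\psi,\lambda) - \aH(\GH g, \Ph\lambda) = \ah(\psi - \IHh\GH g, \lambda)$, and I would bound it by Theorem~\ref{th:error} at both levels together with \eqi{eq:ass-prolongation-first}{1} as above. This robust route is the one I would actually write out.
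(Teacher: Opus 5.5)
Your fallback route is correct and is the one to keep. Your doubt about the first route is justified. Relation \eqref{eq:coarse-projection} only gives $\ah(\lambda,\IHh\mu)=\aH(\Ph\lambda,\mu)$, and $\aH$ is not $\ah(\IHh\cdot,\IHh\cdot)$. So $e=\lambda-\IHh\Ph\lambda$ is not $\ah$-orthogonal to the range of $\IHh$, and the step $\ah(\psi,e)=\ah(\psi-\IHh\mu,e)$ must be deleted.

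The second route closes. Test the fine problem with $e$ and with $\IHh\Ph\lambda$, the coarse problem with $\Ph\lambda$, and \eqref{eq:coarse-projection} with $\mu=\GH g$. This gives
\[
  \norm{\Th}{\isch e}^2=\ah(\Gh g,e)=\ah(\Gh g-\IHh\GH g,\lambda)+(g,\iscH\Ph\lambda-\isch\IHh\Ph\lambda)_{\dom}.
\]
The last term is $\lesssim\H\norm{L^2(\dom)}{g}\norm{\ah}{\lambda}$ by \eqi{eq:ass-prolongation-first}{2} and \eqref{eq:P-stability}. Your three-term splitting gives $\norm{\ah}{\Gh g-\IHh\GH g}\lesssim\H\norm{L^2(\dom)}{g}$. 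The coarse piece $\IHh(\lproj{\FH}{\K}u_g-\GH g)$ needs \eqi{eq:ass-prolongation-stab}{1} on top of Theorem~\ref{th:error} at level $\ell-1$. That is among the hypotheses, but you should say so explicitly.

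The paper argues differently. It dualises with the exact solution $z$ for the datum $\isch e$ and inserts $\iscH\Ph\lambda$ in $L^2(\dom)$. Using that bubbles are $\uah$-orthogonal to discrete-harmonic pairs, it writes $(-\Delta z,\isch\lambda-\iscH\Ph\lambda)_{\dom}=\Eh(z;\ulisch\lambda)-\EH(z;\ulisc{\ell-1}\Ph\lambda)+\ah(\lproj{\Fh}{\k}z-\IHh\lproj{\FH}{\K}z,\lambda)$. The cross-mesh term $(-\Delta z,\iscH\Ph\lambda-\isch\IHh\Ph\lambda)_{\dom}$ is identical to yours.

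So the paper uses Lemma~\ref{lem:consistency} at both levels where you use the energy bound of Theorem~\ref{th:error} at both levels. That bound is itself an immediate consequence of consistency, so the ingredients are equivalent. The paper needs stability of $\IHh$ only through \eqref{eq:P-stability}, whereas you also apply it to the coarse discretisation error.

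In exchange, your version isolates a first-order two-level energy estimate, the energy counterpart of \eqref{eq:two-level-l2}. Its second term is exactly the right-hand side of \eqref{eq:e-identity} with $\mu=\Ph\lambda$. This lemma and Lemma~\ref{lem:two-level} then run on the same discrete-solution-operator mechanics. The paper's argument is more direct and never introduces the discrete duals $\Gh g$, $\GH g$.
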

\begin{proof}
  Write $\varepsilon \doteq \lambda - \IHh \Ph \lambda$.

  \emph{Step 1 (the bulk representative).} Assume $\isch \varepsilon \neq 0$ and let $z$ solve \eqref{eq:weak-poisson} with datum $\isch \varepsilon$. Inserting $\iscH \Ph \lambda$,
  \[
    \norm{L^2(\dom)}{\isch \varepsilon}^2
    = (-\Delta z, \isch \lambda - \iscH \Ph \lambda)_{\dom}
    + (-\Delta z, \iscH \Ph \lambda - \isch \IHh \Ph \lambda)_{\dom}
    \doteq \mathfrak{T}_1 + \mathfrak{T}_2 ,
  \]
  the two bulk representatives living on different meshes but both in $L^2(\dom)$. Bubbles are $\uah$-orthogonal to discrete-harmonic pairs by \eqref{eq:inverse-static-condensation-a}, so $\uah(\ul{\mathcal{J}}_\ell z, \ulisch \mu) = \ah(\lproj{\Fh}{\k} z, \mu)$ for all $\mu \in \Uhb$, and likewise on level $\ell-1$. Hence, by \eqref{eq:interpolator-consistency} at both levels and \eqref{eq:coarse-projection},
  \[
    \mathfrak{T}_1 = \Eh(z; \ulisch \lambda) - \EH(z; \ulisc{\ell-1} \Ph \lambda)
    + \ah\big( \lproj{\Fh}{\k} z - \IHh \lproj{\FH}{\k} z, \lambda \big) .
  \]
  The first two terms are bounded by $\H \seminorm{H^2(\dom)}{z} \norm{\ah}{\lambda}$ using \eqref{eq:consistency-bound} at the two levels, \eqref{eq:P-stability} and \eqi{eq:geometry-b}{3}. For the third, the skeletal component of $\ul{\mathcal{J}}_\ell z - \ul{I}_\ell \ul{\mathcal{J}}_{\ell-1} z$ is $\lproj{\Fh}{\k} z - \IHh \lproj{\FH}{\k} z$ by \eqref{eq:prolongations}, so energy minimality of static condensation gives
  \[
    \ah\big( \lproj{\Fh}{\k} z - \IHh \lproj{\FH}{\k} z, \lambda \big)
    \leq \norm{\uah}{\ul{\mathcal{J}}_\ell z - \ul{I}_\ell \ul{\mathcal{J}}_{\ell-1} z} \norm{\ah}{\lambda}
    \just{\lesssim}{\eqi{eq:ass-prolongation-first}{1}} \H \seminorm{H^2(\dom)}{z} \norm{\ah}{\lambda} .
  \]
  The same bound holds for $\mathfrak{T}_2$ by \eqi{eq:ass-prolongation-first}{2} and \eqref{eq:P-stability}. Collecting, using $\seminorm{H^2(\dom)}{z} \lesssim \norm{L^2(\dom)}{\isch \varepsilon}$ from \Aref{ass:regularity} and dividing,
  \begin{equation} \label{eq:duality-OH}
    \norm{L^2(\dom)}{\isch \varepsilon} \lesssim \H \norm{\ah}{\lambda} .
  \end{equation}

  \emph{Step 2 (conclusion).} The bound \eqref{eq:duality-OH} is trivially true when $\isch \varepsilon = 0$, so
  \[
    \norm{\ell}{\varepsilon} \just{\leq}{\eqi{eq:l2-representative}{2}} \norm{\Th}{\isch \varepsilon} + \h \norm{\ah}{\varepsilon}
    \just{\lesssim}{\eqref{eq:duality-OH},\,\eqref{eq:correction-stability}} \H \norm{\ah}{\lambda} + \h \norm{\ah}{\lambda}
    \just{\lesssim}{\eqi{eq:geometry-b}{3}} \H \norm{\ah}{\lambda} . \qedhere
  \]
\end{proof}

\subsection{Two-level estimates for discrete solutions}\label{sec:two-level}

\begin{lemma}[Two-level estimates]\label{lem:two-level}
  Let \Aref{ass:regularity}, \eqi{eq:ass-prolongation-stab}{2}, \eqi{eq:ass-prolongation-first}{2} and \eqref{eq:ass-prolongation-approx} hold. Then, for all $g \in L^2(\dom)$,
  \begin{subequations} \label{eq:two-level}
  \begin{align}
    \norm{\ell}{\Gh g - \IHh \GH g} &\lesssim \H^2 \norm{L^2(\dom)}{g}, \label{eq:two-level-l2} \\
    \norm{\ell-1}{\GH g - \Ph \Gh g} &\lesssim \H^2 \norm{L^2(\dom)}{g} . \label{eq:two-level-projection}
  \end{align}
  \end{subequations}
\end{lemma}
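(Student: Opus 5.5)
The plan is to derive \eqref{eq:two-level-l2} from the single-level error estimate Theorem~\ref{th:error} by inserting skeletal projections of the exact solution. Then \eqref{eq:two-level-projection} follows by a duality argument on the coarse level that reduces it to \eqref{eq:two-level-l2}. Throughout, $u_g$ is the solution of \eqref{eq:weak-poisson} with datum $g$, and $\seminorm{H^2(\dom)}{u_g} \lesssim \norm{L^2(\dom)}{g}$ by Assumption~\ref{ass:regularity}.

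For \eqref{eq:two-level-l2} I split
\[
  \Gh g - \IHh \GH g = (\Gh g - \lproj{\Fh}{\k} u_g) + (\lproj{\Fh}{\k} u_g - \IHh \lproj{\FH}{\K} u_g) + \IHh(\lproj{\FH}{\K} u_g - \GH g).
\]
The middle term is $\lesssim \H^2 \norm{L^2(\dom)}{g}$ directly by \eqref{eq:ass-prolongation-approx}. The last term is reduced to level $\ell-1$ by \eqi{eq:ass-prolongation-stab}{2}, so it suffices to prove $\norm{\ell}{\Gh g - \lproj{\Fh}{\k} u_g} \lesssim \h^2 \norm{L^2(\dom)}{g}$ on an arbitrary level. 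For that bound I use \eqi{eq:l2-representative}{2}. The energy part, multiplied by $\h$, is $\h \cdot \h \norm{L^2(\dom)}{g}$ by the first half of \eqref{eq:error}. For the bulk part I insert $\pm u_g$. The quantity $\norm{L^2(\dom)}{\isch \Gh g - u_g}$ is $O(\h^2)$ by the second half of \eqref{eq:error}. The quantity $\norm{L^2(\dom)}{\isch \lproj{\Fh}{\k} u_g - u_g}$ is $O(\h^2)$ by exactly the argument closing the proof of Lemma~\ref{lem:approximation-verification}, namely \eqref{eq:bubbles} applied to the bubble $\ul{\mathcal{J}}_\ell u_g - \ulisch \lproj{\Fh}{\k} u_g$ together with \eqi{eq:approximation-bulk}{1}.

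For \eqref{eq:two-level-projection}, set $e \doteq \GH g - \Ph \Gh g$ and again use \eqi{eq:l2-representative}{2} on level $\ell-1$. For the energy part, take any $\mu \in \UHb$. By \eqref{eq:discrete-poisson-skeleton} on both levels and \eqref{eq:coarse-projection},
\[
  \aH(e,\mu) = (g, \iscH \mu)_{\TH} - \ah(\Gh g, \IHh \mu) = (g, \iscH\mu - \isch \IHh \mu)_{\dom}.
\]
This is $\lesssim \H \norm{L^2(\dom)}{g} \norm{\aH}{\mu}$ by \eqi{eq:ass-prolongation-first}{2}, so $\H \norm{\aH}{e} \lesssim \H^2 \norm{L^2(\dom)}{g}$.

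The main step is the bulk part $\psi \doteq \iscH e$, which I treat by discrete duality rather than continuous duality. By \eqref{eq:discrete-poisson-skeleton} at level $\ell-1$ with datum $\psi$, $\norm{L^2(\dom)}{\psi}^2 = (\psi, \iscH e) = \aH(\GH\psi, e)$. The identity just derived, with $\mu = \GH\psi$, then gives
\[
  \norm{L^2(\dom)}{\psi}^2 = (g, \iscH \GH\psi - \isch \IHh \GH \psi)_{\dom}.
\]
I then split the right factor as
\[
  (\iscH\GH\psi - u_\psi) + (u_\psi - \isch \Gh \psi) + \isch(\Gh\psi - \IHh \GH\psi).
\]
The first two pieces are $O(\H^2 \norm{L^2(\dom)}{\psi})$ by Theorem~\ref{th:error} on the two levels. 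The third is bounded via \eqi{eq:l2-representative}{1} by $\norm{\ell}{\Gh\psi - \IHh\GH\psi}$, which is $O(\H^2\norm{L^2(\dom)}{\psi})$ by \eqref{eq:two-level-l2} applied with datum $\psi$. Dividing by $\norm{L^2(\dom)}{\psi}$ gives $\norm{L^2(\dom)}{\psi} \lesssim \H^2 \norm{L^2(\dom)}{g}$, which concludes. The obstacle I anticipate is confirming that the pairing $(g, \iscH\mu - \isch\IHh\mu)$ appears with no leftover cell-lifting terms $\dualisch g$. This holds because the condensed right-hand side in \eqref{eq:discrete-poisson-skeleton} pairs $g$ only with $\isch\mu$, so no $\dualisch g$ term arises.
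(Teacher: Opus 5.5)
Your proposal is correct and follows the paper's proof essentially step for step. For \eqref{eq:two-level-l2} you use the same splitting through $\lproj{\Fh}{\k} u_g$ and $\IHh \lproj{\FH}{\K} u_g$, resting on the same single-level skeletal error bound. For \eqref{eq:two-level-projection} you use the same identity $\aH(e,\mu) = (g, \iscH\mu - \isch\IHh\mu)_{\dom}$, the same energy bound with $\mu = e$, and the same discrete duality with $\mu = \GH(\iscH e)$, which reduces the bulk term to \eqref{eq:two-level-l2}. The only cosmetic difference is that you bound the face part of $\norm{\ell-1}{e}$ through \eqi{eq:l2-representative}{2} rather than directly through \eqi{eq:face-bounds}{1}, and these are the same estimate.
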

\begin{proof}
  Let $u_g$ solve \eqref{eq:weak-poisson} with datum $g$.

  \emph{Step 1: \eqref{eq:two-level-l2}.} We first show that, at either level,
  \begin{equation} \label{eq:skeletal-error}
    \norm{\ell}{\Gh g - \lproj{\Fh}{\k} u_g} \lesssim \h^2 \norm{L^2(\dom)}{g} .
  \end{equation}
  By \eqi{eq:l2-representative}{2} it suffices to bound the bulk part at order $\h^2$ and the energy at order $\h$; the latter is \eqref{eq:error}. For the former, inserting $\pm u_g$ and $\pm\lproj{\Th}{\k+1} u_g$,
  \[
    \norm{\Th}{\isch \Gh g - \isch \lproj{\Fh}{\k} u_g}
    \leq \norm{\Th}{\isch \Gh g - u_g} + \norm{\Th}{u_g - \lproj{\Th}{\k+1} u_g}
    + \norm{\Th}{\lproj{\Th}{\k+1} u_g - \isch \lproj{\Fh}{\k} u_g},
  \]
  bounded by \eqref{eq:error}, by \eqi{eq:approximation-bulk}{1}, and by \eqref{eq:bubbles}, the last two with \Aref{ass:regularity} and the last term being the bulk component of the bubble $\ul{\mathcal{J}}_\ell u_g - \ulisch \lproj{\Fh}{\k} u_g$. This gives \eqref{eq:skeletal-error}.

  Inserting $\pm\lproj{\Fh}{\k} u_g$ and $\pm\IHh \lproj{\FH}{\K} u_g$,
  \[
    \norm{\ell}{\Gh g - \IHh \GH g}
    \leq \norm{\ell}{\Gh g - \lproj{\Fh}{\k} u_g}
    + \norm{\ell}{\lproj{\Fh}{\k} u_g - \IHh \lproj{\FH}{\K} u_g}
    + \norm{\ell}{\IHh(\lproj{\FH}{\K} u_g - \GH g)},
  \]
  and the three terms are bounded by \eqref{eq:skeletal-error}, by \eqref{eq:ass-prolongation-approx} with \Aref{ass:regularity}, and by \eqi{eq:ass-prolongation-stab}{2} with \eqref{eq:skeletal-error} at level $\ell-1$. Conclude with \eqi{eq:geometry-b}{3}.

  \emph{Step 2: \eqref{eq:two-level-projection}.} Let $e \doteq \GH g - \Ph \Gh g \in \UHb$. Testing the coarse problem with $\mu$, then \eqref{eq:coarse-projection} and the fine problem with $\IHh \mu$,
  \begin{equation} \label{eq:e-identity}
    \aH(e, \mu) = (g, \iscH \mu)_{\TH} - (g, \isch \IHh \mu)_{\Th} = (g, \iscH \mu - \isch \IHh \mu)_{\dom}
    \qquad \forall \mu \in \UHb .
  \end{equation}
  Taking $\mu = e$,
  \[
    \norm{\aH}{e}^2 \leq \norm{L^2(\dom)}{g} \norm{L^2(\dom)}{\iscH e - \isch \IHh e}
    \just{\lesssim}{\eqi{eq:ass-prolongation-first}{2}} \H \norm{L^2(\dom)}{g} \norm{\aH}{e}.
  \]
  Dividing by $\norm{\aH}{e}$ gives $\norm{\aH}{e} \lesssim \H \norm{L^2(\dom)}{g}$. 
  The face term of $\norm{\ell-1}{e}$ is bounded by the energy as follows:
  \[
    \Big( \sum_{T \in \TH} h_T \norm{\partial T}{r_T(\ulisc{\ell-1} e)}^2 \Big)^{1/2}
    \just{\leq}{\eqi{eq:face-bounds}{1}} \H \norm{\aH}{e} \lesssim \H^2 \norm{L^2(\dom)}{g} .
  \]
  The cell term instead needs a duality argument, together with a second-order sharpening of \eqi{eq:ass-prolongation-first}{2} available when the coarse datum is a discrete solution: for $w \in L^2(\dom)$, inserting $\pm u_w$ and $\pm\isch \Gh w$,
  \begin{equation} \label{eq:two-level-bulk}
  \begin{aligned}
    \norm{L^2(\dom)}{\iscH \GH w - \isch \IHh \GH w}
    &\just{\leq}{\eqi{eq:l2-representative}{1}} \norm{L^2(\dom)}{\iscH \GH w - u_w} + \norm{L^2(\dom)}{u_w - \isch \Gh w} + \norm{\ell}{\Gh w - \IHh \GH w} \\
    &\just{\lesssim}{\eqref{eq:error},\,\eqref{eq:two-level-l2}} \H^2 \norm{L^2(\dom)}{w} ,
  \end{aligned}
  \end{equation}
  Assume $\iscH e \neq 0$ and take $\mu = \GH(\iscH e)$, so that $\aH(e,\mu) = \norm{L^2(\dom)}{\iscH e}^2$ by \eqref{eq:discrete-poisson-skeleton} at level $\ell-1$ tested with $e$, and the symmetry of $\aH$. Then \eqref{eq:e-identity} and \eqref{eq:two-level-bulk} with datum $\iscH e$ give
  \[
    \norm{L^2(\dom)}{\iscH e}^2
    \leq \norm{L^2(\dom)}{g} \, \norm{L^2(\dom)}{\iscH \GH (\iscH e) - \isch \IHh \GH (\iscH e)}
    \lesssim \H^2 \norm{L^2(\dom)}{g} \norm{L^2(\dom)}{\iscH e} .
  \]
  Dividing and adding the two terms gives \eqref{eq:two-level-projection}.
\end{proof}

\subsection{The approximation property}\label{sec:approximation-property}

\begin{theorem}[Approximation property]\label{th:approximation}
  Let Assumptions~\ref{ass:regularity} and \ref{ass:prolongation} hold. Then, for all $\lambda \in \Uhb$,
  \begin{equation} \label{eq:master-estimate}
    \norm{\ell}{\lambda - \IHh \Ph \lambda} \lesssim \H^2 \norm{\ell}{\Ah \lambda},
  \end{equation}
  and \Aref{ass:regularity-approx} holds.
\end{theorem}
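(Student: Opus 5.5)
The plan is to split $\lambda$ into a discrete solution plus a remainder, using the residual representative of Corollary~\ref{cor:residual-representative}. Set $f \doteq f_\lambda = \isch(\Ah\lambda)$, so that $\norm{L^2(\dom)}{f} \leq \norm{\ell}{\Ah\lambda}$, and write $\lambda = \Gh f + \eta$ with $\eta \doteq \lambda - \Gh f$. By \eqref{eq:residual-representative}, $\norm{\ah}{\eta} \leq \h \norm{\ell}{\Ah\lambda}$. Since $\identity - \IHh\Ph$ is linear, I would bound its action on the two pieces separately.

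For the remainder, Lemma~\ref{lem:levelnorm-OH} gives
\[
  \norm{\ell}{\eta - \IHh\Ph\eta} \lesssim \H\norm{\ah}{\eta} \lesssim \H\h\norm{\ell}{\Ah\lambda},
\]
which is $\lesssim \H^2\norm{\ell}{\Ah\lambda}$ by \eqi{eq:geometry-b}{3}. This is why the first-order bound is needed for arbitrary $\lambda$: the face part of the residual only produces an $O(\h)$ energy defect, and the first-order contraction supplies the second factor.

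For the discrete solution, I would insert $\pm\IHh\GH f$:
\[
  \Gh f - \IHh\Ph\Gh f = (\Gh f - \IHh\GH f) + \IHh(\GH f - \Ph\Gh f).
\]
The first term is $\lesssim \H^2\norm{L^2(\dom)}{f}$ by \eqref{eq:two-level-l2}. For the second, \eqi{eq:ass-prolongation-stab}{2} reduces it to $\norm{\ell-1}{\GH f - \Ph\Gh f}$, which is $\lesssim \H^2\norm{L^2(\dom)}{f}$ by \eqref{eq:two-level-projection}. Together with $\norm{L^2(\dom)}{f} \leq \norm{\ell}{\Ah\lambda}$, these give \eqref{eq:master-estimate}.

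To deduce \Aref{ass:regularity-approx}, I would use $\ah(\varepsilon,\lambda) = \braket{\varepsilon,\Ah\lambda}_\ell$ with $\varepsilon \doteq \lambda - \IHh\Ph\lambda$, which follows from \eqref{eq:level-operator} and symmetry. Cauchy--Schwarz in $\braket{\cdot,\cdot}_\ell$ and \eqref{eq:master-estimate} then give
\[
  \big|\ah(\varepsilon,\lambda)\big| \leq \norm{\ell}{\varepsilon}\norm{\ell}{\Ah\lambda} \lesssim \H^2\norm{\ell}{\Ah\lambda}^2,
\]
and $\H \simeq \h$ yields \eqref{eq:ass-regularity}. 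All hypotheses of the invoked lemmas are contained in Assumptions~\ref{ass:regularity} and \ref{ass:prolongation}, so the argument is essentially bookkeeping.

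The only step that needs care is the remainder. One must not try to treat $\eta$ by a second-order estimate, since none holds for arbitrary data; it is the pairing of the $O(\h)$ energy bound from Corollary~\ref{cor:residual-representative} with the $O(\H)$ bound of Lemma~\ref{lem:levelnorm-OH} that closes the estimate. I would also check that Lemma~\ref{lem:levelnorm-OH} is stated with the energy norm on the right, which it is.
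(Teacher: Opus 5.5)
Your proposal is correct and matches the paper's proof essentially step for step. You use the same residual representative $f_\lambda = \isch(\Ah\lambda)$ and the same three-term splitting: your $\eta - \IHh\Ph\eta$, $\Gh f - \IHh\GH f$ and $\IHh(\GH f - \Ph\Gh f)$ are the paper's $\mathfrak{T}_1$, $\mathfrak{T}_2$, $\mathfrak{T}_3$, bounded by the same lemmas, and Assumption~\ref{ass:regularity-approx} follows by the same symmetry and Cauchy--Schwarz step.
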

\begin{proof}
  Let $f_\lambda \doteq \isch(\Ah \lambda)$ be the residual representative of Corollary~\ref{cor:residual-representative}. Since $\IHh$ and $\Ph$ are linear,
  \[
    \lambda - \IHh \Ph \lambda
    = \underbrace{(\lambda - \Gh f_\lambda) - \IHh \Ph (\lambda - \Gh f_\lambda)}_{\mathfrak{T}_1}
    + \underbrace{\Gh f_\lambda - \IHh \GH f_\lambda}_{\mathfrak{T}_2}
    + \underbrace{\IHh (\GH f_\lambda - \Ph \Gh f_\lambda)}_{\mathfrak{T}_3} .
  \]
  Lemma~\ref{lem:levelnorm-OH} applied to $\lambda - \Gh f_\lambda$ gives
  \[
    \norm{\ell}{\mathfrak{T}_1} \lesssim \H \norm{\ah}{\lambda - \Gh f_\lambda}
    \just{\leq}{\eqi{eq:residual-representative}{2}} \H \h \norm{\ell}{\Ah \lambda}
    \just{\lesssim}{} \H^2 \norm{\ell}{\Ah \lambda} .
  \]
  For $\mathfrak{T}_2$,
  \[
    \norm{\ell}{\mathfrak{T}_2} \just{\lesssim}{\eqref{eq:two-level-l2}} \H^2 \norm{L^2(\dom)}{f_\lambda}
    \just{\leq}{\eqi{eq:residual-representative}{1}} \H^2 \norm{\ell}{\Ah \lambda} .
  \]
  Finally $\norm{\ell}{\mathfrak{T}_3} \lesssim \norm{\ell-1}{\GH f_\lambda - \Ph \Gh f_\lambda} \lesssim \H^2 \norm{\ell}{\Ah \lambda}$ by \eqi{eq:ass-prolongation-stab}{2}, \eqref{eq:two-level-projection} and \eqi{eq:residual-representative}{1}, which proves \eqref{eq:master-estimate}. \Aref{ass:regularity-approx} follows from the symmetry of $\ah$ and \eqref{eq:level-operator},
  \[
    \big| \ah(\lambda - \IHh \Ph \lambda, \lambda) \big|
    = \big| \braket{\Ah \lambda, \lambda - \IHh \Ph \lambda}_\ell \big|
    \leq \norm{\ell}{\Ah \lambda} \norm{\ell}{\lambda - \IHh \Ph \lambda}
    \just{\lesssim}{\eqref{eq:master-estimate}} \h^2 \norm{\ell}{\Ah \lambda}^2 . \qedhere
  \]
\end{proof}

\subsection{Convergence of the V-cycle}\label{sec:convergence}

\begin{theorem}[Convergence]\label{th:convergence}
  Let Assumptions~\ref{ass:regularity}--\ref{ass:patches} hold, let $\IHh \in \{\IUh, \IRh\}$ and let $\Sh$ be the patch smoother \eqref{def:patch-smoother} with $\omega < 2/\NO$. Then there are constants $C_R$ and $C_S$, independent of $\ell$ and of $L$, such that
  \begin{enumerate}
    \item if $m_\ell = m > 2 C_R C_S$ on every level, the V-cycle \eqref{eq:vcycle} contracts as in \eqref{eq:vcycle-contraction}, with $\delta = C_R C_S / (m - C_R C_S) < 1$;
    \item if $m_\ell$ obeys \eqref{eq:smoothing-steps-condition}, the variable V-cycle satisfies \eqref{eq:variable-vcycle}, so that $\kappa(\Bh \Ah) \leq \eta_1 / \eta_0$ is bounded.
  \end{enumerate}
\end{theorem}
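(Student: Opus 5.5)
The proof is an assembly step: the two abstract theorems of \sect{sec:framework} only require Assumptions~\ref{ass:smoothing}--\ref{ass:regularity-approx} with level-independent constants. Everything below therefore goes toward verifying those three hypotheses for the chosen prolongation and smoother, and then tracking that no constant depends on $\ell$ or $L$.

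\emph{Prolongation.} First, I verify Assumption~\ref{ass:prolongation} for $\IHh \in \{\IUh, \IRh\}$:
\begin{itemize}
\item Theorem~\ref{th:prolongation-verification} gives \eqi{eq:ass-prolongation-stab}{1} and \eqref{eq:ass-prolongation-first}.
\item Lemma~\ref{lem:levelnorm-stability} gives \eqi{eq:ass-prolongation-stab}{2}.
\item Lemma~\ref{lem:approximation-verification} gives \eqref{eq:ass-prolongation-approx}.
\end{itemize}
All hidden constants depend only on $\varrho$, $\k$ and the weights $\alpha_{FT}\in(0,1)$, so $C_I$ and the others are uniform. With this and Assumption~\ref{ass:regularity}, Lemma~\ref{lem:bounded-correction} yields Assumption~\ref{ass:bounded} with $C_B=(1+C_I^4)^{1/2}$. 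Theorem~\ref{th:approximation} yields Assumption~\ref{ass:regularity-approx} with some $C_R$, which is uniform because it collects constants from Theorem~\ref{th:error}, Lemma~\ref{lem:levelnorm-OH} and Lemma~\ref{lem:two-level}, each uniform by \eqi{eq:geometry-b}{3}.

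\emph{Smoother.} Under Assumption~\ref{ass:patches}, I take the decomposition weights $\alpha_{r,F}=1/\#\{r: F\in\mathcal{F}_r\}$. Lemma~\ref{lem:stable-decomposition} then gives \eqref{eq:ass-smoother} with a uniform $c_1$. For this I note that $\norm{A_r}{\lambda_r}=\norm{\ah}{\lambda_r}$, since $A_r$ is the restriction of $\ah$ to $N_r$. Lemma~\ref{lem:interaction} gives $c_0=\NO$. With $\omega<2/\NO$, Theorem~\ref{th:smoother-cond-number} yields Assumption~\ref{ass:smoothing} with $C_S=c_1/(\omega(2-\omega\NO))$, again independent of $\ell$.

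\emph{Conclusion.} Item (1) then follows from Theorem~\ref{th:vcycle}, and item (2) from Theorem~\ref{th:variable-vcycle}. I justify replacing $1/\eigenmax$ by $\h^2$ through Remark~\ref{rem:lambda-normalisation} together with Lemma~\ref{lem:spectral-bound}. The bound $\kappa(\Bh\Ah)\le\eta_1/\eta_0$ follows because $\Bh\Ah$ is $\ah$-self-adjoint, by construction \eqref{eq:vcycle-error} with $\Sh'=\Sh$, and \eqref{eq:variable-vcycle} brackets its spectrum in $[\eta_0,\eta_1]$.

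The main obstacle I expect is uniformity across levels. The recursive arguments of \cite{Duan2007AGB,Bramble1991} apply the hypotheses at every level $1\le j\le\ell$ with the same constants. I must therefore check that each cited estimate holds at level $j$ with constants depending only on $\varrho$ and $\k$. In particular, \eqref{eq:smoothing-steps-condition} bounds $\eta_0,\eta_1$ through a geometric sum in $\rho_1$, which is where independence of $L$ enters. I also need the coarsest solve $\Bl{0}=\Al{0}^{-1}$ to be exact. I would also confirm that $C_R$ and $C_S$ as produced above are exactly the constants appearing in the threshold $m>2C_RC_S$.
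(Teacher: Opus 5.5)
Your proposal is correct and matches the paper's proof, which assembles the result the same way. It obtains Assumption~\ref{ass:prolongation} from Theorem~\ref{th:prolongation-verification} and Lemmas~\ref{lem:levelnorm-stability} and \ref{lem:approximation-verification}, and Assumption~\ref{ass:smoother} from Lemmas~\ref{lem:stable-decomposition} and \ref{lem:interaction}. The three framework hypotheses then come from Theorem~\ref{th:smoother-cond-number}, Lemma~\ref{lem:bounded-correction} and Theorem~\ref{th:approximation}, and Theorems~\ref{th:vcycle} and \ref{th:variable-vcycle} give the two claims.

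Two small points. Lemma~\ref{lem:bounded-correction} needs only \eqi{eq:ass-prolongation-stab}{1}, not Assumption~\ref{ass:regularity}. The $\h^2$ normalisation of Remark~\ref{rem:lambda-normalisation} does not require Lemma~\ref{lem:spectral-bound}.
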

\begin{proof}
  \Aref{ass:prolongation} holds for both prolongations by Theorem~\ref{th:prolongation-verification}, Lemma~\ref{lem:levelnorm-stability} and  Lemma~\ref{lem:approximation-verification}, and \Aref{ass:smoother} for the patch families by Lemma~\ref{lem:stable-decomposition} and Lemma~\ref{lem:interaction}. \Aref{ass:smoothing} then follows from Theorem~\ref{th:smoother-cond-number}, \Aref{ass:bounded} from Lemma~\ref{lem:bounded-correction} and \Aref{ass:regularity-approx} from Theorem~\ref{th:approximation}. Theorems~\ref{th:vcycle} and \ref{th:variable-vcycle} give the two claims.
\end{proof}

For the spaces \eqref{eq:interface-space} of \sect{sec:spaces}, \Aref{ass:face-spaces} holds by Lemma~\ref{lem:design-property}. Both bounds are uniform in the mesh size and in the number of levels. \sect{sec:results} measures them.

\section{Numerical results}
\label{sec:results}

We now test the solver of \sect{sec:method}, verifying that the iteration counts are bounded independently of the hierarchy depth $L$ and of the fine mesh size, for both prolongations and all patch smoothers.
\begin{figure}[ht]
  \centering
  \includegraphics[width=0.3\textwidth]{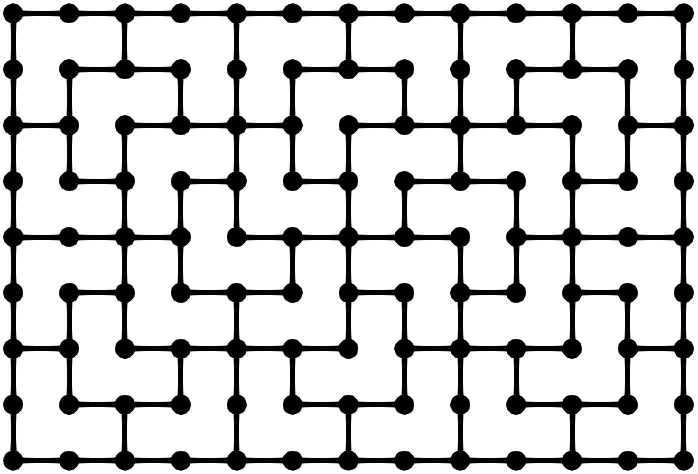} \hspace{2em}
  \includegraphics[width=0.3\textwidth]{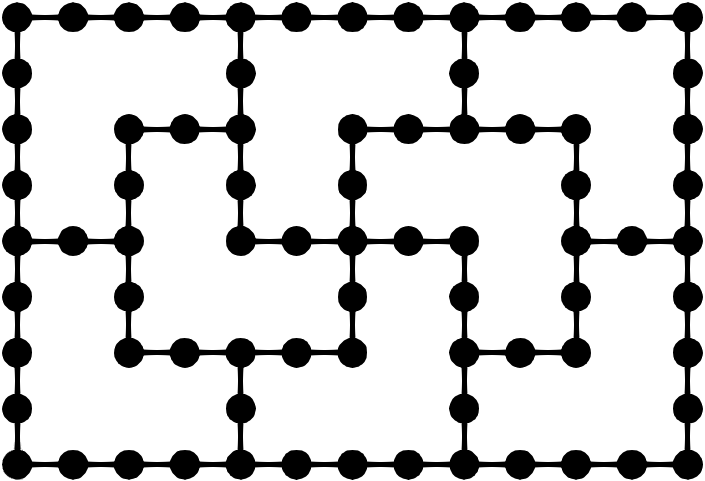}\\
  \vspace{2em}
  \includegraphics[width=0.25\textwidth]{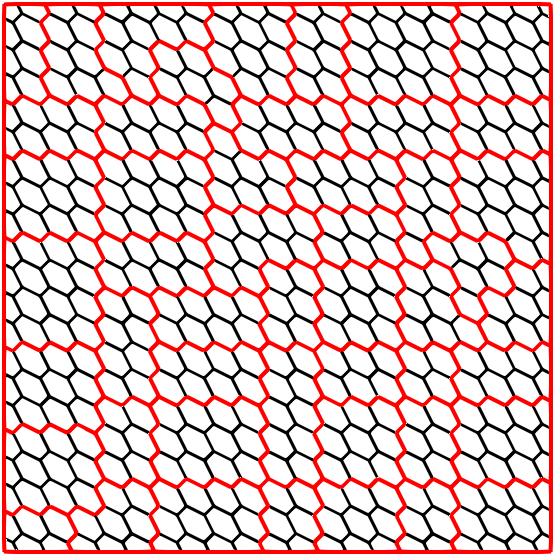}
  \includegraphics[width=0.25\textwidth]{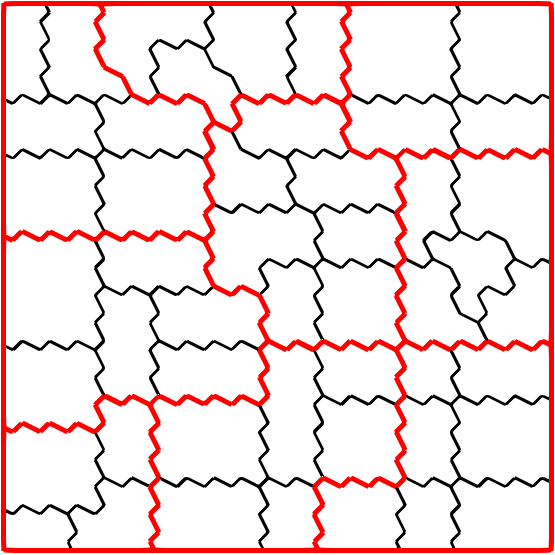}
  \includegraphics[width=0.25\textwidth]{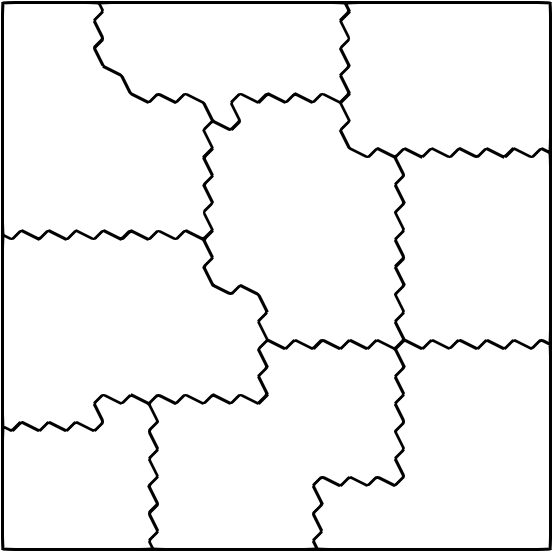}
  \caption{Illustration of the considered families of meshes in 2D: Rep-tile meshes (top row), and agglomerated Voronoi meshes (bottom row). Each row shows consecutive levels in the mesh hierarchy, with finer meshes on the left and coarser meshes on the right. Note that interfaces between coarse cells retain all vertices from the agglomeration process.}
  \label{fig:meshes}
\end{figure}

In two dimensions, we consider three different families of mesh hierarchies:

\textbf{Cartesian meshes}: Given the unit $d$-cube $\left[0,1\right]^d$ and an initial Cartesian mesh with $n$ cells in each direction, where $n$ is even, we generate a coarser mesh by agglomerating blocks of $2 \times 2$ quadrilateral elements into identical octagonal elements. The resulting mesh has $n/2$ octagonal elements in each direction, arranged in a Cartesian fashion. Note that this process differs from the one described in \cite[Section 3.1]{DiPietro2021} as we do not coarsen the octagon faces into quadrilaterals, but rather keep the extra vertices. This process can be repeated until the desired number of levels is reached, where in each level $\ell$ the elements can be seen as squares whose edges have been subdivided into $2^{(\ell-1)}$ segments. 

\textbf{Rep-tile meshes}: Next, we consider meshes composed of rep-tiles, i.e., polygons that can be subdivided into smaller copies of themselves. This allows us to test our algorithm with meshes having non-straight interfaces while keeping a regular coarsening strategy. We consider a rectangular domain $\left[0,3/2\right]\times\left[0,1\right]$ tiled procedurally with L-shaped polygons. The coarsening is performed by grouping four identical L-shaped elements into a larger L-shaped element, as illustrated in Figure \ref{fig:meshes}. This process can be repeated to generate multiple levels of coarsening. As with the above Cartesian meshes, straight interfaces are not coarsened.

\textbf{Agglomerated Voronoi meshes}: Finally, we consider mesh hierarchies obtained by arbitrary agglomeration of polygonal meshes. Starting from a Cartesian mesh of the unit $d$-cube $\left[0,1\right]^d$, we subdivide each quadrilateral into two identical triangles. The polygonal mesh is then generated by computing the Voronoi tessellation of the simplexified mesh, associating each Voronoi cell with a Cartesian node. The coarsening is performed by agglomerating connected clusters of cells into arbitrary polygonal elements. Note that we do not require that the resulting elements are convex or shape-regular; these hierarchies thus deliberately exceed the mesh assumptions of the theory (Assumption \ref{ass:geometry}), serving as a robustness test. Finding optimal agglomeration strategies for polytopal meshes remains an open research question \cite{FEDER2025,ANTONIETTI2026}. Within this work, the colouring of the mesh into clusters is performed using a hierarchical clustering algorithm with complete-linkage clustering and optimal branch ordering \cite{barjoseph2001}, provided by the Julia package \texttt{Clustering.jl} \cite{Clustering.jl}. Several other clustering strategies, including K-means, R-trees and graph-based methods (using Metis) were also considered but produced worse agglomerates when compared visually. The coarsening process is illustrated in Figure \ref{fig:meshes}.

In three dimensions, we limit our study to Cartesian meshes of the unit $d$-cube $\left[0,1\right]^d$, coarsened in a similar way as in two dimensions by agglomerating blocks of $2 \times 2 \times 2$ polyhedral elements. 

On each mesh family, we solve the Poisson problem \eqref{eq:poisson} with unit diffusion and source term
$$
f(x,y) = -\Delta(\sin(2 \pi x)  \sin(2 \pi y) x (x-L_x) y (y-L_y)),
$$
where $(L_x, L_y) = (1, 1)$ for unit square domains and $(L_x, L_y) = (3/2, 1)$ for rep-tile domains.

The resulting linear systems are solved using FGMRES \cite{saad1993} preconditioned with a single iteration of the Multigrid V-cycle from \sect{sec:method}. Each V-cycle uses $m_\ell = 5$ pre- and post-smoothing steps with a constant damping parameter $\omega = 0.2$, and a direct LU solve at the coarsest level. The damping complies with the condition $\omega < 2/\NO$ of Theorem \ref{th:smoother-cond-number} for at most five colours; the threshold on $m_\ell$ in Theorem \ref{th:convergence} involves constants that we do not track explicitly, and the level- and mesh-robustness observed below indicates it is met in practice. All experiments are implemented in Julia \cite{Julia} within the open source Gridap ecosystem \cite{Gridap,Verdugo2022}, making use of the recent library expansion for hybrid discretisations \cite{GridapHybrid} and the Julia-native solvers provided by GridapSolvers \cite{GridapSolvers}; the features of \cite{GridapHybrid} are available from Gridap \texttt{v0.19.8}.

We test two prolongation operators, $\IRh$ and $\IUh$ from Definition \ref{def:reconstruction-prolongation}, and two patch smoothers: face-star patch (FP) and vertex-star patch (VP). In 3D, we also test the edge-star patch (EP) smoother. For each family of meshes, we create hierarchies with $L = 2, \ldots, 6$ levels for different finest mesh sizes $n_c$ (number of cells in each direction for the finest mesh). We use the \ac{hho} spaces from \sect{sec:setting} with $k \in \{0, 1, 2\}$. Table \ref{tab:ndofs} reports the number of skeletal DOFs (interface unknowns $\Uhb$) after static condensation eliminates cell unknowns for different types of meshes in 2D and 3D.

For each combination of parameters, we report the number of FGMRES iterations required to reduce the relative residual by a factor of $10^{-8}$ for different depths $L$ and finest mesh sizes $n_c$. The results are presented in Table \ref{tab:niter}. We do not run the case $L=6$ for the smallest agglomeration-based mesh in 2D for lack of cells to coarsen. We also do not show the vertex-star patch case for $k = 2$ and the largest mesh in 3D, which does not converge for the current choice of damping parameter $\omega$ (increased overlapping of the vertex-patch and hidden $k$-dependence of the constants require more damping to achieve convergence).
For fixed polynomial order $k$ and finest mesh size $n_c$, the iteration counts remain stable for all $L = 2, \ldots, 6$. Similarly for fixed $k$ and $L$, as $n_c$ increases the iteration counts remain constant, validating $h$-independence of the preconditioner for both patches and prolongation operators. Although not covered in the theory, the preconditioner also appears to be $p$-robust. The vertex-star patch smoother consistently outperforms the face-star patch, while being more computationally expensive. Both prolongation operators $\IRh$ and $\IUh$ exhibit similar performance. These results confirm Theorem~\ref{th:convergence}.

\begin{table}[p]
  \centering
  \scriptsize
  \setlength{\tabcolsep}{3pt}
  \renewcommand{\arraystretch}{1.05}

  \caption{Number of skeletal DOFs (interface unknowns $\Uhb$) after static condensation.}
  \label{tab:ndofs}

  \begin{subtable}[t]{0.49\textwidth}
    \centering
    \begin{tabular}{ll!{\color{lightgray}\vrule width 0.4pt}cccccc}
      \toprule
      \multirow{2}{*}{$n_c$} & \multirow{2}{*}{$k$} & \multicolumn{6}{c}{$\ell$} \\
      \cmidrule{3-8}
      & & 1 & 2 & 3 & 4 & 5 & 6 \\
      \midrule
      \multirow{3}{*}{4096} & 0 & 8064 & 1984 & 480 & 112 & 24 & 4 \\
      & 1 & 16128 & 3968 & 960 & 224 & 48 & 8 \\
      & 2 & 24192 & 5952 & 1440 & 336 & 72 & 12 \\
      \midrule
      \multirow{3}{*}{16384} & 0 & 32512 & 8064 & 1984 & 480 & 112 & 24 \\
      & 1 & 65024 & 16128 & 3968 & 960 & 224 & 48 \\
      & 2 & 97536 & 24192 & 5952 & 1440 & 336 & 72 \\
      \midrule
      \multirow{3}{*}{65536} & 0 & 130560 & 32512 & 8064 & 1984 & 480 & 112 \\
      & 1 & 261120 & 65024 & 16128 & 3968 & 960 & 224 \\
      & 2 & 391680 & 97536 & 24192 & 5952 & 1440 & 336 \\
      \bottomrule
    \end{tabular}
    \subcaption{2D Cartesian meshes.}
    \label{tab:ndofs_d_2_ahho}
  \end{subtable}\hfill
  \begin{subtable}[t]{0.49\textwidth}
    \centering
    \begin{tabular}{ll!{\color{lightgray}\vrule width 0.4pt}cccccc}
      \toprule
      \multirow{2}{*}{$n_c$} & \multirow{2}{*}{$k$} & \multicolumn{6}{c}{$\ell$} \\
      \cmidrule{3-8}
      & & 1 & 2 & 3 & 4 & 5 & 6 \\
      \midrule
      \multirow{3}{*}{32768} & 0 & 130432 & 28512 & 7088 & 1752 & 428 & 102 \\
      & 1 & 260864 & 57024 & 14176 & 3504 & 856 & 204 \\
      & 2 & 391296 & 85536 & 21264 & 5256 & 1284 & 306 \\
      \midrule
      \multirow{3}{*}{131072} & 0 & 523008 & 114368 & 28512 & 7088 & 1752 & 428 \\
      & 1 & 1046016 & 228736 & 57024 & 14176 & 3504 & 856 \\
      & 2 & 1569024 & 343104 & 85536 & 21264 & 5256 & 1284 \\
      \midrule
      \multirow{3}{*}{524288} & 0 & 2094592 & 458112 & 114368 & 28512 & 7088 & 1752 \\
      & 1 & 4189184 & 916224 & 228736 & 57024 & 14176 & 3504 \\
      & 2 & 6283776 & 1374336 & 343104 & 85536 & 21264 & 5256 \\
      \bottomrule
    \end{tabular}
    \subcaption{2D Rep-tile meshes.}
    \label{tab:ndofs_d_2_polyahho}
  \end{subtable}

  \vspace{0.75em}

  \begin{subtable}[t]{0.49\textwidth}
    \centering
    \begin{tabular}{ll!{\color{lightgray}\vrule width 0.4pt}cccccc}
      \toprule
      \multirow{2}{*}{$n_c$} & \multirow{2}{*}{$k$} & \multicolumn{6}{c}{$\ell$} \\
      \cmidrule{3-8}
      & & 1 & 2 & 3 & 4 & 5 & 6 \\
      \midrule
      \multirow{3}{*}{10201} & 0 & 30200 & 6688 & 884 & 112 & 10 & - \\
      & 1 & 60400 & 17805 & 2601 & 335 & 30 & - \\
      & 2 & 90600 & 30625 & 5104 & 668 & 60 & - \\
      \midrule
      \multirow{3}{*}{19881} & 0 & 59080 & 12957 & 1717 & 243 & 32 & 10 \\
      & 1 & 118160 & 34623 & 5041 & 724 & 96 & 30 \\
      & 2 & 177240 & 59827 & 9899 & 1443 & 192 & 60 \\
      \midrule
      \multirow{3}{*}{40401} & 0 & 120400 & 26067 & 3461 & 419 & 66 & 10 \\
      & 1 & 240800 & 69607 & 10251 & 1256 & 198 & 30 \\
      & 2 & 361200 & 120679 & 20221 & 2509 & 396 & 60 \\
      \bottomrule
    \end{tabular}
    \subcaption{2D agglomerated Voronoi meshes.}
    \label{tab:ndofs_d_2_aggmesh}
  \end{subtable}\hfill
  \begin{subtable}[t]{0.49\textwidth}
    \centering
    \begin{tabular}{ll!{\color{lightgray}\vrule width 0.4pt}cccc}
      \toprule
      \multirow{2}{*}{$n_c$} & \multirow{2}{*}{$k$} & \multicolumn{4}{c}{$\ell$} \\
      \cmidrule{3-6}
      & & 1 & 2 & 3 & 4 \\
      \midrule
      \multirow{3}{*}{4096} & 0 & 11520 & 1344 & 144 & 12 \\
      & 1 & 34560 & 4032 & 432 & 36 \\
      & 2 & 69120 & 8064 & 864 & 72 \\
      \midrule
      \multirow{3}{*}{32768} & 0 & 95232 & 11520 & 1344 & 144 \\
      & 1 & 285696 & 34560 & 4032 & 432 \\
      & 2 & 571392 & 69120 & 8064 & 864 \\
      \midrule
      \multirow{3}{*}{262144} & 0 & 774144 & 95232 & 11520 & 1344 \\
      & 1 & 2322432 & 285696 & 34560 & 4032 \\
      & 2 & 4644864 & 571392 & 69120 & 8064 \\
      \bottomrule
    \end{tabular}
    \subcaption{3D Cartesian meshes.}
    \label{tab:ndofs_d_3_ahho}
  \end{subtable}

\end{table}

\newlength{\nitermax}
\newsavebox{\niterA}\newsavebox{\niterB}\newsavebox{\niterC}\newsavebox{\niterD}

\newcommand{\nitersetw}[4]{%
  \setlength{\tabcolsep}{\dimexpr 2pt + (#3 - \wd#1)/#2\relax}%
  \sbox#1{\input{#4}}%
}

\begin{table}[htbp]
  \centering
  \scriptsize
  \setlength{\tabcolsep}{2pt}
  \sbox\niterA{\begin{tabular}{cc!{\color{lightgray}\vrule width 0.4pt}ccccc!{\color{lightgray}\vrule width 0.4pt}ccccc!{\color{lightgray}\vrule width 0.4pt}ccccc!{\color{lightgray}\vrule width 0.4pt}ccccc}
  \toprule
  \multirow{3}{*}{$n_c$} & \multirow{3}{*}{$k$} &
  \multicolumn{10}{c}{FP} &
  \multicolumn{10}{c}{VP} \\
  \cmidrule(lr){3-12}\cmidrule(lr){13-22}
  & &
  \multicolumn{5}{c}{$I^{R}_{\ell-1}$} & \multicolumn{5}{c}{$I^{U}_{\ell-1}$} &
  \multicolumn{5}{c}{$I^{R}_{\ell-1}$} & \multicolumn{5}{c}{$I^{U}_{\ell-1}$} \\
  \cmidrule(lr){3-7}\cmidrule(lr){8-12}\cmidrule(lr){13-17}\cmidrule(lr){18-22}
  & &
  2 & 3 & 4 & 5 & 6 & 2 & 3 & 4 & 5 & 6 &
  2 & 3 & 4 & 5 & 6 & 2 & 3 & 4 & 5 & 6 \\
  \midrule
  4096  & 0 & 13 & 14 & 14 & 13 & 12 & 13 & 14 & 14 & 13 & 12 & 9 & 9 & 9 & 9 & 9 & 9 & 9 & 9 & 9 & 9 \\
        & 1 & 9 & 10 & 11 & 12 & 12 & 11 & 12 & 13 & 13 & 12 & 6 & 6 & 6 & 6 & 6 & 6 & 6 & 6 & 6 & 6 \\
        & 2 & 12 & 13 & 14 & 14 & 14 & 14 & 15 & 15 & 16 & 16 & 7 & 7 & 7 & 7 & 7 & 7 & 7 & 7 & 7 & 7 \\
  \cmidrule{1-22}
  16384 & 0 & 13 & 13 & 14 & 14 & 13 & 13 & 13 & 14 & 14 & 13 & 9 & 9 & 9 & 9 & 9 & 9 & 9 & 9 & 9 & 9 \\
        & 1 & 9 & 10 & 11 & 12 & 12 & 11 & 12 & 13 & 13 & 13 & 5 & 6 & 6 & 6 & 6 & 6 & 6 & 6 & 6 & 6 \\
        & 2 & 12 & 13 & 13 & 14 & 14 & 14 & 15 & 15 & 16 & 16 & 7 & 7 & 7 & 7 & 7 & 7 & 7 & 7 & 7 & 7 \\
  \cmidrule{1-22}
  65536 & 0 & 12 & 13 & 15 & 15 & 14 & 12 & 13 & 15 & 15 & 14 & 8 & 9 & 9 & 9 & 9 & 8 & 9 & 9 & 9 & 9 \\
        & 1 & 9 & 10 & 11 & 11 & 12 & 11 & 12 & 12 & 13 & 13 & 5 & 6 & 6 & 6 & 6 & 6 & 6 & 6 & 6 & 6 \\
        & 2 & 12 & 12 & 13 & 14 & 14 & 13 & 14 & 15 & 16 & 16 & 6 & 7 & 7 & 7 & 7 & 6 & 7 & 7 & 7 & 7 \\
  \bottomrule
\end{tabular}
}
  \sbox\niterB{\begin{tabular}{cc!{\color{lightgray}\vrule width 0.4pt}ccccc!{\color{lightgray}\vrule width 0.4pt}ccccc!{\color{lightgray}\vrule width 0.4pt}ccccc!{\color{lightgray}\vrule width 0.4pt}ccccc}
  \toprule
  \multirow{3}{*}{$n_c$} & \multirow{3}{*}{$k$} &
  \multicolumn{10}{c}{FP} &
  \multicolumn{10}{c}{VP} \\
  \cmidrule(lr){3-12}\cmidrule(lr){13-22}
  & &
  \multicolumn{5}{c}{$I^{R}_{\ell-1}$} & \multicolumn{5}{c}{$I^{U}_{\ell-1}$} &
  \multicolumn{5}{c}{$I^{R}_{\ell-1}$} & \multicolumn{5}{c}{$I^{U}_{\ell-1}$} \\
  \cmidrule(lr){3-7}\cmidrule(lr){8-12}\cmidrule(lr){13-17}\cmidrule(lr){18-22}
  & &
  2 & 3 & 4 & 5 & 6 & 2 & 3 & 4 & 5 & 6 &
  2 & 3 & 4 & 5 & 6 & 2 & 3 & 4 & 5 & 6 \\
  \midrule
  32768  & 0 & 15 & 16 & 16 & 16 & 15 & 15 & 16 & 16 & 16 & 15 & 9 & 10 & 10 & 10 & 10 & 9 & 10 & 10 & 10 & 10 \\
         & 1 & 15 & 15 & 15 & 15 & 15 & 16 & 16 & 16 & 16 & 16 & 7 & 8 & 8 & 8 & 8 & 8 & 8 & 8 & 8 & 8 \\
         & 2 & 12 & 12 & 12 & 13 & 13 & 14 & 14 & 15 & 15 & 15 & 6 & 6 & 6 & 6 & 6 & 6 & 6 & 6 & 6 & 6 \\
  \cmidrule{1-22}
  131072 & 0 & 15 & 16 & 16 & 16 & 16 & 15 & 16 & 16 & 16 & 16 & 9 & 10 & 10 & 10 & 10 & 9 & 10 & 10 & 10 & 10 \\
         & 1 & 15 & 15 & 15 & 15 & 15 & 16 & 16 & 16 & 16 & 16 & 7 & 7 & 8 & 8 & 8 & 8 & 8 & 8 & 8 & 8 \\
         & 2 & 12 & 12 & 12 & 13 & 13 & 14 & 14 & 15 & 15 & 15 & 6 & 6 & 6 & 6 & 6 & 6 & 6 & 6 & 6 & 6 \\
  \cmidrule{1-22}
  524288 & 0 & 15 & 16 & 16 & 16 & 16 & 15 & 16 & 16 & 16 & 16 & 9 & 9 & 10 & 10 & 10 & 9 & 10 & 10 & 10 & 10 \\
         & 1 & 15 & 15 & 15 & 15 & 15 & 16 & 16 & 16 & 17 & 17 & 7 & 7 & 8 & 8 & 8 & 8 & 8 & 8 & 8 & 8 \\
         & 2 & 12 & 12 & 12 & 13 & 13 & 14 & 14 & 15 & 15 & 15 & 6 & 6 & 6 & 6 & 6 & 6 & 6 & 6 & 6 & 6 \\
  \bottomrule
\end{tabular}
}
  \sbox\niterC{\begin{tabular}{cc!{\color{lightgray}\vrule width 0.4pt}ccccc!{\color{lightgray}\vrule width 0.4pt}ccccc!{\color{lightgray}\vrule width 0.4pt}ccccc!{\color{lightgray}\vrule width 0.4pt}ccccc}
  \toprule
  \multirow{3}{*}{$n_c$} & \multirow{3}{*}{$k$} &
  \multicolumn{10}{c}{FP} &
  \multicolumn{10}{c}{VP} \\
  \cmidrule(lr){3-12}\cmidrule(lr){13-22}
  & &
  \multicolumn{5}{c}{$I^{R}_{\ell-1}$} & \multicolumn{5}{c}{$I^{U}_{\ell-1}$} &
  \multicolumn{5}{c}{$I^{R}_{\ell-1}$} & \multicolumn{5}{c}{$I^{U}_{\ell-1}$} \\
  \cmidrule(lr){3-7}\cmidrule(lr){8-12}\cmidrule(lr){13-17}\cmidrule(lr){18-22}
  & &
  2 & 3 & 4 & 5 & 6 & 2 & 3 & 4 & 5 & 6 &
  2 & 3 & 4 & 5 & 6 & 2 & 3 & 4 & 5 & 6 \\
  \midrule
  10201  & 0 & 27 & 28 & 26 & 24 & - & 29 & 31 & 30 & 26 & - & 19 & 19 & 18 & 17 & - & 19 & 20 & 19 & 19 & - \\
         & 1 & 28 & 26 & 26 & 26 & - & 26 & 27 & 26 & 24 & - & 17 & 17 & 17 & 17 & - & 17 & 17 & 17 & 17 & - \\
         & 2 & 29 & 31 & 30 & 30 & - & 28 & 30 & 30 & 28 & - & 15 & 15 & 16 & 16 & - & 15 & 15 & 15 & 15 & - \\
  \cmidrule{1-22}
  19881  & 0 & 28 & 28 & 27 & 25 & 25 & 31 & 32 & 31 & 27 & 26 & 19 & 19 & 18 & 18 & 18 & 21 & 21 & 20 & 19 & 19 \\
         & 1 & 28 & 27 & 27 & 27 & 27 & 27 & 28 & 28 & 25 & 24 & 17 & 17 & 17 & 18 & 18 & 18 & 17 & 17 & 17 & 17 \\
         & 2 & 30 & 32 & 31 & 31 & 31 & 29 & 30 & 29 & 28 & 27 & 15 & 16 & 16 & 16 & 16 & 15 & 15 & 15 & 16 & 16 \\
  \cmidrule{1-22}
  40401  & 0 & 19 & 19 & 20 & 19 & 18 & 20 & 21 & 22 & 20 & 19 & 14 & 13 & 13 & 13 & 13 & 14 & 14 & 15 & 14 & 14 \\
         & 1 & 19 & 18 & 19 & 19 & 19 & 19 & 19 & 20 & 18 & 18 & 11 & 11 & 11 & 11 & 11 & 12 & 12 & 12 & 12 & 12 \\
         & 2 & 20 & 21 & 22 & 22 & 22 & 19 & 20 & 21 & 21 & 20 & 10 & 10 & 10 & 10 & 10 & 10 & 10 & 10 & 10 & 10 \\
  \bottomrule
\end{tabular}
}
  \sbox\niterD{\begin{tabular}{cc!{\color{lightgray}\vrule width 0.4pt}ccc!{\color{lightgray}\vrule width 0.4pt}ccc!{\color{lightgray}\vrule width 0.4pt}ccc!{\color{lightgray}\vrule width 0.4pt}ccc!{\color{lightgray}\vrule width 0.4pt}ccc!{\color{lightgray}\vrule width 0.4pt}ccc}
  \toprule
  \multirow{3}{*}{$n_c$} & \multirow{3}{*}{$k$} &
  \multicolumn{6}{c}{FP} &
  \multicolumn{6}{c}{EP} &
  \multicolumn{6}{c}{VP} \\
  \cmidrule(lr){3-8}\cmidrule(lr){9-14}\cmidrule(lr){15-20}
  & &
  \multicolumn{3}{c}{$I^{R}_{\ell-1}$} & \multicolumn{3}{c}{$I^{U}_{\ell-1}$} &
  \multicolumn{3}{c}{$I^{R}_{\ell-1}$} & \multicolumn{3}{c}{$I^{U}_{\ell-1}$} &
  \multicolumn{3}{c}{$I^{R}_{\ell-1}$} & \multicolumn{3}{c}{$I^{U}_{\ell-1}$} \\
  \cmidrule(lr){3-5}\cmidrule(lr){6-8}\cmidrule(lr){9-11}\cmidrule(lr){12-14}\cmidrule(lr){15-17}\cmidrule(lr){18-20}
  & &
  2 & 3 & 4 & 2 & 3 & 4 &
  2 & 3 & 4 & 2 & 3 & 4 &
  2 & 3 & 4 & 2 & 3 & 4 \\
  \midrule
  4096   & 0 & 18 & 18 & 17 & 18 & 18 & 17 & 8 & 8 & 8 & 8 & 8 & 8 & 12 & 11 & 11 & 12 & 11 & 11 \\
         & 1 & 13 & 13 & 13 & 15 & 14 & 13 & 5 & 5 & 5 & 6 & 6 & 6 & 6 & 6 & 6 & 7 & 7 & 7 \\
         & 2 & 14 & 15 & 14 & 19 & 18 & 18 & 5 & 6 & 6 & 6 & 6 & 6 & 7 & 7 & 7 & 7 & 7 & 7 \\
  \cmidrule{1-20}
  32768  & 0 & 20 & 22 & 19 & 20 & 22 & 19 & 9 & 9 & 9 & 9 & 9 & 9 & 12 & 13 & 12 & 12 & 13 & 12 \\
         & 1 & 12 & 13 & 13 & 16 & 15 & 14 & 5 & 5 & 5 & 6 & 6 & 6 & 6 & 6 & 6 & 8 & 8 & 8 \\
         & 2 & 14 & 15 & 15 & 20 & 19 & 19 & 6 & 6 & 6 & 6 & 6 & 6 & 7 & 7 & 7 & 8 & 8 & 8 \\
  \cmidrule{1-20}
  262144 & 0 & 21 & 25 & 23 & 21 & 25 & 23 & 9 & 9 & 9 & 9 & 9 & 9 & 13 & 14 & 13 & 13 & 14 & 13 \\
         & 1 & 12 & 12 & 13 & 16 & 16 & 15 & 5 & 5 & 5 & 6 & 6 & 6 & 6 & 6 & 6 & 8 & 8 & 8 \\
         & 2 & 14 & 14  & 15  & 20 & 19 & 19  & 6 & 6  & 5  & 5 & 6 & 6  & - & -  & -  & - & -  & - \\
  \bottomrule
\end{tabular}
}
  \setlength{\nitermax}{\wd\niterA}
  \ifdim\wd\niterB>\nitermax \setlength{\nitermax}{\wd\niterB}\fi
  \ifdim\wd\niterC>\nitermax \setlength{\nitermax}{\wd\niterC}\fi
  \ifdim\wd\niterD>\nitermax \setlength{\nitermax}{\wd\niterD}\fi
  \setlength{\tabcolsep}{\dimexpr 2pt + (\nitermax - \wd\niterA)/44\relax}%
  \sbox\niterA{\begin{tabular}{cc!{\color{lightgray}\vrule width 0.4pt}ccccc!{\color{lightgray}\vrule width 0.4pt}ccccc!{\color{lightgray}\vrule width 0.4pt}ccccc!{\color{lightgray}\vrule width 0.4pt}ccccc}
  \toprule
  \multirow{3}{*}{$n_c$} & \multirow{3}{*}{$k$} &
  \multicolumn{10}{c}{FP} &
  \multicolumn{10}{c}{VP} \\
  \cmidrule(lr){3-12}\cmidrule(lr){13-22}
  & &
  \multicolumn{5}{c}{$I^{R}_{\ell-1}$} & \multicolumn{5}{c}{$I^{U}_{\ell-1}$} &
  \multicolumn{5}{c}{$I^{R}_{\ell-1}$} & \multicolumn{5}{c}{$I^{U}_{\ell-1}$} \\
  \cmidrule(lr){3-7}\cmidrule(lr){8-12}\cmidrule(lr){13-17}\cmidrule(lr){18-22}
  & &
  2 & 3 & 4 & 5 & 6 & 2 & 3 & 4 & 5 & 6 &
  2 & 3 & 4 & 5 & 6 & 2 & 3 & 4 & 5 & 6 \\
  \midrule
  4096  & 0 & 13 & 14 & 14 & 13 & 12 & 13 & 14 & 14 & 13 & 12 & 9 & 9 & 9 & 9 & 9 & 9 & 9 & 9 & 9 & 9 \\
        & 1 & 9 & 10 & 11 & 12 & 12 & 11 & 12 & 13 & 13 & 12 & 6 & 6 & 6 & 6 & 6 & 6 & 6 & 6 & 6 & 6 \\
        & 2 & 12 & 13 & 14 & 14 & 14 & 14 & 15 & 15 & 16 & 16 & 7 & 7 & 7 & 7 & 7 & 7 & 7 & 7 & 7 & 7 \\
  \cmidrule{1-22}
  16384 & 0 & 13 & 13 & 14 & 14 & 13 & 13 & 13 & 14 & 14 & 13 & 9 & 9 & 9 & 9 & 9 & 9 & 9 & 9 & 9 & 9 \\
        & 1 & 9 & 10 & 11 & 12 & 12 & 11 & 12 & 13 & 13 & 13 & 5 & 6 & 6 & 6 & 6 & 6 & 6 & 6 & 6 & 6 \\
        & 2 & 12 & 13 & 13 & 14 & 14 & 14 & 15 & 15 & 16 & 16 & 7 & 7 & 7 & 7 & 7 & 7 & 7 & 7 & 7 & 7 \\
  \cmidrule{1-22}
  65536 & 0 & 12 & 13 & 15 & 15 & 14 & 12 & 13 & 15 & 15 & 14 & 8 & 9 & 9 & 9 & 9 & 8 & 9 & 9 & 9 & 9 \\
        & 1 & 9 & 10 & 11 & 11 & 12 & 11 & 12 & 12 & 13 & 13 & 5 & 6 & 6 & 6 & 6 & 6 & 6 & 6 & 6 & 6 \\
        & 2 & 12 & 12 & 13 & 14 & 14 & 13 & 14 & 15 & 16 & 16 & 6 & 7 & 7 & 7 & 7 & 6 & 7 & 7 & 7 & 7 \\
  \bottomrule
\end{tabular}
}%

  \setlength{\tabcolsep}{\dimexpr 2pt + (\nitermax - \wd\niterB)/44\relax}%
  \sbox\niterB{\begin{tabular}{cc!{\color{lightgray}\vrule width 0.4pt}ccccc!{\color{lightgray}\vrule width 0.4pt}ccccc!{\color{lightgray}\vrule width 0.4pt}ccccc!{\color{lightgray}\vrule width 0.4pt}ccccc}
  \toprule
  \multirow{3}{*}{$n_c$} & \multirow{3}{*}{$k$} &
  \multicolumn{10}{c}{FP} &
  \multicolumn{10}{c}{VP} \\
  \cmidrule(lr){3-12}\cmidrule(lr){13-22}
  & &
  \multicolumn{5}{c}{$I^{R}_{\ell-1}$} & \multicolumn{5}{c}{$I^{U}_{\ell-1}$} &
  \multicolumn{5}{c}{$I^{R}_{\ell-1}$} & \multicolumn{5}{c}{$I^{U}_{\ell-1}$} \\
  \cmidrule(lr){3-7}\cmidrule(lr){8-12}\cmidrule(lr){13-17}\cmidrule(lr){18-22}
  & &
  2 & 3 & 4 & 5 & 6 & 2 & 3 & 4 & 5 & 6 &
  2 & 3 & 4 & 5 & 6 & 2 & 3 & 4 & 5 & 6 \\
  \midrule
  32768  & 0 & 15 & 16 & 16 & 16 & 15 & 15 & 16 & 16 & 16 & 15 & 9 & 10 & 10 & 10 & 10 & 9 & 10 & 10 & 10 & 10 \\
         & 1 & 15 & 15 & 15 & 15 & 15 & 16 & 16 & 16 & 16 & 16 & 7 & 8 & 8 & 8 & 8 & 8 & 8 & 8 & 8 & 8 \\
         & 2 & 12 & 12 & 12 & 13 & 13 & 14 & 14 & 15 & 15 & 15 & 6 & 6 & 6 & 6 & 6 & 6 & 6 & 6 & 6 & 6 \\
  \cmidrule{1-22}
  131072 & 0 & 15 & 16 & 16 & 16 & 16 & 15 & 16 & 16 & 16 & 16 & 9 & 10 & 10 & 10 & 10 & 9 & 10 & 10 & 10 & 10 \\
         & 1 & 15 & 15 & 15 & 15 & 15 & 16 & 16 & 16 & 16 & 16 & 7 & 7 & 8 & 8 & 8 & 8 & 8 & 8 & 8 & 8 \\
         & 2 & 12 & 12 & 12 & 13 & 13 & 14 & 14 & 15 & 15 & 15 & 6 & 6 & 6 & 6 & 6 & 6 & 6 & 6 & 6 & 6 \\
  \cmidrule{1-22}
  524288 & 0 & 15 & 16 & 16 & 16 & 16 & 15 & 16 & 16 & 16 & 16 & 9 & 9 & 10 & 10 & 10 & 9 & 10 & 10 & 10 & 10 \\
         & 1 & 15 & 15 & 15 & 15 & 15 & 16 & 16 & 16 & 17 & 17 & 7 & 7 & 8 & 8 & 8 & 8 & 8 & 8 & 8 & 8 \\
         & 2 & 12 & 12 & 12 & 13 & 13 & 14 & 14 & 15 & 15 & 15 & 6 & 6 & 6 & 6 & 6 & 6 & 6 & 6 & 6 & 6 \\
  \bottomrule
\end{tabular}
}%

  \setlength{\tabcolsep}{\dimexpr 2pt + (\nitermax - \wd\niterC)/44\relax}%
  \sbox\niterC{\begin{tabular}{cc!{\color{lightgray}\vrule width 0.4pt}ccccc!{\color{lightgray}\vrule width 0.4pt}ccccc!{\color{lightgray}\vrule width 0.4pt}ccccc!{\color{lightgray}\vrule width 0.4pt}ccccc}
  \toprule
  \multirow{3}{*}{$n_c$} & \multirow{3}{*}{$k$} &
  \multicolumn{10}{c}{FP} &
  \multicolumn{10}{c}{VP} \\
  \cmidrule(lr){3-12}\cmidrule(lr){13-22}
  & &
  \multicolumn{5}{c}{$I^{R}_{\ell-1}$} & \multicolumn{5}{c}{$I^{U}_{\ell-1}$} &
  \multicolumn{5}{c}{$I^{R}_{\ell-1}$} & \multicolumn{5}{c}{$I^{U}_{\ell-1}$} \\
  \cmidrule(lr){3-7}\cmidrule(lr){8-12}\cmidrule(lr){13-17}\cmidrule(lr){18-22}
  & &
  2 & 3 & 4 & 5 & 6 & 2 & 3 & 4 & 5 & 6 &
  2 & 3 & 4 & 5 & 6 & 2 & 3 & 4 & 5 & 6 \\
  \midrule
  10201  & 0 & 27 & 28 & 26 & 24 & - & 29 & 31 & 30 & 26 & - & 19 & 19 & 18 & 17 & - & 19 & 20 & 19 & 19 & - \\
         & 1 & 28 & 26 & 26 & 26 & - & 26 & 27 & 26 & 24 & - & 17 & 17 & 17 & 17 & - & 17 & 17 & 17 & 17 & - \\
         & 2 & 29 & 31 & 30 & 30 & - & 28 & 30 & 30 & 28 & - & 15 & 15 & 16 & 16 & - & 15 & 15 & 15 & 15 & - \\
  \cmidrule{1-22}
  19881  & 0 & 28 & 28 & 27 & 25 & 25 & 31 & 32 & 31 & 27 & 26 & 19 & 19 & 18 & 18 & 18 & 21 & 21 & 20 & 19 & 19 \\
         & 1 & 28 & 27 & 27 & 27 & 27 & 27 & 28 & 28 & 25 & 24 & 17 & 17 & 17 & 18 & 18 & 18 & 17 & 17 & 17 & 17 \\
         & 2 & 30 & 32 & 31 & 31 & 31 & 29 & 30 & 29 & 28 & 27 & 15 & 16 & 16 & 16 & 16 & 15 & 15 & 15 & 16 & 16 \\
  \cmidrule{1-22}
  40401  & 0 & 19 & 19 & 20 & 19 & 18 & 20 & 21 & 22 & 20 & 19 & 14 & 13 & 13 & 13 & 13 & 14 & 14 & 15 & 14 & 14 \\
         & 1 & 19 & 18 & 19 & 19 & 19 & 19 & 19 & 20 & 18 & 18 & 11 & 11 & 11 & 11 & 11 & 12 & 12 & 12 & 12 & 12 \\
         & 2 & 20 & 21 & 22 & 22 & 22 & 19 & 20 & 21 & 21 & 20 & 10 & 10 & 10 & 10 & 10 & 10 & 10 & 10 & 10 & 10 \\
  \bottomrule
\end{tabular}
}%

  \setlength{\tabcolsep}{\dimexpr 2pt + (\nitermax - \wd\niterD)/40\relax}%
  \sbox\niterD{\begin{tabular}{cc!{\color{lightgray}\vrule width 0.4pt}ccc!{\color{lightgray}\vrule width 0.4pt}ccc!{\color{lightgray}\vrule width 0.4pt}ccc!{\color{lightgray}\vrule width 0.4pt}ccc!{\color{lightgray}\vrule width 0.4pt}ccc!{\color{lightgray}\vrule width 0.4pt}ccc}
  \toprule
  \multirow{3}{*}{$n_c$} & \multirow{3}{*}{$k$} &
  \multicolumn{6}{c}{FP} &
  \multicolumn{6}{c}{EP} &
  \multicolumn{6}{c}{VP} \\
  \cmidrule(lr){3-8}\cmidrule(lr){9-14}\cmidrule(lr){15-20}
  & &
  \multicolumn{3}{c}{$I^{R}_{\ell-1}$} & \multicolumn{3}{c}{$I^{U}_{\ell-1}$} &
  \multicolumn{3}{c}{$I^{R}_{\ell-1}$} & \multicolumn{3}{c}{$I^{U}_{\ell-1}$} &
  \multicolumn{3}{c}{$I^{R}_{\ell-1}$} & \multicolumn{3}{c}{$I^{U}_{\ell-1}$} \\
  \cmidrule(lr){3-5}\cmidrule(lr){6-8}\cmidrule(lr){9-11}\cmidrule(lr){12-14}\cmidrule(lr){15-17}\cmidrule(lr){18-20}
  & &
  2 & 3 & 4 & 2 & 3 & 4 &
  2 & 3 & 4 & 2 & 3 & 4 &
  2 & 3 & 4 & 2 & 3 & 4 \\
  \midrule
  4096   & 0 & 18 & 18 & 17 & 18 & 18 & 17 & 8 & 8 & 8 & 8 & 8 & 8 & 12 & 11 & 11 & 12 & 11 & 11 \\
         & 1 & 13 & 13 & 13 & 15 & 14 & 13 & 5 & 5 & 5 & 6 & 6 & 6 & 6 & 6 & 6 & 7 & 7 & 7 \\
         & 2 & 14 & 15 & 14 & 19 & 18 & 18 & 5 & 6 & 6 & 6 & 6 & 6 & 7 & 7 & 7 & 7 & 7 & 7 \\
  \cmidrule{1-20}
  32768  & 0 & 20 & 22 & 19 & 20 & 22 & 19 & 9 & 9 & 9 & 9 & 9 & 9 & 12 & 13 & 12 & 12 & 13 & 12 \\
         & 1 & 12 & 13 & 13 & 16 & 15 & 14 & 5 & 5 & 5 & 6 & 6 & 6 & 6 & 6 & 6 & 8 & 8 & 8 \\
         & 2 & 14 & 15 & 15 & 20 & 19 & 19 & 6 & 6 & 6 & 6 & 6 & 6 & 7 & 7 & 7 & 8 & 8 & 8 \\
  \cmidrule{1-20}
  262144 & 0 & 21 & 25 & 23 & 21 & 25 & 23 & 9 & 9 & 9 & 9 & 9 & 9 & 13 & 14 & 13 & 13 & 14 & 13 \\
         & 1 & 12 & 12 & 13 & 16 & 16 & 15 & 5 & 5 & 5 & 6 & 6 & 6 & 6 & 6 & 6 & 8 & 8 & 8 \\
         & 2 & 14 & 14  & 15  & 20 & 19 & 19  & 6 & 6  & 5  & 5 & 6 & 6  & - & -  & -  & - & -  & - \\
  \bottomrule
\end{tabular}
}%

  \newcommand{\niterbox}[1]{\resizebox{\linewidth}{!}{#1}}

  \caption{Number of preconditioned FGMRES iterations.}
  \label{tab:niter}

  \begin{subtable}[t]{0.5\linewidth}
    \centering
    \vspace{0pt}
    \niterbox{\usebox\niterA}
    \subcaption{2D Cartesian meshes.}
    \label{tab:niter_d_2_ahho}
  \end{subtable}%
  \begin{subtable}[t]{0.5\linewidth}
    \centering
    \vspace{0pt}
    \niterbox{\usebox\niterB}
    \subcaption{2D Rep-tile meshes.}
    \label{tab:niter_d_2_polyahho}
  \end{subtable}

  \vspace{0.4em}

  \begin{subtable}[t]{0.5\linewidth}
    \centering
    \vspace{0pt}
    \niterbox{\usebox\niterC}
    \subcaption{2D agglomerated Voronoi meshes.}
    \label{tab:niter_d_2_aggmesh}
  \end{subtable}%
  \begin{subtable}[t]{0.5\linewidth}
    \centering
    \vspace{0pt}
    \niterbox{\usebox\niterD}
    \subcaption{3D Cartesian meshes.}
    \label{tab:niter_d_3_ahho}
  \end{subtable}
\end{table}

\section{Conclusion}

We have constructed and analysed a fully hybrid \ac{gmg} solver for \ac{hho} discretisations on arbitrary polytopal agglomeration hierarchies in 2D and 3D. Theorem~\ref{th:convergence} gives uniform contraction of the V-cycle and a uniform condition number for the variable V-cycle, for two prolongation operators and for the star-patch smoothers, with bounds independent of the mesh size and of the number of levels. The two ingredients that make this possible are an enriched \ac{hho} face space, which handles the curved interfaces produced by agglomeration while reducing the number of \acp{dof}, and a skeletal inner product that does not tie the levels to a common bulk space. This removes the restriction present in earlier \ac{gmg} approaches for hybrid methods, which were mostly confined to nested hierarchies with planar faces at all levels.

The numerical experiments confirm the theory and, on the arbitrary agglomerations of \sect{sec:results}, remain robust well outside the mesh assumptions under which it is proved. Extending the framework to \ac{hdg} and Weak Galerkin discretisations, and to other problems such as incompressible flows, is future work.

\section*{Acknowledgments}

The authors would like to thank Dr. Jai Tushar and Prof. Silvia Bertoluzza for insightful discussions and valuable feedback. 
This research was partially funded by the Australian Government through the Australian Research Council (project numbers DP210103092 and DP220103160). This work was also supported by computational resources provided by the Australian Government through NCI and Pawsey under the NCMAS Merit Allocation Schemes. 

\printbibliography

\end{document}